\newtheorem{theorem}{Theorem}[section]
\newtheorem{lemma}[theorem]{Lemma}
\newtheorem{proposition}[theorem]{Proposition}
\newtheorem{remark}[theorem]{Remark}
\newtheorem{definition}{Definition}[section]
\newcommand{\ten}[1]{\mathcal{{#1}}}
\newcommand{\tenhat}[1]{\widehat{\mathcal{{#1}}}}
\newcommand{\R}{\mathbb{R}}
\newcommand{\Z}{\mathbb{Z}}
\newcommand{\PP}{\mathbb{P}}
\newcommand{\E}{\mathbb{E}}
\newcommand{\<}{\langle}
\renewcommand{\>}{\rangle}
\newcommand{\vct}[1]{\boldsymbol{#1}}
\newcommand{\mtx}[1]{\boldsymbol{#1}}
\newcommand{\tub}[1]{\mathring{\vct{#1}}}
\newcommand{\tc}[1]{\vec{\vct{#1}}}
\newcommand{\rank}{\operatorname{rank}}
\newcommand{\tr}{\operatorname{Tr}}
\def\Eabc{\mathcal{E}_{abc}}
\def\Eabck{\mathcal{E}_{a_kb_kc_k}}
\def\Zabc{\mathcal{Z}_{abc}}
\def\Zabck{\mathcal{Z}_{a_kb_kc_k}}
\def\be{\bm{e}}
\def\Z{\mathcal{Z}}
\DeclareMathOperator{\Pj}{\mathrm{P}}
\DeclareMathOperator{\I}{\mathrm{I}}
\DeclareMathOperator{\RR}{\mathrm{R}}
\DeclareMathOperator{\TT}{\mathrm{T}}
\newcommand{\PT}{P_{\ten{T}}}
\newcommand{\PO}{R_{\Omega}}
\newcommand{\OpId}{\mathcal{I}}
\newcommand{\dct}{ \mbox{\tt dct} }
\newcommand{\idct}{ \mbox{\tt idct} }
\newcommand{\vv}{ \mbox{\tt vec} }
\newcommand{\bdiag}{\mbox{\tt blockdiag}}
\def\lcb{\left\{}
\def\rcb{\right\}}
\title{Riemannian Conjugate Gradient Descent Method for Third-Order Tensor Completion}
\author{
Guang-Jing Song\thanks{School of Mathematics and Information Sciences,
Weifang University, Weifang 261061, P.R. China (e-mail: sgjshu@163.com).}
\and
Xue-Zhong Wang\thanks{School of Mathematics and Statistics, Hexi University, P.R. China
(e-mail: xuezhongwang77@126.com).}
\and
Michael K. Ng\thanks{The Corresponding Author. Department of Mathematics, The University of
Hong Kong (e-mail: mng@maths.hku.hk).
Research supported in part by HKRGC GRF 12306616, 12200317, 12300218 and 12300519.}}
\date{}
\begin{document}

\maketitle

\noindent
\textbf{Abstract:} The goal of tensor completion is to
fill in missing entries of a partially known tensor under a low-rank
constraint. In this paper, we mainly study low rank third-order tensor
completion problems by using Riemannian optimization methods on the smooth
manifold. Here the tensor rank is defined to be a set of matrix ranks
where the matrices are the slices of the transformed tensor
obtained by applying the Fourier-related transformation onto the tubes of the original
tensor. We show that with suitable incoherence conditions on the underlying low rank tensor,
the proposed Riemannian optimization method is guaranteed to converge and find
such low rank tensor with a high probability. In addition, numbers of sample entries required for
solving low rank tensor completion problem under different initialized methods are studied and derived.
Numerical examples for both synthetic and image data sets
are reported to demonstrate the proposed method is able to recover low rank tensors.

\vspace{2mm} \noindent \textbf{Keywords:} tensor completion, manifold, tangent spaces, conjugate gradient
descent method

\vspace{2mm} \noindent \textbf{AMS Subject Classifications 2010:}
15A69, 15A83, 90C25

\section{Introduction}

This paper addresses the problem of low rank tensor completion
when the rank is a priori known or estimated. Let
$\mathcal{A}\in \R^{n_{1}\times n_{2}\times n_{3}}$ be a third-order tensor that is only known on a
subset $\Omega$ of the complete set of entries. The low rank tensor
completion problem consists of finding the tensor with the lowest rank
that agrees with $\mathcal{A}$ on $\Omega$:
\begin{equation} \label{n1}
\min\limits_{\mathcal{Z}} ~~ \rank(\mathcal{Z}) ~~ \text{s.t.} ~~ \mathcal{Z} \in \R^{n_{1} \times n_{2}\times
n_{3}}, ~~ P_{\Omega}(\mathcal{Z})=P_{\Omega}(\mathcal{A}),
\end{equation}
where
\begin{align*}
P_{\Omega}:\R^{n_{1}\times n_{2}\times n_{3}}\rightarrow
\R^{n_{1}\times n_{2}\times n_{3}},~\mathcal{Z}_{ijk} \mapsto
\left\{\begin{array}{lll}
\mathcal{Z}_{ijk}, & \mbox{if} \ (i,j,k)\in \Omega,\\
0, & \mbox{if} \ (i,j,k)\notin \Omega,\\
\end{array}\right.
\end{align*}
denotes the projection onto $\Omega$. In particular, if $n_{3}=1$
the tensor completion problem (\ref{n1}) reduces to the well
known matrix completion problem, see for instance
\cite{candes2010matrix,candes2009exact,candes2010the,gross2010,recht2010guaranteed}
and references therein.

Tensor has many kinds of rank definitions which lead to different mathematical models for studying the low rank tensor completion problem.
In order to well understand the model \eqref{n1}, we first summarize some most popular definitions of tensor rank.  Let $\ten{X}\in \mathbb{R}^{n_1\times\cdots\times n_d}.$
The CANDECOMP/PARAFAC (CP) rank of a tensor $\ten{X}$ is defined as the minimal number
of summations of rank-one tensors  that generate $\ten{X}$. CP rank is an exact analogue to the matrix rank, however, its properties are quite different.
For example, calculating the tensor (CP) rank has been  demonstrated to be NP-hard \cite{Kolda2009,Hastad1990}.
The  Tucker rank \cite{Tucker66} (or multilinear rank) of $\ten{X}$ is defined as
$\rank_{n}(\ten{X})=(\rank(X_{1}),\rank(X_{2}),...,\rank(X_{d})),$ where $X_{{i}}$  is derived by unfolding  $\ten{X}$ along its $i$-th mode into a matrix with size $n_{i}\times \Pi_{k\neq i}n_{k}$.
Although multilinear rank  is perhaps the most widely adopted rank assumption in the existing tensor completion literature, a crucial drawback is pointed out that it only takes into consideration the ranks of matrices that are constructed based on the unbalanced matrixization scheme \cite{phien}. The Tensor Train (TT)
rank \cite{oseledets2011tensor} of  $\ten{X}$ is defined as a vector  $\textbf{r}=[r_1,...,r_{d-1}]$ with $r_{j}=\rank(X^{<j>}), $ where $X^{<j>}$ is the $j$-th unfolding of $\ten{X}$ as a matrix with size $(n_1\cdots n_j)\times (n_{j+1}\cdots n_d).$ TT-rank is generated by the TT decomposition using the link structure of each core tensor. Since the link structure, the TT rank is only  efficient for higher order tensor completion problem.

Optimization technique on Riemannian manifold has gained increasing popularity in recent years. Meanwhile,
some classical optimization algorithms that worked well in the Euclidean
space have been extended to the smooth manifolds. For instance, gradients
descent method, conjugate gradients method and Riemannian
trust-region method can be considered, see \cite{Absil,edelman,Kressner,bart,wei,wei2}  and the references therein. For the low rank tensor completion problem, the Riemannian manifold theory can provide us an alternative way to consider the rank constraint condition.
Note that the fixed tensor set can form a smooth manifold, then the model given in (\ref{n1}) can be equivalent rewritten as
\begin{equation} \label{n4}
\text{min} ~~
\frac{1}{2}\|P_{\Omega}(\mathcal{Z})-P_{\Omega}(\mathcal{A})\|_{F}^2 ~~ \text{s.t.} ~~
\mathcal{Z} \in \mathcal{M}_{\textbf{r}}:=\{\mathcal{Z}\in \R^{n_{1}\times
n_{2}\times n_{3}}~|~\rank{(\mathcal{Z})}=\textbf{r}\},
\end{equation}
with $P_{\Omega}$ is a projection to the sampling set $\Omega$, where $\Omega$ ($|\Omega|=m$) is a set of indices sampled
independently and uniformly without replacement. When the rank in \eqref{n4} is chosen as the Tucker rank, Kressner et al. \cite{Kressner} showed the
fixed Tucker rank  tensor set form a smooth embedded submanifold of $\R^{n_{1}\times n_{2}\times n_{3}},$ and  proposed a number of basic tools from differential geometry for tensors of low Tucker rank.
Moreover, they  studied the low Tucker rank tensor completion problem by
Riemannian CG method.  With the manifold framework listed in \cite{Kressner},  Heidel and Schulz \cite{heidel} considered the low Tucker rank tensor completion problem by Riemannian trust-region methods. Some other results can be found in \cite{Kasai,nimishakavi}. Besides the fixed Tucker rank tensor manifold, other choices of the smooth manifold are also used such as hierarchical Tucker format \cite{uschmajew} and fixed TT rank manifold \cite{sebastian2010} for handling high dimensional applications.  Unfortunately, their results are not sufficient
to derive the bounds on the number of sample entries required to recover low rank tensors.

Recently, Kilmer et al. \cite{kilmer,Kilmer2011} proposed the tubal rank of a
third-order tensor, which is based on tensor-tensor product
(t-product) and its algebra framework, where the t-product allows
tensor factorizations like matrix cases. Jiang et al. \cite{jiangm} showed that one can recover a low tubal-rank tensor
exactly with overwhelming probability by simply solving a convex program.
Other results can be found in \cite{Zhang2017,zhang}
and the references therein. These approaches have been shown to yield
good recovery results when applied to the tensors from various fields
such as medical imaging, hyperspectral images and seismic data.

\subsection{The Contribution}
In this paper, we mainly consider the low rank tensor completion
problem by Riemannian optimization methods on the manifold of low transformed rank tensor.  Mathematically, it can be expressed as
\begin{equation}\label{main1}
\text{min}~~f(\mathcal{Z})=\frac{1}{2}\|R_{\Omega}(\mathcal{Z})-R_{\Omega}(\mathcal{A})\|_{F}^{2} ~~ \text{s.t.} ~~
 \rank_{t}(\mathcal{Z})=\textbf{r}.
\end{equation}
Here, $\rank_{t}(\mathcal{Z})$ is the transformed multi-rank of $\mathcal{Z}$ given in
\cite{Kilmer2011} and $R_{\Omega}$  denotes  the sampling
operator
\begin{equation}\label{s1}
R_{\Omega}(\mathcal{Z})=\sum_{i,j,k=1}^{|\Omega|}\langle \mathcal{E}_{ijk},\mathcal{Z}\rangle\mathcal{E}_{ijk},
\end{equation}
with $\mathcal{E}_{ijk}$ being a tensor whose
$(i,j,k)$ position is $1$ and zeros everywhere else.   The $(i,j,k)$-th
component of $R_{\Omega}(\mathcal{Z})$ is zero unless $(i,j,k)\in
\Omega.$ Here, we consider the sampling with replacement model instead of without
replacement model. Then, for $(i,j,k)\in \Omega$, $R_{\Omega}(\mathcal{Z})$ is equal
to $\mathcal{Z}_{ijk}$ times the multiplicity of $(i,j,k)\in \Omega$.
 Sampling with replacement model can be viewed as a proxy for the uniform sampling model
and the failure probability under the uniform sampling model is less
than or equal to the failure probability under the sampling with
replacement model \cite{yuan}.

The main contribution of this paper can be expressed as follows. We first establish the set of fixed  transformed multi-rank tensors forms a   Riemannian manifold, which is different from the well known fixed Tucker rank tensor manifold \cite{Kressner} and fixed tensor train rank manifold \cite{sebastian2010}.    And then, we show that with suitable incoherence conditions
the proposed Riemannian optimization method on the fixed  transformed multi-rank tensor is guaranteed to converge to
the underlying low rank tensor with a high probability. Moreover, numbers of sample entries required for
solving low rank tensor completion problem under different initialized methods are studied and derived.

The outline of this paper is given as follows. In Section 2, we summarize the
notations used through out this paper. The preliminaries of
tensor singular value decomposition theory as well as the
differential geometric properties of transformed multi-rank tensor manifold are
also presented. In Section 3, the tensor conjugate gradient descent algorithms
based on the  fixed multi-rank tensor manifold is presented and analysed.
In Section 4, we provide the bounds on the number of sample entries required for tensor completion under
different initialization methods. In Section 5, we present several synthetic data and imaging data sets to demonstrate
the performance of the proposed algorithms. Finally, some concluding remarks are given in Section 6.

\section{Preliminaries}
In this section, some notations and notions relate to tensors and manifolds used throughout this paper are reviewed. We also refer the reader to \cite{kernfel,lee2013smooth} and references therein for more details about tensors and manifolds, respectively.
\subsection{Tensors}

Throughout this paper, tensors
are denoted by boldface Euler letters and matrices by boldface
capital letters. Vectors are represented by boldface lowercase
letters and scalars by lowercase letters. The field of real number
is denoted as $\R$. For a
third-order tensor $\mathcal{A} \in \R^{n_1 \times n_2 \times n_3}$,
we denote its $(i, j, k)$-th entry as $\mathcal{A}_{ijk}$ and use
the \textsc{Matlab} notation $\mathcal{A}(i,:,:)$,
$\mathcal{A}(:,i,:)$ and $\mathcal{A}(:,:,i)$ to denote the $i$-th
horizontal, lateral and frontal slice, respectively. Specifically,
the front slice $\mathcal{A}(:,:,i)$ is denoted compactly as
$\mathcal{A}^{(i)}$ and the  horizontal slice $\mathcal{A}(:,i,:)$ is
denoted as $\mathcal{A}^{[i]}$. $\mathcal{A}(i, j, :)$ denotes a
tubal fiber oriented into the board obtained by fixing the first two
indices and varying the third. Moreover, a tensor tube of size $1
\times 1 \times n_3$ is denoted as $\tub{a}$ and a tensor column of
size $n_1 \times 1 \times n_3$ is denoted as $\tc{a}$.

The inner product of $\mtx{A}$ and $\mtx{B}$ in $\R^{n_1 \times
n_2}$ is given by $\langle\mtx{A}, \mtx{B}\rangle =
\tr(\mtx{A}^{T}\mtx{B})$, where $\mtx{A}^{T}$ denotes the
transpose of $\mtx{A}$ and $\tr(\cdot)$ denotes the matrix trace.
The inner product of $\mathcal{A}$ and $\mathcal{B}$ in $\R^{n_1
\times n_2 \times n_3}$ is defined as
\begin{align}\label{e8}
\langle\mathcal{A}, \mathcal{B}\rangle = \sum_{i=1}^{n_3}\langle
\mathcal{A}^{(i)}, \mathcal{B}^{(i)}\rangle.
\end{align} For a
tensor $\mathcal{A}$, we denote  the infinity norm as
$\|\mathcal{A}\|_{\infty} = \max_{ijk}|\mathcal{A}_{ijk}|$  and the
Frobenius norm as $\|\mathcal{A}\|_F =
\sqrt{\sum_{ijk}|\mathcal{A}_{ijk}|^2}$.

\subsection{t$_{c}$-SVD}

Based on applying the Fast Fourier Transform (FFT) along all the tubes of a tensor, Kilmer et al. \cite{kilmer,Kilmer2011} introduced the tensor-tensor product (tt-product) and tensor singular value decomposition (t-svd) theory, respectively. Later, Kernfeld et al. \cite{kernfel} shown that the tt-product and t-svd can be implemented by using a discrete cosine transform, and the corresponding algebraic framework can also be derived.
In signal processing, many real data sets satisfy reflexive Boundary Conditions rather than  periodic boundary conditions, then better results can be derived by using  Discrete Cosine Transform (DCT) instead of FFT. Moreover, DCT only produces real number for real input in the transform domain which is important in the Riemannian manifold structure analysis. For these reasons, in this paper, we mainly consider the tensor singular value decomposition theory based on the DCT.

For a
third-order tensor $\mathcal{A} \in \R^{n_1 \times n_2 \times n_3}$, $\widehat{\mathcal{A}}$ represents the tensor obtained by
taking the DCT of all the tubes along
the third dimension of $\mathcal{A}$, i.e.,
\begin{equation*}
\vv(\widehat{\mathcal{A}}(i, j, :)) = \dct(\vv(\mathcal{A}(i,
j, :))),
\end{equation*}
where $\vv$ is the vectorization operator that maps the tensor tube
to a vector, and $\dct$ stands for the DCT. For compactness,
we will denote  $\widehat{\mathcal{A}} = \dct(\mathcal{A}, [], 3)$. In
the same fashion, one can also compute $\mathcal{A}$ from
$\widehat{\mathcal{A}}$ via $\idct(\widehat{\mathcal{A}}, [], 3)$
using the inverse DCT operation along the third-dimension. For sake
of brevity, we direct the interested readers to \cite{kilmer,Kilmer2011}.

\begin{definition}[Block diagonal form of third-order tensor \cite{Kilmer2011}]
Let $\overline{\mathcal{A}}$  be the block diagonal matrix of the
tensor $\mathcal{A} \in \R^{n_1 \times n_2 \times n_3}$ in the
transform domain, namely,
\begin{equation*}
\overline{\mathcal{A}}  = \bdiag(\widehat{\mathcal{A}}) = \left[
\begin{array}{llll}
\widehat{\mathcal{A}}^{(1)} & & & \\
 & \widehat{\mathcal{A}}^{(2)} & & \\
 & & \ddots &\\
 & & & \widehat{\mathcal{A}}^{(n_3)}
\end{array}
\right] \in \R^{n_1 n_3 \times n_2 n_3}.
\end{equation*}
\label{def3}
\end{definition}

In addition, the block diagonal matrix can be converted into a tensor by the `fold' operator:
$\mbox{\tt fold} \left(\bdiag(\widehat{\mathcal{A}})\right)= \widehat{\mathcal{A}}$.
The following fact will be used through out the paper. For any
tensor $\mathcal{A} \in \R^{n_1 \times n_2 \times n_3}$ and
$\mathcal{B} \in \R^{n_1 \times n_2 \times n_3}$, the inner product of two tensors satisfies
$\langle\mathcal{A}, \mathcal{B}\rangle =\langle{\overline{\mathcal{A}}}, {\overline{\mathcal{B}}}\rangle
\in \R$. After introducing the tensor notation and terminology, we give the
basic definitions on t$_{c}$-product and outline the associated algebraic
framework which
serves as the foundation for our analysis in next section.

\begin{definition}[\cite{kernfel}]\label{def1}
The $t_{c}$-product $\mathcal{A} \ast \mathcal{B}$ of $\mathcal{A} \in
\R^{n_1 \times n_2 \times n_3}$ and $\mathcal{B} \in \R^{n_2 \times
n_4 \times n_3}$ is a tensor $\mathcal{C} \in \R^{n_1 \times n_4
\times n_3}$ which is given by
\begin{equation*}
\mathcal{C} =\mathcal{A} \ast \mathcal{B}=\idct\left[ \mbox{\tt fold} \left(
 \mbox{\tt blockdiag}(\mathcal{\widehat{A}}) \times
\mbox{\tt blockdiag}(\mathcal{\widehat{B}})\right) \right ],
\end{equation*}
where $``\times"$ denotes the usual matrix product.
\end{definition}

Note that a third-order tensor of size $n_1 \times n_2 \times n_3$
can be regarded as an $n_1 \times n_2$ matrix with each entry as a
tube lies in the third dimension. This new perspective has
endowed multidimensional data arrays with an advantageous
representation in real-world applications. Similar as Definition \ref{def1}, it is convenient to
rewrite Definition 4.4 in \cite{kernfel} as follows.

\begin{definition}[\cite{kernfel}] 
\label{def2} The transpose of $\mathcal{A}\in
\R^{n_{1}\times n_{2}\times n_{3}}$ with respect to ${\bf \dct}$ is
the tensor $\mathcal{A}^{T}\in \R^{n_{2}\times n_{1}\times n_{3}}$
obtained by
\begin{equation*}
\mathcal{A}^{T} = \idct \left [ \mbox{\tt fold} \left
(\mbox{\tt blockdiag}(\mathcal{\widehat{A}})^{T}\right
)\right].
\end{equation*}
\end{definition}

Next we would like to introduce the identity tensor with respect to
DCT, which is also given in \cite{kernfel}. We construct a
tensor ${\cal I} \in \R^{n \times n \times n_3}$ with each frontal
slice $\mathcal{I}^{(i)}$ $(i = 1, \cdots, n_3)$ being an $n\times n$ identity matrix.

\begin{definition} \cite[Proposition 4.1]{kernfel}\label{idd}
The identity tensor $\mathcal{I}_{dct} \in \R^{n \times n
\times n_3}$ (with respect to DCT) is defined to be a
tensor such that ${\cal I}_{dct} = \idct  [ {\cal I}]$.
\end{definition}

Note that
$\mathcal{A}$ is a diagonal tensor if and only if each frontal slice $\mathcal{A}^{(i)}$ is a diagonal matrix.
The aforementioned notions allow us
to propose the following tensor singular value decomposition theory
(t$_{c}$-SVD).
\begin{definition}[\cite{kernfel}] \label{def1.4}
For $\mathcal{A} \in \R^{n_1 \times n_2 \times n_3}$, the t$_{c}$-SVD of
$\mathcal{A}$ is given as
\begin{equation*}
\mathcal{A} = \mathcal{U} \ast \mathcal{S} \ast \mathcal{V}^{T},
\label{eq8}
\end{equation*}
where $\mathcal{U} \in \R^{n_1 \times n_1 \times n_3}$ and
$\mathcal{V} \in \R^{n_2 \times n_2 \times n_3}$ are orthogonal
tensors, and $\mathcal{S} \in \R^{n_1 \times n_2 \times n_3}$ is a
diagonal tensor, respectively.
\end{definition}
Then $\mathcal{U}$, $\mathcal{V}$ and $\mathcal{S}$ in the
t$_{c}$-SVD can be computed by SVDs of $\widehat{\mathcal{A}}^{(i)}$, which is summarized in Algorithm 1.

\begin{algorithm}[h]
\caption{t$_{c}$-SVD for third-order tensors \cite{kernfel}} \label{algj1}
\textbf{Input:} $\mathcal{A}\in \R^{n_1 \times n_2 \times n_3}$. \\
~~1: $\mathcal{\hat{A}}=\dct[\mathcal{A}]$;\\
~~2: \textbf{for} $i=1,...,n_{3}$ \textbf{do}\\
~~3: $[\textbf{U},\textbf{S},\textbf{V}]=SVD(\mathcal{\hat{A}}^{(i)})$;\\
~~4: $\mathcal{\hat{U}}^{(i)}=\textbf{U},~\mathcal{\hat{S}}^{(i)}=\textbf{S},~\mathcal{\hat{V}}^{(i)}=\textbf{V};$\\
~~5:  \textbf{end for}\\
~~6:
 $\mathcal{U}=\idct[\mathcal{\hat{U}}]$, $\mathcal{S}=\idct[\mathcal{\hat{S}}]$,
 $\mathcal{V}=\idct(\mathcal{\hat{V}}]$. \\
\textbf{Output:}  $\mathcal{U}\in \R^{n_1 \times n_1 \times
n_3},~\mathcal{S}\in \R^{n_1 \times n_2 \times n_3},~\mathcal{V}\in
\R^{n_2 \times n_2 \times n_3}.$
\end{algorithm}

\subsection{Tensor Transformed Multi-rank}

Based on t$_{c}$-SVD given in Definition \ref{def1.4}, one can get the definitions of the transformed multi-rank and tubal
rank, respectively.

\begin{definition}[\cite{Zhang2017}]
The transformed multi-rank of a tensor $\mathcal{A} \in
\R^{n_1 \times n_2 \times n_3}$ denoted as
$\rank_{t}(\mathcal{A})$, is a vector $\vct{r} \in \R^{n_3}$
with its $i$-th entry as the rank of the $i$-th frontal slice of
$\widehat{\mathcal{A}}$, i.e.,
\begin{equation*}
\rank_{t}(\mathcal{A})=\textbf{r},~~r_i = \rank(\widehat{\mathcal{A}}^{(i)}),~~i=1,...,n_{3}.
\end{equation*}
The tubal rank of a tensor, denoted as
$\rank_{ct}(\mathcal{A})$, is defined as the number of nonzero
singular tubes of $\mathcal{S}$, where $\mathcal{S}$ comes from the
t$_{c}$-SVD of $\mathcal{A}=\mathcal{U} \ast
\mathcal{S}\ast\mathcal{V}^{T}$, i.e.,
\begin{equation*}
\rank_{ct}(\mathcal{A}) = \#\{i: \mathcal{S}(i, i, :) \neq \vct{0}\}
= \max_{i} r_i.
\end{equation*}
\end{definition}

For computational improvement, we will use the skinny t$_{c}$-SVD throughout the paper unless otherwise stated.

\begin{remark}
For $\mathcal{A}\in \R^{n_{1}\times n_{2}\times n_{3}}$ with $\emph{rank}_{t}(\mathcal{A})=(r_{1},...,r_{n_{3}})=\textbf{r}$ and $\emph{rank}_{ct}(\mathcal{A})=r,$  denote $\mathcal{I}_{\textbf{r}}= \idct \left [ \mathcal{I}_{d}\right],$ where each frontal
slice of $\mathcal{I}_{d}$ obeying $ \mathcal{I}_{d}^{(i)}= \left(
                                                                                       \begin{array}{cc}
                                                                                         I_{r_{i}} & 0 \\
                                                                                         0 & 0 \\
                                                                                       \end{array}
                                                                                     \right)
$ $(i = 1, \cdots, n_3)$.
Then the skinny t$_{c}$-SVD of $\mathcal{A}$ is given as $\mathcal{A}= \mathcal{U} \ast \mathcal{S}
\ast \mathcal{V}^{T}$, where  $\mathcal{U}\in \R^{n_{1}\times r\times n_{3}},$
$\mathcal{V}\in \R^{n_{2}\times r\times n_{3}}$ satisfying
 \begin{align*}
 \mathcal{U}^T\ast \mathcal{U}=\mathcal{I}_{\textbf{r}},~\mathcal{V}^{T}\ast \mathcal{V}=\mathcal{I}_{\textbf{r}},~\rank_{t}(\mathcal{U})=\rank_{t}(\mathcal{V})=\textbf{r}, ~\rank_{ct}(\mathcal{U})=\rank_{ct}(\mathcal{V})=r,
 \end{align*}
 and $\mathcal{S}\in \R^{r\times r\times n_{3}}$ is diagonal tensor.
\end{remark}

The spectral norm  and the condition number of a tensor  are defined as follows.

\begin{definition}\label{def5}
The tensor spectral norm of $\mathcal{A}\in \R^{n_1 \times n_2 \times n_3}$, denoted as $\|\mathcal{A}\|$ is defined as
$\|\mathcal{A}\| = \|\overline{\mathcal{A}}\|$, where $\overline{\mathcal{A}}$ is  the block diagonal matrix of $\mathcal{A}$ in the
transform domain. The condition number of $\mathcal{A}$, denoted by $\kappa(\mathcal{A})$
 is defined as
$\kappa(\mathcal{A})=\frac{\sigma_{max}(\mathcal{A})}{\sigma_{min}(\mathcal{A})}$,
where $\sigma_{max}(\mathcal{A})$ and $\sigma_{min}(\mathcal{A})$ are the largest and the smallest nonzero singular values of $\overline{\mathcal{A}},$ respectively.
\end{definition}

In other words, the
tensor spectral norm of $\mathcal{A}$ equals to the matrix spectral
norm of its block diagonal form $\overline{\mathcal{A}}$.

\begin{definition}[\cite{Zhang2017}]
 Then the tensor operator norm is defined as
\begin{equation*}
\|{\tt L}\|_{\textup{op}} = \sup_{\|\mathcal{A}\|_F \leq 1} \|{\tt
L}(\mathcal{A})\|_F.
\end{equation*}
\end{definition}

If ${\tt L}: \mathbb{R}^{n_1\times n_2 \times n_3}\rightarrow \mathbb{R}^{n_4\times n_2 \times n_3}$ is a tensor operator mapping an $n_1\times n_2 \times n_3$  tensor $\ten{A}$ to an $n_4\times n_2 \times n_3$ tensor $\ten{B}$ via the $t_{c}$-product  as $\ten{B}={\tt L}(\ten{A})=\ten{L}\ast \ten{A},$ where $\ten{L}$ is an $n_4\times n_1 \times n_3$ tensor, we have $\|
{\tt L}\|_{\textup{op}} = \|\mathcal {L}\|$.
Now we need to introduce a new kind of  tensor basis which
is different from Definition 2.2 in \cite{Zhang2017}.  It is
worth noting that the new tensor basis plays an  important role in
tensor coordinate decomposition and defining  the tensor
incoherence conditions in the sequel.

\begin{definition} \label{defn}
The transformed column basis with respect to $\dct$, denoted as
$\tc{e}_i$, is a tensor of size $n_1 \times 1 \times n_3$ with the
$i$-th tube of $\dct [\tc{e}_i]$ is equal to $\frac{1}{\sqrt{n_3}}\tc{1}$
(each entry in the $i$-th tube is $1/\sqrt{n_3}$) and the rest equaling to 0.
Its associated conjugate transpose $\tc{e}_i^T$ is called
transformed row basis with respect to $\dct$.
\end{definition}

Moreover, some incoherence conditions on $\mathcal{L}_0$ are needed to ensure that it is not sparse.

\begin{definition}[Tensor Incoherence Conditions] Suppose
that $\mathcal{L}_0\in \R^{n_{1}\times n_{2}\times n_{3}},$ and  $\rank_{t}(\mathcal{L}_0)
= \textbf{r} ~\text{with}~ \rank_{ct}(\mathcal{L}_0) = r$. Its skinny t$_{c}$-SVD  is
 $\mathcal{L}_0 = \mathcal{U} \ast \mathcal{S} \ast \mathcal{V}^{T}$. Then $\mathcal{L}_0$ is
 said to satisfy the tensor incoherence conditions with parameter $\mu_{0} > 0$ if
\begin{align}
\max_{i=1, \dots, n_1} \|\mathcal{U}^{T} &\ast \tc{e}_i\|_F \leq
\sqrt{\frac{\mu_{0} r}{n_1}},~~ \max_{j=1, \dots, n_2} \|\mathcal{V}^{T}
\ast \tc{e}_j\|_F \leq \sqrt{\frac{\mu_{0} r}{n_2}}. \label{eq17}
\end{align}
\label{def13}
\end{definition}

\begin{definition} [Tensor Joint Incoherence Condition] Suppose that $\mathcal{L}_0\in \R^{n_{1}\times n_{2}\times n_{3}}$ with $\rank_{t}(\mathcal{L}_0) = \textbf{r} ~\text{and}~ \rank_{ct}(\mathcal{L}_0) = r.$  Assume there exist a positive numerical constant $\mu_{1}$ such that
\begin{align*}
\|\mathcal{L}_{0}\|_{\infty}\leq \mu_{1}\sqrt{\frac{ r}{n_1 n_2
n_3}}\|\mathcal{L}_{0}\|.
\end{align*}
\end{definition}

\subsection{Manifolds}

A smooth manifold with its tangent space, a proper definition of Riemannian metric for gradient projection and the retraction map are three essential settings for Riemannian optimization.
Moreover, the key idea of Riemannian gradient iteration method  contains two step in each iteration: (1) perform a gradient step in the search space; (2) map the result back to the  manifold by a proper retraction. In this subsection,
we first show  the  fixed transformed multi-rank tensor set forms an embedded manifold of $\mathbb{R}^{n_1\times n_2\times n_3}$. And then, we also consider the tangents spaces, Riemannian metric as well as the retraction mapping relate to the fixed transformed multi-rank tensor manifold.

The following proposition shows that the fixed transform multi-rank tensor set is indeed a smooth manifold.

\begin{proposition}\label{prop1}
Let
\begin{equation}\label{m1}
 \ten{M}_{\textbf{r}}=\{\mathcal{X}\in \R^{n_{1}\times n_{2}\times n_{3}}:\rank_{t}(\mathcal{X})=\textbf{r}\}
 \end{equation}
 denote the set of fixed  transformed multi-rank tensors with $\textbf{r}=(r_{1},r_{2},...,r_{n_{3}}).$ Then $\ten{M}_{\textbf{r}}$ is an embedded manifold of $\R^{n_{1}\times n_{2}\times n_{3}}$ and its dimension is  $\sum_{i=1}^{n_{3}}((n_{1}+n_{2})r_{i}-r_{i}^2).$
\end{proposition}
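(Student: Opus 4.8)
The plan is to reduce the claim to the well-known fact that the set of $n_1\times n_2$ matrices of fixed rank $r_i$ is a smooth embedded submanifold of $\R^{n_1\times n_2}$ of dimension $(n_1+n_2)r_i-r_i^2$, and then to transfer this structure through the linear isomorphism furnished by the DCT. Concretely, recall that the map $\ten{X}\mapsto\widehat{\ten{X}}=\dct(\ten{X},[],3)$ is a linear bijection of $\R^{n_1\times n_2\times n_3}$ onto itself (it is the DCT applied to each tube, an invertible linear transform), and that $\rank_t(\ten{X})=\textbf{r}$ holds if and only if $\rank(\widehat{\ten{X}}^{(i)})=r_i$ for every $i=1,\dots,n_3$. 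Hence $\ten{M}_{\textbf{r}}$ is the preimage under the linear isomorphism $\ten{X}\mapsto(\widehat{\ten{X}}^{(1)},\dots,\widehat{\ten{X}}^{(n_3)})$ of the product set
\begin{equation*}
\prod_{i=1}^{n_3}\ten{M}^{\mathrm{mat}}_{r_i},\qquad \ten{M}^{\mathrm{mat}}_{r_i}=\{\mtx{X}\in\R^{n_1\times n_2}:\rank(\mtx{X})=r_i\}.
\end{equation*}
A finite product of embedded submanifolds is an embedded submanifold of the product space, with dimension the sum of the dimensions, so $\prod_i\ten{M}^{\mathrm{mat}}_{r_i}$ is an embedded submanifold of $(\R^{n_1\times n_2})^{n_3}\cong\R^{n_1\times n_2\times n_3}$ of dimension $\sum_{i=1}^{n_3}\big((n_1+n_2)r_i-r_i^2\big)$. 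Since the preimage of an embedded submanifold under a linear isomorphism (a diffeomorphism) is again an embedded submanifold of the same dimension, the proposition follows.

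The key steps in order are therefore: (i) state precisely and justify that $\dct(\cdot,[],3)$, viewed as a map on $\R^{n_1\times n_2\times n_3}$, is a linear diffeomorphism, and that under the further identification $\widehat{\ten{X}}\leftrightarrow(\widehat{\ten{X}}^{(1)},\dots,\widehat{\ten{X}}^{(n_3)})$ the transformed multi-rank condition becomes the product rank condition slice by slice; (ii) recall (citing e.g. \cite{Kressner} or standard references such as \cite{lee2013smooth}) that $\ten{M}^{\mathrm{mat}}_{r}$ is a smooth embedded submanifold of $\R^{n_1\times n_2}$ of dimension $(n_1+n_2)r-r^2$ --- this is usually proved by exhibiting, around any rank-$r$ matrix, a local chart via the block decomposition $\begin{pmatrix}\mtx{A}&\mtx{B}\\\mtx{C}&\mtx{D}\end{pmatrix}$ with $\mtx{A}$ invertible and writing $\mtx{D}=\mtx{C}\mtx{A}^{-1}\mtx{B}$, which realizes rank $r$ locally as a graph; (iii) invoke the product-manifold theorem to get the embedded submanifold structure on $\prod_i\ten{M}^{\mathrm{mat}}_{r_i}$ with the additive dimension; (iv) pull back along the linear isomorphism to conclude. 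If one prefers a self-contained argument, step (ii)--(iii) can be merged: around a point $\ten{X}_0\in\ten{M}_{\textbf{r}}$ one builds an explicit local parametrization by choosing, for each $i$, an $r_i\times r_i$ invertible block of $\widehat{\ten{X}_0}^{(i)}$ and expressing the complementary block of each slice as the corresponding Schur-complement graph, then transporting this chart back through $\idct$.

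The main obstacle --- really the only nontrivial point --- is a uniformity/openness issue hidden in the product construction: the dimension formula $(n_1+n_2)r_i-r_i^2$ is only the local dimension near a matrix of rank \emph{exactly} $r_i$, and one must be careful that "rank exactly $r_i$" is the right condition (it is locally closed, not open, in $\R^{n_1\times n_2}$, but it \emph{is} an embedded submanifold). One should make sure the local charts constructed at $\ten{X}_0$ only see matrices of the prescribed rank in each slice --- i.e. that shrinking the coordinate neighbourhood keeps every slice at rank exactly $r_i$ --- which follows because the chosen $r_i\times r_i$ minor stays invertible on a neighbourhood and the Schur-complement graph has rank exactly $r_i$ there. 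A secondary, purely bookkeeping point is to confirm that the DCT normalization used in Definition~\ref{defn} and in $\dct(\cdot,[],3)$ is an invertible (indeed, up to scaling, orthogonal) transform so that it is genuinely a diffeomorphism; this is immediate from the standard DCT matrix being invertible. Everything else is a routine application of the product-manifold theorem and invariance of embedded submanifolds under diffeomorphisms.
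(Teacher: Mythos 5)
Your proof is correct, and it takes a genuinely different (more modular) route than the paper. The paper argues directly and self-containedly: around a point of $\ten{M}_{\textbf{r}}$ it passes to the transform domain, arranges (after a rank-preserving rearrangement) that each slice $\widehat{\mathcal{X}}^{(i)}$ has an invertible leading $r_i\times r_i$ block, defines the map $F$ sending the tensor to the direct sum of Schur complements $\widehat{\mathcal{D}}^{(i)}-\widehat{\mathcal{B}}^{(i)}(\widehat{\mathcal{A}}^{(i)}_{r_i})^{-1}\widehat{\mathcal{C}}^{(i)}$, verifies by differentiating an explicit curve that $F$ is a submersion, realizes $\ten{M}_{\textbf{r}}$ locally as the zero level set, reads off the dimension from the rank of $F_{*}$, and patches the local statements together with Lemma 5.5 of \cite{lee2013smooth}. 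You instead observe that the tube-wise DCT is a linear diffeomorphism of $\R^{n_1\times n_2\times n_3}$ identifying $\ten{M}_{\textbf{r}}$ with the product $\prod_{i}\{\mtx{X}\in\R^{n_1\times n_2}:\rank(\mtx{X})=r_i\}$, and then invoke the standard fixed-rank matrix manifold result together with the product-manifold theorem and invariance of embedded submanifolds under diffeomorphisms. Your route is shorter, outsources the only genuine analytic content (the fixed-rank matrix manifold, itself proved by exactly the Schur-complement chart you describe) to a standard reference, and delivers the slice-wise additive dimension count for free; the paper's route is self-contained and produces explicit local defining functions in the transform domain, which sit closer to the tangent-space description used in Proposition 2.2. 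The ``merged'' self-contained variant you sketch at the end is essentially the paper's argument, and the obstacle you flag --- that exact rank is only locally closed, so the chart neighborhood must be shrunk until the chosen $r_i\times r_i$ minors stay invertible and the graph description pins the rank at exactly $r_i$ --- is precisely the point both arguments must (and do) handle.
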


While the existence of such a smooth manifold structure, the tangent space of a point on the manifold can be given as follows.

\begin{proposition}\label{prop2}
Let $\ten{M}_{\textbf{r}}$ be given as \eqref{m1} and $\mathcal{X}_{l}\in \ten{M}_{\textbf{r}}$ be arbitrary. Suppose that $\mathcal{X}_{l}=\mathcal{U}_{l}\ast \mathcal{S}_{l}\ast
\mathcal{V}_{l}^{T},$ then the tangent space of $\ten{M}_{r}$ at
$\mathcal{X}_{l}$ can be given as
\begin{equation}\label{xin1}
\ten{T}_{\mathcal{X}_l}\ten{M}_{\textbf{r}}=\{\mathcal{U}_{l}\ast
\mathcal{Z}_{1}^{T}+\mathcal{Z}_{2}\ast
\mathcal{V}^{T}_{l}\},
\end{equation}
where $\mathcal{Z}_{1}\in \R^{n_2 \times r \times
n_3}~\text{and}~\mathcal{Z}_{2}\in \R^{n_1 \times r \times n_3}$ are free parameters.
\end{proposition}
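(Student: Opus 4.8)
The plan is to transfer the problem to the transform domain, where the $t_c$-product becomes ordinary block-diagonal matrix multiplication, and then exploit the well-known characterization of the tangent space of a fixed-rank matrix manifold. Concretely, recall from Definition~\ref{def3} that passing to $\overline{\mathcal{X}}=\bdiag(\widehat{\mathcal{X}})$ is a linear isometry (up to the fixed invertible DCT factor) between $\R^{n_1\times n_2\times n_3}$ and the space of block-diagonal matrices with blocks of size $n_1\times n_2$, and that $\rank_t(\mathcal{X})=\mathbf{r}$ says precisely $\rank(\widehat{\mathcal{X}}^{(i)})=r_i$ for every $i$. So $\ten{M}_{\mathbf{r}}$ corresponds under this isometry to the product manifold $\prod_{i=1}^{n_3}\ten{M}_{r_i}$, where $\ten{M}_{r_i}\subset\R^{n_1\times n_2}$ is the manifold of matrices of rank exactly $r_i$; this is the same identification already used in the proof of Proposition~\ref{prop1}.

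The key steps, in order, are as follows. First I would note that the tangent space of a product manifold is the product of the tangent spaces, so it suffices to recall the classical fact (e.g.\ as in \cite{Kressner}) that for $A=U_i\Sigma_i V_i^{T}$ of rank $r_i$ the tangent space $T_A\ten{M}_{r_i}$ equals $\{U_i Z_1^{T}+Z_2 V_i^{T}: Z_1\in\R^{n_2\times r_i},\ Z_2\in\R^{n_1\times r_i}\}$. Second, I would assemble these blockwise descriptions into a single block-diagonal matrix: using $\overline{\mathcal{X}}=\overline{\mathcal{U}}\,\overline{\mathcal{S}}\,\overline{\mathcal{V}}^{T}$ (which is exactly what the $t_c$-SVD $\mathcal{X}_l=\mathcal{U}_l\ast\mathcal{S}_l\ast\mathcal{V}_l^{T}$ becomes in the transform domain, by Definition~\ref{def1} and Definition~\ref{def2}), any tangent vector is a block-diagonal matrix whose $i$-th block has the form $\widehat{\mathcal{U}}_l^{(i)}(\widehat{\mathcal{Z}}_1^{(i)})^{T}+\widehat{\mathcal{Z}}_2^{(i)}(\widehat{\mathcal{V}}_l^{(i)})^{T}$. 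Third, I would pull this back through $\idct$ to the original domain: since $\idct$ acts along the third dimension only and the $t_c$-product is defined exactly so that blockwise matrix multiplication in the transform domain corresponds to $\ast$ in the original domain, the block-diagonal matrix $\overline{\mathcal{U}}_l\,\overline{\mathcal{Z}}_1^{T}+\overline{\mathcal{Z}}_2\,\overline{\mathcal{V}}_l^{T}$ folds and inverse-transforms to $\mathcal{U}_l\ast\mathcal{Z}_1^{T}+\mathcal{Z}_2\ast\mathcal{V}_l^{T}$, with $\mathcal{Z}_1=\idct[\mbox{\tt fold}(\overline{\mathcal{Z}}_1)]\in\R^{n_2\times r\times n_3}$ and $\mathcal{Z}_2$ likewise in $\R^{n_1\times r\times n_3}$ free. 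This yields the claimed form \eqref{xin1}. A dimension check against Proposition~\ref{prop1}, namely $\sum_i((n_1+n_2)r_i-r_i^2)$, confirms we have not over- or under-counted: the $r_i^2$ correction per slice accounts for the overlap $\widehat{\mathcal{U}}_l^{(i)}(\cdot)^{T}\widehat{\mathcal{V}}_l^{(i)}$ between the two families.

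The main obstacle is bookkeeping rather than conceptual depth: one must be careful that the ``$r$'' appearing in the ambient sizes $\R^{n_2\times r\times n_3}$ of $\mathcal{Z}_1,\mathcal{Z}_2$ is $\rank_{ct}(\mathcal{X}_l)=\max_i r_i$ (the skinny $t_c$-SVD convention fixed in the Remark), whereas the effective number of free parameters in the $i$-th transform slice is only $r_i$; the entries of $\widehat{\mathcal{Z}}_1^{(i)},\widehat{\mathcal{Z}}_2^{(i)}$ in columns beyond $r_i$ are annihilated because the corresponding columns of $\widehat{\mathcal{U}}_l^{(i)},\widehat{\mathcal{V}}_l^{(i)}$ are zero. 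Writing \eqref{xin1} with a single set of free tensors $\mathcal{Z}_1,\mathcal{Z}_2$ is therefore a mild abuse that must be reconciled with the parametrization used elsewhere in the paper, but it does not affect the identity of the linear subspace $\ten{T}_{\mathcal{X}_l}\ten{M}_{\mathbf{r}}$ itself. The remaining care point is verifying that the DCT-based transpose and $t_c$-product of Definitions~\ref{def1}--\ref{def2} really do intertwine with blockwise operations exactly as the FFT-based $t$-product does — but this is immediate from the block-diagonalization formulas and has effectively been established when proving Proposition~\ref{prop1}.
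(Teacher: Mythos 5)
Your proof is correct, but it follows a genuinely different route from the one in the paper. The paper argues intrinsically with the $t_{c}$-product: it defines the curve $\gamma(t)=(\mathcal{U}_{l}+t\mathcal{X})\ast\mathcal{S}_{l}\ast(\mathcal{V}_{l}+t\mathcal{Y})^{T}$ inside $\ten{M}_{\textbf{r}}$, differentiates at $t=0$ to obtain $\mathcal{X}\ast\mathcal{S}_{l}\ast\mathcal{V}_{l}^{T}+\mathcal{U}_{l}\ast\mathcal{S}_{l}\ast\mathcal{Y}^{T}$, and reads off \eqref{xin1} by absorbing $\mathcal{S}_{l}$ into the free parameters $\mathcal{Z}_{1},\mathcal{Z}_{2}$. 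You instead pass to the transform domain, identify $\ten{M}_{\textbf{r}}$ with the product of fixed-rank matrix manifolds $\prod_{i=1}^{n_{3}}\ten{M}_{r_{i}}$, apply the classical slice-wise tangent-space formula, and fold back through $\idct$. Your route buys exact equality of the two sets in one stroke, since the matrix-level fact describes the \emph{entire} tangent space of each slice manifold and the product structure then yields all of $\ten{T}_{\mathcal{X}_{l}}\ten{M}_{\textbf{r}}$; the paper's curve construction, as written, only exhibits a family of tangent vectors and hence only the inclusion of the right-hand side of \eqref{xin1} into the tangent space, with equality resting implicitly on the dimension count of Proposition \ref{prop1} (which you also perform as a sanity check). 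The paper's argument is in turn more self-contained, needing no appeal to the matrix fixed-rank literature. Your observation that the second dimension $r=\max_{i}r_{i}$ of $\mathcal{Z}_{1},\mathcal{Z}_{2}$ exceeds the $r_{i}$ effective parameters in the $i$-th transform slice, the excess being annihilated by the zero columns of $\widehat{\mathcal{U}}_{l}^{(i)}$ and $\widehat{\mathcal{V}}_{l}^{(i)}$, is a legitimate care point that the paper's statement glosses over.
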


The tangent bundle of $\ten{M}_{\textbf{r}}$, denoted by $T\ten{M}_{\textbf{r}}$, is defined as the  disjoint union of the tangent spaces at all points of $\ten{M}_{\textbf{r}}: T\ten{M}_{\textbf{r}}=\bigcup_{p\in \ten{M}_{\textbf{r}}}T_{p}\ten{M}_{\textbf{r}}.$  In some sense, the tangent bundle can be seen as a collection of vector spaces.
 Note that when a smooth manifold $\mathcal{M}$ is endowed with a specific Riemannian metric $g$, then the pair $(\ten{M},g)$ is called a Riemannian manifold.  Here, for the smooth manifold $\ten{M}_{\textbf{r}},$ the Riemannian metric $g_{\ten{X}}$ is defined as
\begin{equation}
g_{\ten{X}}(\zeta,\eta):=<\zeta,\eta> ~\text{with}~ \ten{X}\in \ten{M}_{\textbf{r}}~\text{and}~\zeta,\eta \in T_{\ten{X}} \ten{M}_{\textbf{r}}.
\end{equation}
Based on this metric, $(\ten{M}_{r},g_{\ten{X}})$ becomes a Riemannian manifold. In the sequel, we write $\ten{M}_{r}$ as a Riemannian manifold   for simplicity.   After that, the  gradient of an objective function in Riemannian manifold can be introduced. For the Riemannian manifold $\ten{M}_{\textbf{r}},$  the Riemannian gradient of a smooth function $f: \ten{M}_{\textbf{r}}\rightarrow \mathbb{R}$ at $\mathcal{X}\in \ten{M}_{\textbf{r}}$ is defined as the unique tangent vector $\textbf{grad} f(\ten{X}) $ in $T_{\ten{X}}\ten{M}_{\textbf{r}}$ such that	$\<\textbf{grad}  f(\ten{X}),\xi \>= \textbf{D}f(\xi)$ for all $\xi \in T_{\ten{X}}\ten{M}_{\textbf{r}},$ where $\textbf{D}f$ denotes the directional derivative. Note that $\ten{M}_{\textbf{r}}$ is an  embedded smooth manifold of $\mathbb{R}^{n_1\times n_2 \times n_3},$ then the Riemannian gradient can be seen as the orthogonal projection onto the tangent space of the gradient of $f$ on $\mathbb{R}^{n_1\times n_2 \times n_3}$.  Note that the tangent space of $\ten{M}_{\textbf{r}}$ at $\ten{X}_{l}$ can be expressed as \eqref{xin1}, then a tensor $\mathcal{A}\in \R^{n_{1}\times n_{2} \times n_{3}}$  can be projected onto $\ten{T}_{\mathcal{X}_{l}}\ten{M}_{\textbf{r}}$ by the orthogonal projection $P_{\ten{T}_{\mathcal{X}_{l}}\ten{M}_{\textbf{r}}}  :  \R^{n_{1}\times n_{2} \times n_{3}}\rightarrow \ten{T}_{\mathcal{X}_{l}}\ten{M}_{\textbf{r}}$ with
\begin{eqnarray}
P_{\ten{T}_{\mathcal{X}_{l}}\ten{M}_{\textbf{r}}} (\mathcal{A}) =  \mathcal{U}_{l}\ast
\mathcal{U}_{l}^{T}\ast \mathcal{A} +\mathcal{A}\ast
\mathcal{V}_{l}\ast \mathcal{V}_{l}^{T} - \mathcal{U}_{l}\ast
\mathcal{U}_{l}^{T}\ast \mathcal{A}\ast \mathcal{V}_{l}\ast
\mathcal{V}_{l}^{T}.\label{prj1}
\end{eqnarray}
It follows that the Riemannian gradient of the
objective function
$f(\mathcal{Z})=\frac{1}{2}\|R_{\Omega}\mathcal{Z}-R_{\Omega}\mathcal{A}\|_{F}^{2}$
given in model \eqref{main1}, can be expressed as
$\mbox{\tt grad}~ f(\mathcal{Z}):=P_{\ten{T}_{\mathcal{Z}}\ten{M}_{\textbf{r}}}(R_{\Omega}\mathcal{Z}-R_{\Omega}\mathcal{A})$.

\subsubsection{Retraction}

A Riemannian manifold is  not a linear space in general, the calculations
required for a continuous optimization method need to be performed in its
tangent space. Therefore, in each step, a so-called retraction mapping is needed to project points from a
tangent bundle  $T\ten{M}$  to the manifold $\ten{M}$  to generate the new iteration.
Known from Definition 1 in \cite{absil2012} that retractions are essentially first-order approximations of the exponential map of the manifold which is usually expensive to compute.
If $\ten{M}$ is an embedded submanifold,  then the orthogonal projection
$$P_{\ten{M}}(x+\xi)=\mbox{\tt arg}_{y\in\ten{M}}\|x+\xi-y\|$$
include the so called projective retraction
$$R:\mathcal{U}\rightarrow \ten{M}, ~~(x,\xi)\rightarrow P_{\ten{M}}(x+\xi).$$
For the fixed rank matrix  manifold case, mapping $R$ can be computed in closed-form by the SVD truncation \cite{bart}. In our setting,  the Riemannian manifold $\ten{M}_{\textbf{r}}$ is a embedded manifold of $\R^{n_{1}\times n_{2}\times n_{3}}$, we can choose metric projection as a retraction:
\begin{align}\label{new1}
R_{\mathcal{X}}:\mathcal{U}_{\mathcal{X}}\rightarrow
\ten{M}_{\textbf{r}},~(\ten{X},\mathcal{\xi})\rightarrow
P_{\ten{M}_{\textbf{r}}}(\mathcal{X}+\mathcal{\xi}):=\mbox{\tt arg} \min_{\mathcal{Z}\in
\ten{M}_{\textbf{r}}}\|\mathcal{X}+\mathcal{\xi}-\mathcal{Z}\|_{F},
\end{align}
where $\mathcal{U}_{\mathcal{X}}\subseteq
\ten{T}_{\mathcal{X}}\ten{M}_{\textbf{r}}$ is a suitable neighborhood around zero
and $P_{\ten{M}_{\textbf{r}}}$ is the orthogonal projection onto
$\ten{M}_{\textbf{r}}$.
Under the tensor t$_{c}$-SVD framework, and recall the retraction operator defined in \eqref{new1}, we can get the follows.

\begin{proposition}(t$_{c}$-SVD truncation as   Retraction) \label{prop3}
Let $\ten{X}\in \ten{M}_{\textbf{r}},$ with $\textbf{r}=(r_{1},...,r_{n_{3}}).$ The map
\begin{align}\label{truncation}
R_{\mathcal{X}}:\ten{T}_{\mathcal{X}}\ten{M}_{\textbf{r}}\rightarrow
\ten{M}_{\textbf{r}},~(\ten{X},\mathcal{\xi})\rightarrow
P_{\ten{M}_{\textbf{r}}}(\mathcal{X}+\mathcal{\xi}):=\mathcal{U} \ast \mathcal{S}_{r} \ast \mathcal{V}^{T}
\end{align}
 where $\mathcal{U} \in \R^{n_1 \times r \times n_3}, \mathcal{V} \in \R^{n_2 \times r \times n_3},$  $\rank_{t}(\mathcal{U})=\rank_{t}(\mathcal{V})=\textbf{r}$, $\mathcal{U}^T\ast \mathcal{U}=\mathcal{I}_{\textbf{r}},$ $ \mathcal{V}^{T}\ast \mathcal{V}=\mathcal{I}_{\textbf{r}},$ and
  $\mathcal{S}_{\textbf{r}}$ is a diagonal tensor with
 \begin{align*}
\hat{\mathcal{S}}_{r}^{(i)}:= \left\{\begin{array}{lll}
\hat{\mathcal{S}}_{r}^{(i)}, & \mbox{if} \ i\leq r_{i},\\
0, &\mbox{if} \ i> r_{i},\\
\end{array}\right.
\end{align*}
is a retraction on $\ten{M}_{\textbf{r}}$ around $\ten{X}$.
\end{proposition}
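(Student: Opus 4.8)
The plan is to verify the three defining properties of a retraction in the sense of \cite{absil2012,Absil}: on a suitable neighborhood $\mathcal{U}_{\mathcal{X}}$ of the origin in $\ten{T}_{\mathcal{X}}\ten{M}_{\textbf{r}}$, one needs (i) $R_{\mathcal{X}}$ smooth, (ii) $R_{\mathcal{X}}(\mathcal{X},0)=\mathcal{X}$, and (iii) $\mathrm{D}R_{\mathcal{X}}(\mathcal{X},0)=\mathrm{id}_{\ten{T}_{\mathcal{X}}\ten{M}_{\textbf{r}}}$ under the canonical identification $\ten{T}_{0}\ten{T}_{\mathcal{X}}\ten{M}_{\textbf{r}}\cong\ten{T}_{\mathcal{X}}\ten{M}_{\textbf{r}}$. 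The first move is to pass to the transform domain. Since $\dct$ along the third mode is (a scalar multiple of) an orthogonal transformation, the identity $\langle\mathcal{A},\mathcal{B}\rangle=\langle\overline{\mathcal{A}},\overline{\mathcal{B}}\rangle$ shows that $\mathcal{Z}\mapsto\overline{\mathcal{Z}}$ is an isometric linear bijection of $(\R^{n_1\times n_2\times n_3},\|\cdot\|_F)$ onto the space of $n_1n_3\times n_2n_3$ block-diagonal matrices, and $\rank_{t}(\mathcal{Z})=\textbf{r}$ means precisely $\rank(\widehat{\mathcal{Z}}^{(i)})=r_i$ for each $i$. Writing $\mathcal{Y}=\mathcal{X}+\xi$, the defining minimization of $P_{\ten{M}_{\textbf{r}}}$ in \eqref{new1} becomes the decoupled problem
\[
\min_{\mathcal{Z}\in\ten{M}_{\textbf{r}}}\|\mathcal{Y}-\mathcal{Z}\|_F^2=\sum_{i=1}^{n_3}\ \min_{\rank(\mathbf{W}_i)\le r_i}\|\widehat{\mathcal{Y}}^{(i)}-\mathbf{W}_i\|_F^2,
\]
where the passage from rank exactly $r_i$ to rank at most $r_i$ is harmless because the former set is dense in the latter.

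Next I identify the closed-form minimizer. By the Eckart--Young--Mirsky theorem, each inner problem is solved by truncating the SVD of $\widehat{\mathcal{Y}}^{(i)}$ to its $r_i$ largest singular values; when $\sigma_{r_i}(\widehat{\mathcal{Y}}^{(i)})>\sigma_{r_i+1}(\widehat{\mathcal{Y}}^{(i)})$ this minimizer is unique and of rank exactly $r_i$, so it lifts to a point of $\ten{M}_{\textbf{r}}$. Reassembling the slice-wise truncations via $\idct$ is exactly the tensor $\mathcal{U}\ast\mathcal{S}_r\ast\mathcal{V}^T$ of \eqref{truncation}, computed by the slice-wise SVD of Algorithm \ref{algj1}; hence, wherever all $n_3$ singular-value gaps hold, $R_{\mathcal{X}}(\mathcal{X},\xi)=P_{\ten{M}_{\textbf{r}}}(\mathcal{X}+\xi)$ is well defined and equals the metric projection. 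To secure such a neighborhood, note that at $\xi=0$ we have $\widehat{\mathcal{Y}}^{(i)}=\widehat{\mathcal{X}}^{(i)}$ with $\sigma_{r_i}(\widehat{\mathcal{X}}^{(i)})>0=\sigma_{r_i+1}(\widehat{\mathcal{X}}^{(i)})$ for all $i$; since singular values, slicing and $\dct$ all depend continuously on $\mathcal{Y}$, there is $\varepsilon>0$ so that the strict inequalities persist for every $\xi\in\ten{T}_{\mathcal{X}}\ten{M}_{\textbf{r}}$ with $\|\xi\|_F<\varepsilon$, and this ball serves as the required $\mathcal{U}_{\mathcal{X}}$.

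It remains to check (i)--(iii) on $\mathcal{U}_{\mathcal{X}}$. Property (ii) is immediate since $\mathcal{X}\in\ten{M}_{\textbf{r}}$ is its own nearest point. For (i), on the gap region the truncated SVD depends real-analytically on $\widehat{\mathcal{Y}}^{(i)}$ (for instance through the Riesz spectral projector onto the top-$r_i$ singular subspace), and $\mathcal{Y}\mapsto\widehat{\mathcal{Y}}$, $\widehat{\mathcal{Z}}\mapsto\mathcal{Z}$ are linear, so $R_{\mathcal{X}}$ is smooth; alternatively one may invoke \cite{absil2012}, where the metric projection onto a $C^2$ (here even real-analytic, by Proposition \ref{prop1}) embedded submanifold is shown to be a smooth retraction near the zero section. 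For (iii), $P_{\ten{M}_{\textbf{r}}}$ restricts to the identity on $\ten{M}_{\textbf{r}}$, so its ambient differential at $\mathcal{X}$ is the orthogonal projection onto $\ten{T}_{\mathcal{X}}\ten{M}_{\textbf{r}}$ given in \eqref{prj1}; restricting that projection to $\ten{T}_{\mathcal{X}}\ten{M}_{\textbf{r}}$ yields the identity, hence $R_{\mathcal{X}}(\mathcal{X},t\xi)=\mathcal{X}+t\xi+o(t)$ for every tangent $\xi$. Equivalently, via the isometry $\mathcal{Z}\mapsto\overline{\mathcal{Z}}$ the manifold $\ten{M}_{\textbf{r}}$ is a product of $n_3$ fixed-rank matrix manifolds, slice-wise SVD truncation is the classical retraction on each factor \cite{bart}, and both products and isometric images of retractions are retractions.

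I expect the middle step to be the main obstacle: showing that the metric projection is single-valued and smooth near $\mathcal{X}$ amounts to controlling the singular-value gaps $\sigma_{r_i}>\sigma_{r_i+1}$ simultaneously across all $n_3$ frontal slices of $\widehat{\mathcal{X}+\xi}$ and thereby exhibiting the tubular neighborhood $\mathcal{U}_{\mathcal{X}}$; once this is in place, Eckart--Young and the standard theory of projective retractions do the rest. A minor point that must be pinned down carefully is the exact normalization of $\dct$ so that $\mathcal{Z}\mapsto\overline{\mathcal{Z}}$ is a genuine isometry (or differs from one only by a global scalar), which is precisely what permits the slice-by-slice reduction to the matrix case.
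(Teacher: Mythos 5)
Your proposal is correct and follows essentially the same route as the paper: both verify the three conditions of Definition 1 in \cite{absil2012} by viewing $P_{\ten{M}_{\textbf{r}}}$ as slice-wise SVD truncation in the DCT domain, using the singular-value gap $\sigma_{r_i}>\sigma_{r_i+1}$ to get smoothness on a neighborhood of $\ten{X}$, the fixed-point property of truncation on $\ten{M}_{\textbf{r}}$ for the zero-section condition, and the first-order tangency $\|(\ten{X}+t\xi)-P_{\ten{M}_{\textbf{r}}}(\ten{X}+t\xi)\|_F=O(t^2)$ for local rigidity. Your added Eckart--Young justification that the truncation actually realizes the metric projection, and the explicit continuity argument for persistence of the gaps, merely make explicit what the paper leaves implicit.
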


In addition, vector transport is defined as a method to transport tangent vectors
from one tangent space to another, which was introduced in
\cite{Absil,Kressner}. In the Riemannian conjugate gradient descent method discussed here, the
search direction is a linear combination of the projected gradient direction and the past search direction
 projected onto the tangent space of the current estimate.
Then we can define
\begin{align*}
\ten{T}_{\mathcal{X}\rightarrow\mathcal{Y}}:~\ten{T}_{\mathcal{X}}\ten{M}_{\textbf{r}}\rightarrow \ten{T}_{\mathcal{Y}}\ten{M}_{\textbf{r}},~
\mathcal{\xi}\rightarrow P_{\ten{T}_{\mathcal{Y}}\ten{M}_{\textbf{r}}}(\mathcal{\xi}),
\end{align*}
 where $P_{\ten{T}_{\mathcal{Y}}\ten{M}_{\textbf{r}}}$ is defined as in \eqref{prj1}.

\section{The Convergence Analysis}
\label{sec3}

\subsection{Sampling with Replacement}

For matrix or tensor completion problems, most of the existing work \cite{candes2009exact,jiangm,Kressner,bart} studied a Bernoulli sampling model as a proxy uniform sampling. It has been proved that the probability of failure when the set of observed entries is sampled uniform from the collection of set is bounded by 2 times of the probability of failure
under the Bernoulli model \cite{candes2006robust}. However, it is bigger or equal to the probability that fails when the entries are sampled independently with replacement \cite{Recht2011}. It is surprising that after changing the sampling model, most of the theorems from \cite{candes2009exact} came to be simple consequences of a noncommutative variant of Bernstein's inequality \cite{Recht2011}. In this paper, we consider the sampling with replacement model for
$\Omega$ in which each index is sampled independently from the
uniform distribution on
$\{1,...,n_{1}\}\times\{1,...,n_{2}\}\times\{1,...,n_{3}\}.$  At first glance,
sampling with replacement is not suitable for analyzing matrix or tensor completion problems,
as there may be some duplicate entries. However, the maximum duplication of any entry can be derived. Then,
this model can be viewed as a proxy for the uniform sampling model.

Different from the existing conclusions which based on
sampling without replacement model, here $R_{\Omega}$ defined as \eqref{s1} is not a unitary
projection if there are duplicates in $\Omega$.  Then,
the tensor completion model (\ref{n1}) can be rewritten as
\begin{equation*}
\min\limits_{\mathcal{Z}}~~f(\mathcal{Z})=\frac{1}{2}\|R_{\Omega}(\mathcal{Z}-\mathcal{A})\|^{2}_{F}
~~\text{s.t.}~~rank_{t}(\mathcal{Z})=\textbf{r}.
\end{equation*}
Suppose that $m=|\Omega|$ and $n=\max\{n_{1},n_{2},n_{3}\}$, then
for $\beta>1$, we have the following lemma.

\begin{lemma}\label{le2}
With high probability at least $1-n^{3-3\beta}$, the maximum number
of repetitions of any entry in $\Omega$ is less than
$\frac{10}{3}\log n$ for $n>9$ and $\beta>1$.
\end{lemma}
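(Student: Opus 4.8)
Write $N=n_1n_2n_3\le n^{3}$ and recall that $\Omega$ is obtained by drawing $m$ cells independently and uniformly from $\{1,\dots,n_1\}\times\{1,\dots,n_2\}\times\{1,\dots,n_3\}$. For a fixed cell $\alpha$ I would let $Z_\alpha$ denote its multiplicity, so that $Z_\alpha\sim\mathrm{Binomial}(m,1/N)$ and the quantity to be controlled is exactly $\max_\alpha Z_\alpha$. The plan is the classical ``balls into bins'' argument: estimate the upper tail of a single $Z_\alpha$, take a union bound over the at most $n^{3}$ cells, and then substitute the threshold $k_0=\tfrac{10}{3}\log n$.

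For the single-cell tail I would use that the event $\{Z_\alpha\ge k_0\}$ forces some $k_0$ of the $m$ draws to all land in $\alpha$, so a union bound over the $\binom{m}{k_0}$ choices of such a subset gives
\begin{equation*}
\PP\big(Z_\alpha\ge k_0\big)\ \le\ \binom{m}{k_0}\Big(\frac{1}{N}\Big)^{k_0}\ \le\ \Big(\frac{em}{k_0\,N}\Big)^{k_0},
\end{equation*}
using $\binom{m}{k_0}\le(em/k_0)^{k_0}$. In the regime relevant to the completion theorems of this paper the sample size $m$ stays far below the ambient dimension $N$ (it will be of the order of $n^{2}\log n$ up to rank and incoherence factors, hence $\ll n_1n_2n_3$), so $m/N\le 1$ and in fact $m/N=o(1)$; consequently $\PP(Z_\alpha\ge k_0)\le(e/k_0)^{k_0}$, with the factor $(m/N)^{k_0}$ kept in reserve as extra room. (Equivalently, one may invoke the Chernoff/Poisson tail bound $\PP(Z_\alpha\ge k_0)\le e^{-m/N}(em/(k_0N))^{k_0}$, which is only smaller.) A union bound over the $\le n^{3}$ cells then yields
\begin{equation*}
\PP\Big(\max_\alpha Z_\alpha\ge k_0\Big)\ \le\ n^{3}\Big(\frac{e}{k_0}\Big)^{k_0}\Big(\frac{m}{N}\Big)^{k_0}.
\end{equation*}

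Finally I would substitute $k_0=\tfrac{10}{3}\log n$ and verify that the right-hand side is at most $n^{3-3\beta}$ whenever $n>9$ and $\beta>1$. Taking logarithms, this reduces to an inequality of the shape $k_0\log\!\big(k_0/(em/N)\big)\ge 3\beta\log n$; since $\log n>\log 9>2$ for $n>9$, the leading term $k_0\log(k_0/e)=\tfrac{10}{3}\log n\cdot\log\!\big(\tfrac{10}{3e}\log n\big)$ is already of order $\log n$, while the vanishing factor $(m/N)^{k_0}$ contributes an additional $n^{-\omega(1)}$ that absorbs the fixed polynomial slack $n^{3\beta}$. The one genuinely delicate point --- and the main obstacle I anticipate --- is the explicit constant bookkeeping: checking that the constant $\tfrac{10}{3}$ together with the threshold $n>9$ (and the sub-cubic bound on $m$) make the $\log\log n$ term large enough to yield the precise failure probability $n^{3-3\beta}$ stated. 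Everything else is a direct application of the binomial tail estimate and the union bound.
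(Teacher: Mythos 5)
Your strategy is the same as the paper's: bound the multiplicity $Z_\alpha\sim\mathrm{Binomial}(m,1/(n_1n_2n_3))$ of a fixed cell by a Chernoff-type tail estimate (the paper cites $\mathbb{P}[Z_\alpha\ge t]\le (m/(n_1n_2n_3))^{t}\exp(t-m/(n_1n_2n_3))$ from Hagerup--R\"{u}b, which is exactly your $(em/(k_0N))^{k_0}e^{-m/N}$), then union bound over the $n_1n_2n_3\le n^{3}$ cells. The one substantive discrepancy is in the final constant check, and it is precisely the step you flagged as delicate. The paper's own proof takes the threshold to be $t=\tfrac{10}{3}\beta\log n$, not $\tfrac{10}{3}\log n$; the $\beta$ is evidently dropped from the lemma statement by a typo, since the lemma is invoked immediately afterwards as $\|R_{\Omega}\|\le\tfrac{10}{3}\beta\log n$. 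With that $\beta$-dependent threshold and only the assumption $m\le n_1n_2n_3$, the union bound gives $n^{3}\exp\big(-t(\log t-1)\big)=n^{3}\big(n^{-10\beta/3}\big)^{\log t-1}$, and it suffices that $\log\big(\tfrac{10}{3}\beta\log n\big)\ge 1+\tfrac{9}{10}$, which holds for $n\ge 9$ and $\beta>1$ because $\tfrac{10}{3}\log 9\approx 7.32>e^{1.9}\approx 6.69$; no smallness of $m/N$ is needed. Your version, which keeps the $\beta$-free threshold $k_0=\tfrac{10}{3}\log n$ as literally stated, cannot absorb the slack $n^{3\beta}$ for large $\beta$ from the $\log\log n$ term alone: the leading term gives only $\log(k_0/e)\approx 1$ against the required $\tfrac{9\beta}{10}$, so you genuinely need the reserve factor $(m/N)^{k_0}$ and hence an unstated hypothesis $m\ll n_1n_2n_3$. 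Either adopt the $\beta$-dependent threshold (which is what is actually used downstream) or make the bound on $m/(n_1n_2n_3)$ explicit; as written, the last step of your verification does not close for all $\beta>1$.
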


\begin{proof}
The proof follows the lines of \cite[Proposition 5]{Recht2011} in the matrix case. The tool used here is the standard Chernoff bound for the Bernoulli
distribution. Note that for a fixed entry, the probability it is
sampled more than $t$ times is equal to the probability of more than
$t$ heads occurring in a sequence of $m$ tosses where the
probability of a head is $\frac{1}{n_{1}n_{2}n_{3}}$. It follows from
\cite{hagerup1990} that
\begin{equation*}
P[\text{more that t head in m trials}] \leq
\left(\frac{m}{n_{1}n_{2}n_{3}}\right)^{t}
\exp\left(t-\frac{m}{n_{1}n_{2}n_{3}}\right).
\end{equation*}
 Then if $n\geq9$, applying the union bound over all of the $n_{1}n_{2}n_{3}$ entries we can
 get
\begin{eqnarray*}
& & P[\text{any entry is selected more than} ~\frac{10}{3}\beta\log
n~\text{times}]\\
& \leq &n_{1}n_{2}n_{3}(\frac{10}{3}\beta (\log
n))^{-\frac{10}{3}\beta \log n} \exp(\frac{10}{3}\beta \log n)\\
& \leq & n^{3}\exp(-\frac{10}{3}\beta \log n \log( \frac{10}{3}\beta
\log n ))\exp(\frac{10}{3}\beta \log n)\leq
n^{3}(n^{-\frac{10\beta}{3}})^{\log(\frac{10}{3} \beta \log
n)-1} \leq n^{3-3\beta}.
\end{eqnarray*}
\end{proof}
It follows from Lemma \ref{le2} that $\|R_{\Omega}\|\leq \frac{10}{3}\beta\log n$ with
high probability.

\subsection{The Algorithm}

We first identify a small neighborhood around the measured
low rank tensor such that for any given initial guess in this
neighborhood, the
tensor Riemannian conjugate gradient descent (Algorithm 2) will converge linearly to the objective
tensor. In Algorithm 2 (line 3), the search direction is a linear
combination of the projected gradient descent direction and the past
search direction projected onto the tangent space of the current
estimate. The orthogonalization weight $\beta_{l}$ in line 2 is
selected in a way such that $P_{\ten{T}_{l}}(P_{l})$ is conjugate
orthogonal to $P_{\ten{T}_{l}}(P_{l-1})$. In line 6, $H_{r}(\cdot)$ denotes the $tc$-SVD truncation operator given in Proposition \ref{prop3}.

\begin{algorithm}[h]
\caption{Tensor Riemannian Conjugate Gradient Descent} \label{alg1}
\textbf{Initilization:} $\mathcal{X}_{0}=\mathcal{U}_{0}\ast\mathcal{S}_{0}\ast \mathcal{V}_{0}^{T}\in \R^{n_1 \times n_2 \times n_3}$, $\beta_{0}=0,$ and $\mathcal{Q}_{-1}=0$. \\
\textbf{for} $l=0,1,...$ do\\
~~1: $\mathcal{G}_{l}=R_{\Omega}(\mathcal{X}-\mathcal{X}_{l})$;\\
~~2: $\beta_{l}=-\frac{\langle P_{\ten{T}_{l}}(\mathcal{G}_{l}),R_{\Omega}P_{\ten{T}_{l}}(\mathcal{Q}_{l-1})\rangle}{\langle P_{\ten{T}_{l}}(\mathcal{Q}_{l-1}),R_{\Omega}P_{\ten{T}_{l}}(\mathcal{Q}_{l-1})\rangle}$;\\
~~3: $\mathcal{Q}_{l}=P_{\ten{T}_{l}}(\mathcal{G}_{l})+\beta_{l}P_{\ten{T}_{l}}(\mathcal{Q}_{l-1})$;\\
~~4: $\alpha_{l}=\frac{\langle P_{\ten{T}_{l}}(\mathcal{G}_{l}),P_{\ten{T}_{l}}(\mathcal{Q}_{l})\rangle}{\langle P_{\ten{T}_{l}}(\mathcal{Q}_{l}),R_{\Omega}P_{\ten{T}_{l}}(\mathcal{Q}_{l})\rangle};$\\
~~5: $\mathcal{W}_{l}=\mathcal{X}_{1}+\alpha_{l}\mathcal{Q}_{l}$\\
~~6: $\mathcal{X}_{l}=H_{r}(\mathcal{W}_{l})$\\
  \textbf{end for}
\end{algorithm}

In order to improve the robustness of the non-linear conjugate
gradient descent methods, we introduce a restarted variant in
Algorithm 4: that is, $\beta_{l}$ is set $0$ and restarting occurs as long
as either of the following conditions is violated:
\begin{align}\label{eq6}
\frac{| \langle
P_{\ten{T}_{l}}(\mathcal{G}_{l}),P_{\ten{T}_{l}}(\mathcal{Q}_{l-1})\rangle
|}{\|P_{\ten{T}_{l}}(\mathcal{G}_{l})\|_{F}\|P_{\ten{T}_{l}}(\mathcal{Q}_{l-1})\|_{F}}\leq
k_{1},~~\|P_{\ten{T}_{1}}(\mathcal{G}_{l})\|_{F}\leq
k_{2}\|P_{\ten{T}_{l}}(\mathcal{Q}_{l-1})\|_{F}.
\end{align}
The first restarting condition guarantees that the residual will be
substantially orthogonal to the past search direction when projected
onto the tangent space of current estimate so that the new search
direction can be sufficiently gradient related. In the classical CG
algorithm for linear systems, the residual is exactly orthogonal to
all the past search directions. Roughly speaking, the second
restarting condition implies that the projection of current residual
cannot be too large when compared to the projection of the past
residual since the search direction is gradient related by the first
restarting condition. In our implementations, we take $k_1 = 0.1$
and $k_2 = 1$.

We first list some lemmas which will be used many times in the sequel. Their proofs can be found in Section \ref{seca}.

\begin{lemma}\label{lem5}
Suppose that $\mathcal{X}, \mathcal{X}_{l}\in \R^{n_{1}\times n_{2}\times n_{3}}$ with $\rank_{t}(\mathcal{X})=\rank_{t}(\mathcal{X}_{l}) = \textbf{r} ~\text{and}~ \rank_{ct}(\mathcal{X}_{l})=\rank_{ct}(\mathcal{X}_{l}) = r.$
Their skinny $t_{c}$-SVDs are given as $\mathcal{X}=\mathcal{U} \ast \Sigma \ast
\mathcal{V}^{T}$ and $\mathcal{X}_{l}=\mathcal{U}_{l} \ast \Sigma_{l}  \ast
\mathcal{V}^{T}_{l}.$  Let $\ten{T}$ and  $\ten{T}_{l}$  be the corresponding
tangent spaces of the fixed transformed  multi-rank  manifold at
$\mathcal{X}$ and $\mathcal{X}_{l},$ respectively.
Then
\begin{align*}
& (i)~\|\mathcal{U}_{l}\ast \mathcal{U}_{l}^{T}- \mathcal{U}\ast
\mathcal{U}^{T}\|\leq
\frac{\|\mathcal{X}_{l}-\mathcal{X}\|}{\sigma_{min}(\mathcal{X})},~
~~~~~~~~~~(ii)~\|\mathcal{V}_{l}\ast \mathcal{V}_{l}^{T}- \mathcal{V}\ast
\mathcal{V}^{T}\|\leq
\frac{\|\mathcal{X}_{l}-\mathcal{X}\|}{\sigma_{min}(\mathcal{X})},\\
& (iii)~\|\mathcal{U}_{l}\ast \mathcal{U}_{l}^{T}- \mathcal{U}\ast
\mathcal{U}^{T}\|_{F}\leq
\frac{\sqrt{2}\|\mathcal{X}_{l}-\mathcal{X}\|_{F}}{\sigma_{min}(\mathcal{X})},~~(iv)~\|\mathcal{V}_{l}\ast
\mathcal{V}_{l}^{T}- \mathcal{V}\ast \mathcal{V}^{T}\|_{F}\leq
\frac{\sqrt{2}\|\mathcal{X}_{l}-\mathcal{X}\|_{F}}{\sigma_{min}(\mathcal{X})},\\
& (v)~\|(\mathcal{I}-\mathcal{P}_{\ten{T}_{l}})\mathcal{X}\|_{F}\leq
\frac{\|\mathcal{X}_{l}-\mathcal{X}\|^{2}_{F}}{\sigma_{min}(\mathcal{X})},~~~~~~~~~~~~~~(vi)~\|\mathcal{P}_{\ten{T}_{l}}-\mathcal{P}_{\ten{T}}\|\leq
\frac{2\|\mathcal{X}_{l}-\mathcal{X}\|_{F}}{\sigma_{min}(\mathcal{X})}.
\end{align*}
\end{lemma}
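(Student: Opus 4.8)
The plan is to derive all six bounds from classical matrix subspace--perturbation estimates by passing to the transform domain. The map $\mathcal{A}\mapsto\overline{\mathcal{A}}=\bdiag(\widehat{\mathcal{A}})$ preserves the Frobenius inner product ($\langle\mathcal{A},\mathcal{B}\rangle=\langle\overline{\mathcal{A}},\overline{\mathcal{B}}\rangle$), sends $\ast$ to block--matrix multiplication, satisfies $\|\mathcal{A}\|=\|\overline{\mathcal{A}}\|$, and makes $\mathcal{U},\mathcal{V}$ act slice--wise as matrices with orthonormal columns; moreover the spectral norm of a block--diagonal matrix is the maximum of its blocks' spectral norms and its squared Frobenius norm is the sum of its blocks' squared Frobenius norms. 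Writing $\widehat{\mathcal{X}}^{(i)}=\widehat{\mathcal{U}}^{(i)}\widehat{\Sigma}^{(i)}(\widehat{\mathcal{V}}^{(i)})^{T}$ for the thin SVD of slice $i$ ($\widehat{\Sigma}^{(i)}$ invertible of order $r_{i}$) and likewise for $\mathcal{X}_{l}$, one has $\sigma_{min}(\mathcal{X})=\min_{i}\sigma_{min}(\widehat{\mathcal{X}}^{(i)})$, hence $\sigma_{min}(\widehat{\mathcal{X}}^{(i)})\ge\sigma_{min}(\mathcal{X})$ for every $i$; and by \eqref{prj1}, $\mathcal{P}_{\ten{T}_{l}}$ acts slice--wise as the tangent--space projector at $\widehat{\mathcal{X}}_{l}^{(i)}$. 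So it suffices to prove the matrix versions of (i)--(vi) on each slice and reassemble: for the spectral bounds (i), (ii), (vi) take the maximum over $i$ and bound $\sigma_{min}(\widehat{\mathcal{X}}^{(i)})$ below by $\sigma_{min}(\mathcal{X})$; for the Frobenius bounds (iii), (iv), (v) square, sum over $i$ using $\sum_{i}\|\widehat{\mathcal{X}}_{l}^{(i)}-\widehat{\mathcal{X}}^{(i)}\|_{F}^{2}=\|\mathcal{X}_{l}-\mathcal{X}\|_{F}^{2}$, and take a square root.

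At the matrix level (dropping the slice index, $X=U\Sigma V^{T}$, $X_{l}=U_{l}\Sigma_{l}V_{l}^{T}$), the core estimate is $\|(I-U_{l}U_{l}^{T})U\|\le\|X-X_{l}\|/\sigma_{min}(X)$ and $\|(I-V_{l}V_{l}^{T})V\|\le\|X-X_{l}\|/\sigma_{min}(X)$, with the same bounds in Frobenius norm; these follow from the identity
\begin{equation*}
(I-U_{l}U_{l}^{T})U=(I-U_{l}U_{l}^{T})(X-X_{l})V\Sigma^{-1}
\end{equation*}
(valid since $XV\Sigma^{-1}=U$ and $(I-U_{l}U_{l}^{T})X_{l}=0$) together with the mirror identity for $V$ obtained from $X^{T}=V\Sigma U^{T}$. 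For two orthogonal projectors $P,Q$ of equal rank $r$ one has $\|P-Q\|=\|(I-Q)P\|$ and, from $\|P-Q\|_{F}^{2}=2r-2\tr(PQ)=2\|(I-Q)P\|_{F}^{2}$, also $\|P-Q\|_{F}=\sqrt{2}\,\|(I-Q)P\|_{F}$; taking $(P,Q)=(UU^{T},U_{l}U_{l}^{T})$ and $(VV^{T},V_{l}V_{l}^{T})$ converts the core estimate into the matrix forms of (i)--(iv). Reassembling over slices gives (i)--(iv); taking the maximum (resp.\ the root of the sum of squares) over slices likewise yields the tensor statements $\|(\mathcal{I}_{dct}-\mathcal{U}_{l}\ast\mathcal{U}_{l}^{T})\ast\mathcal{U}\|\le\|\mathcal{X}-\mathcal{X}_{l}\|/\sigma_{min}(\mathcal{X})$ and its analogue for $\mathcal{V}$, which are used below.

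For (v), by \eqref{prj1} we have $(\mathcal{I}-\mathcal{P}_{\ten{T}_{l}})(\mathcal{X})=(\mathcal{I}_{dct}-\mathcal{U}_{l}\ast\mathcal{U}_{l}^{T})\ast\mathcal{X}\ast(\mathcal{I}_{dct}-\mathcal{V}_{l}\ast\mathcal{V}_{l}^{T})$; using $\mathcal{X}\ast\mathcal{V}\ast\mathcal{V}^{T}=\mathcal{X}$ on the right factor and $(\mathcal{I}_{dct}-\mathcal{U}_{l}\ast\mathcal{U}_{l}^{T})\ast\mathcal{X}_{l}=0$ (so $(\mathcal{I}_{dct}-\mathcal{U}_{l}\ast\mathcal{U}_{l}^{T})\ast\mathcal{X}=(\mathcal{I}_{dct}-\mathcal{U}_{l}\ast\mathcal{U}_{l}^{T})\ast(\mathcal{X}-\mathcal{X}_{l})$) on the left factor, this becomes
\begin{equation*}
(\mathcal{I}-\mathcal{P}_{\ten{T}_{l}})(\mathcal{X})=(\mathcal{I}_{dct}-\mathcal{U}_{l}\ast\mathcal{U}_{l}^{T})\ast(\mathcal{X}-\mathcal{X}_{l})\ast\mathcal{V}\ast\bigl[\mathcal{V}^{T}\ast(\mathcal{I}_{dct}-\mathcal{V}_{l}\ast\mathcal{V}_{l}^{T})\bigr],
\end{equation*}
whence $\|(\mathcal{I}-\mathcal{P}_{\ten{T}_{l}})(\mathcal{X})\|_{F}\le\|\mathcal{X}-\mathcal{X}_{l}\|_{F}\,\|(\mathcal{I}_{dct}-\mathcal{V}_{l}\ast\mathcal{V}_{l}^{T})\ast\mathcal{V}\|\le\|\mathcal{X}-\mathcal{X}_{l}\|_{F}\,\|\mathcal{X}-\mathcal{X}_{l}\|/\sigma_{min}(\mathcal{X})$ by the previous paragraph, and $\|\mathcal{X}-\mathcal{X}_{l}\|\le\|\mathcal{X}-\mathcal{X}_{l}\|_{F}$ finishes (v). For (vi), write $\mathcal{P}_{\ten{T}_{l}}-\mathcal{P}_{\ten{T}}=(\mathcal{I}-\mathcal{P}_{\ten{T}})-(\mathcal{I}-\mathcal{P}_{\ten{T}_{l}})$, expand both via \eqref{prj1}, and add/subtract a cross term to obtain
\begin{equation*}
(\mathcal{P}_{\ten{T}_{l}}-\mathcal{P}_{\ten{T}})(\mathcal{A})=(\mathcal{U}_{l}\ast\mathcal{U}_{l}^{T}-\mathcal{U}\ast\mathcal{U}^{T})\ast\mathcal{A}\ast(\mathcal{I}_{dct}-\mathcal{V}\ast\mathcal{V}^{T})+(\mathcal{I}_{dct}-\mathcal{U}_{l}\ast\mathcal{U}_{l}^{T})\ast\mathcal{A}\ast(\mathcal{V}_{l}\ast\mathcal{V}_{l}^{T}-\mathcal{V}\ast\mathcal{V}^{T});
\end{equation*}
taking Frobenius norms and invoking (i) and (ii) bounds this by $(\|\mathcal{U}_{l}\ast\mathcal{U}_{l}^{T}-\mathcal{U}\ast\mathcal{U}^{T}\|+\|\mathcal{V}_{l}\ast\mathcal{V}_{l}^{T}-\mathcal{V}\ast\mathcal{V}^{T}\|)\|\mathcal{A}\|_{F}\le\frac{2\|\mathcal{X}_{l}-\mathcal{X}\|}{\sigma_{min}(\mathcal{X})}\|\mathcal{A}\|_{F}\le\frac{2\|\mathcal{X}_{l}-\mathcal{X}\|_{F}}{\sigma_{min}(\mathcal{X})}\|\mathcal{A}\|_{F}$.

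The transform--domain bookkeeping and the equal--rank projector identities are routine; the one genuinely delicate point is the factorization used for (v). Estimating $(\mathcal{I}_{dct}-\mathcal{U}_{l}\ast\mathcal{U}_{l}^{T})\ast(\mathcal{X}-\mathcal{X}_{l})\ast(\mathcal{I}_{dct}-\mathcal{V}_{l}\ast\mathcal{V}_{l}^{T})$ directly gives only the linear bound $\|\mathcal{X}-\mathcal{X}_{l}\|_{F}$; one must first pull the right factor through $\mathcal{V}\ast\mathcal{V}^{T}$ so that the extra small quantity $\mathcal{V}^{T}\ast(\mathcal{I}_{dct}-\mathcal{V}_{l}\ast\mathcal{V}_{l}^{T})$, of spectral norm at most $\|\mathcal{X}-\mathcal{X}_{l}\|/\sigma_{min}(\mathcal{X})$, is created and supplies the quadratic dependence.
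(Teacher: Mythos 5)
Your proof is correct and follows essentially the same route as the paper's: pass to the block-diagonal transform domain, prove the subspace-perturbation bounds slice-by-slice and reassemble (max over slices for spectral norms, sum of squares for Frobenius norms), and use the same algebraic factorizations of $(\mathcal{I}-P_{\ten{T}_{l}})\mathcal{X}$ and $P_{\ten{T}_{l}}-P_{\ten{T}}$ for (v) and (vi). The only differences are that you derive the slice-wise estimate from scratch via $(I-U_{l}U_{l}^{T})U=(I-U_{l}U_{l}^{T})(X-X_{l})V\Sigma^{-1}$ and the equal-rank projector identities where the paper simply cites Lemma 4.1 of \cite{wei2}, and in (v) you extract the quadratically small factor from the right-hand side as $\mathcal{V}^{T}\ast(\mathcal{I}-\mathcal{V}_{l}\ast\mathcal{V}_{l}^{T})$ whereas the paper extracts it from the left-hand difference $\mathcal{U}\ast\mathcal{U}^{T}-\mathcal{U}_{l}\ast\mathcal{U}_{l}^{T}$ via (i) --- the two factorizations are equivalent.
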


\begin{lemma} \label{lem1}
Let $R_{\Omega}$ be defined as \eqref{main1}, $\mathcal{X},\mathcal{X}_{l},\ten{T}, \ten{T}_{l}$ be defined as in Lemma \ref{lem5}, and  $n=\max\{n_{1},n_{2},n_{3}\}$.
Assume that
\begin{align}
&  \|R_{\Omega}\|\leq \frac{10}{3}\beta \log n,\label{e1}\\
 &
\|P_{\ten{T}}-p^{-1}P_{\ten{T}}R_{\Omega}P_{\ten{T}}\|\leq \epsilon_{0},\label{e2}\\
&
   \frac{\|\mathcal{X}_{l}-\mathcal{X}\|_{F}}{\sigma_{min}(\mathcal{X})}\leq \frac{3p^{\frac{1}{2}}\varepsilon_{0}}{20\beta (1+\epsilon_{0})\log
n },\label{e3}
\end{align}
for some $0< \varepsilon_{0}< 1$ and $\beta>1.$ Then
\begin{align}
\|R_{\Omega}P_{\ten{T}_{l}}\|\leq \frac{10}{3}\beta
\log(n)(1+\epsilon_{0})p^{\frac{1}{2}},\label{e5}
\\\|P_{\ten{T}_{l}}-p^{-1}P_{\ten{T}_{l}}R_{\Omega}P_{\ten{T}_{l}}\|\leq
4\epsilon_{0}.\label{e6}
\end{align}
\end{lemma}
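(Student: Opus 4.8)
The plan is to transfer the bounds \eqref{e2} on the ``known'' tangent space $\ten{T}$ to the current tangent space $\ten{T}_l$ by writing $P_{\ten{T}_l} = P_{\ten{T}} + (P_{\ten{T}_l}-P_{\ten{T}})$ and controlling the perturbation through part $(vi)$ of Lemma \ref{lem5} together with the closeness assumption \eqref{e3}. First I would establish \eqref{e5}: decompose
\[
R_{\Omega}P_{\ten{T}_l} = R_{\Omega}P_{\ten{T}} + R_{\Omega}(P_{\ten{T}_l}-P_{\ten{T}}),
\]
bound the first term using $\|R_{\Omega}P_{\ten{T}}\|\le \|P_{\ten{T}} R_{\Omega} P_{\ten{T}}\|^{1/2}$ (since $R_\Omega$ is a sum of rank-one PSD sampling operators, $R_\Omega = R_\Omega^{1/2}R_\Omega^{1/2}$ in the appropriate sense) and then $\|P_{\ten{T}} R_{\Omega} P_{\ten{T}}\|\le p(1+\epsilon_0)$ from \eqref{e2}, giving $\|R_{\Omega}P_{\ten{T}}\|\le (p(1+\epsilon_0))^{1/2}$; actually it is cleaner to bound $\|R_\Omega P_{\ten T}\| \le \|R_\Omega\|^{1/2}\|R_\Omega^{1/2}P_{\ten T}\|$ and use $\|R_\Omega\|\le \frac{10}{3}\beta\log n$ from \eqref{e1}. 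For the second term use $\|R_{\Omega}(P_{\ten{T}_l}-P_{\ten{T}})\|\le \|R_{\Omega}\|\cdot\|P_{\ten{T}_l}-P_{\ten{T}}\|\le \frac{10}{3}\beta\log n\cdot\frac{2\|\mathcal X_l-\mathcal X\|_F}{\sigma_{\min}(\mathcal X)}$, and plug in \eqref{e3} so that this contribution is swallowed by the main term, leaving the stated constant $\frac{10}{3}\beta\log n\,(1+\epsilon_0)p^{1/2}$ after a routine numerical check of constants.

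Next I would prove \eqref{e6} by a telescoping/triangle-inequality argument. Write
\[
P_{\ten{T}_l} - p^{-1}P_{\ten{T}_l}R_{\Omega}P_{\ten{T}_l}
= \big(P_{\ten{T}_l}-P_{\ten{T}}\big) - p^{-1}\big(P_{\ten{T}_l}-P_{\ten{T}}\big)R_{\Omega}P_{\ten{T}_l}
- p^{-1}P_{\ten{T}}R_{\Omega}\big(P_{\ten{T}_l}-P_{\ten{T}}\big)
+ \big(P_{\ten{T}}-p^{-1}P_{\ten{T}}R_{\Omega}P_{\ten{T}}\big).
\]
The last term is $\le\epsilon_0$ by \eqref{e2}; the first term is $\le \frac{2\|\mathcal X_l-\mathcal X\|_F}{\sigma_{\min}(\mathcal X)}$ by Lemma \ref{lem5}$(vi)$; the middle two terms are each bounded by $p^{-1}\|R_{\Omega}P_{\ten{T}_l}\|$ (resp.\ $p^{-1}\|R_\Omega P_{\ten T}\|$) times $\|P_{\ten{T}_l}-P_{\ten{T}}\|$, and here I would substitute the just-proved bound \eqref{e5} for $\|R_{\Omega}P_{\ten{T}_l}\|$, giving a term of order $p^{-1/2}\beta\log n\cdot\frac{\|\mathcal X_l-\mathcal X\|_F}{\sigma_{\min}(\mathcal X)}$. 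Finally I would invoke \eqref{e3}, which is exactly calibrated so that each of these perturbation terms is at most $\epsilon_0$ (up to the numerical constant $3/(20)$ versus the $1+\epsilon_0$ denominator), and conclude that the sum of four terms is at most $4\epsilon_0$.

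The main obstacle is bookkeeping the numerical constants so that the three perturbation contributions in the decomposition of \eqref{e6} each collapse to at most $\epsilon_0$: this is precisely why the hypothesis \eqref{e3} carries the peculiar factor $\frac{3p^{1/2}\varepsilon_0}{20\beta(1+\epsilon_0)\log n}$, and one has to check that after using \eqref{e5} the $p^{1/2}$ in \eqref{e5} cancels the $p^{1/2}$ in \eqref{e3} and the $(1+\epsilon_0)$ and $\frac{10}{3}\beta\log n$ factors match up. A secondary technical point is justifying the operator-square-root bound $\|R_\Omega P_{\ten T}\|\le \|R_\Omega\|^{1/2}\|P_{\ten T}R_\Omega P_{\ten T}\|^{1/2}$ (or the analogous inequality) cleanly in the tensor setting; this follows because $R_\Omega$ defined in \eqref{s1} is self-adjoint and positive semidefinite as an operator on $\R^{n_1\times n_2\times n_3}$ with respect to $\langle\cdot,\cdot\rangle$, so $\|R_\Omega P_{\ten T}\|^2 = \|P_{\ten T}R_\Omega^2 P_{\ten T}\|\le \|R_\Omega\|\,\|P_{\ten T}R_\Omega P_{\ten T}\|$, and then \eqref{e2} controls the right-hand factor. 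Everything else is a routine triangle-inequality assembly.
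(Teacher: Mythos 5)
Your proposal is correct and follows essentially the same route as the paper: the bound $\|R_\Omega P_{\ten{T}}\|\leq\|R_\Omega\|^{1/2}\|P_{\ten{T}}R_\Omega P_{\ten{T}}\|^{1/2}$ plus a triangle inequality with Lemma \ref{lem5}$(vi)$ gives \eqref{e5}, and a four-term telescoping decomposition (yours differs from the paper's only in which of $P_{\ten{T}}$, $P_{\ten{T}_l}$ sits on each side of the two cross terms, which is immaterial) gives \eqref{e6} with each term at most $\epsilon_0$. The constants work out exactly as you anticipate, and your closing justification of the operator-square-root step via the self-adjointness and positive semidefiniteness of $R_\Omega$ is the same computation the paper performs at the level of inner products.
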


\begin{lemma}[Lemma 4.4 in \cite{wei2}]\label{wei4} Let $\rho_{1},\rho_{2}$ and $\gamma$ be
positive constants satisfying $\rho_{2}>\rho_{1}$. Define
\begin{equation*}
\gamma_{1}=\rho_{1}+\gamma, \gamma_{2}=(\rho_{2}-\rho_{1})\gamma,
~~\text{and}~~v=\frac{1}{2}(\gamma_{1}+\sqrt{\gamma_{1}^2+4\gamma_{2}}).
\end{equation*}
Let $\{c_{l}\}_{l\geq0}$ be a non-negative sequence satisfying
$c_{1}\leq vc_{0}$ and
\begin{align*}
c_{l+1}\leq \rho_{1}c_{l}+\rho_{2}\sum_{j=0}^{l-1}\gamma^{l-j}c_{j},
~\forall~ l\geq 1.
\end{align*}
Then if $\gamma_{1}+\gamma_{2}<1,$ we have $v<1$ and $c_{l+1}\leq
v^{l+1}c_{0}$.
\end{lemma}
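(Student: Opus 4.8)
The plan is to recognize $v$ as the relevant root of the quadratic attached to the recursion and then run a short induction. First I would observe that $v$ is, by construction, the unique positive root of $p(x):=x^{2}-\gamma_{1}x-\gamma_{2}=0$: the discriminant $\gamma_{1}^{2}+4\gamma_{2}$ is positive, and the product of the two roots equals $-\gamma_{2}<0$ (using $\rho_{2}>\rho_{1}$ and $\gamma>0$), so exactly one root is positive, namely $v=\tfrac{1}{2}(\gamma_{1}+\sqrt{\gamma_{1}^{2}+4\gamma_{2}})$. From $\gamma_{1}+\gamma_{2}<1$ we get $p(1)=1-\gamma_{1}-\gamma_{2}>0$ while $p(0)=-\gamma_{2}<0$; since $p$ is an upward parabola, its positive root must lie in $(0,1)$, which gives $v<1$. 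The same quadratic also supplies the comparison I will need: from $v^{2}=\gamma_{1}v+\gamma_{2}=(\rho_{1}+\gamma)v+(\rho_{2}-\rho_{1})\gamma>\gamma v$ we obtain $v>\gamma$, hence $\gamma/v<1$ (and similarly $v>\rho_{1}$).

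Next I would prove the claim $c_{l+1}\le v^{l+1}c_{0}$ for every $l\ge 0$ by strong induction. The case $l=0$ is exactly the hypothesis $c_{1}\le v c_{0}$. For the inductive step, fix $l\ge 1$ and assume $c_{j}\le v^{j}c_{0}$ for all $0\le j\le l$. Substituting into the recursion gives
$$ c_{l+1}\le \rho_{1}c_{l}+\rho_{2}\sum_{j=0}^{l-1}\gamma^{l-j}c_{j} \le \Bigl(\rho_{1}v^{l}+\rho_{2}\sum_{j=0}^{l-1}\gamma^{l-j}v^{j}\Bigr)c_{0}. $$
Dividing the bracketed quantity by $v^{l}$ and setting $k=l-j$ turns the sum into $\sum_{k=1}^{l}(\gamma/v)^{k}$, which, because $\gamma/v<1$, is bounded above by $\sum_{k\ge 1}(\gamma/v)^{k}=\gamma/(v-\gamma)$. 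Hence it suffices to verify $\rho_{1}+\rho_{2}\,\gamma/(v-\gamma)\le v$; multiplying through by $v-\gamma>0$, this is equivalent to $v^{2}-\gamma_{1}v-\gamma_{2}\ge 0$, which holds with equality by the definition of $v$. Therefore $\rho_{1}v^{l}+\rho_{2}\sum_{j=0}^{l-1}\gamma^{l-j}v^{j}\le v^{l+1}$, so $c_{l+1}\le v^{l+1}c_{0}$, closing the induction.

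The computation is elementary throughout; the only point that needs care is the bookkeeping in the geometric-sum step — specifically, establishing $v>\gamma$ so that the series $\sum_{k\ge 1}(\gamma/v)^{k}$ converges and the bound $\sum_{k=1}^{l}(\gamma/v)^{k}\le\gamma/(v-\gamma)$ is legitimate — together with checking that the hypothesis $c_{1}\le v c_{0}$ is precisely the base case the induction consumes. I do not expect any genuine obstacle here; since this is Lemma 4.4 of \cite{wei2}, one could alternatively just cite that reference, but the self-contained argument above is short enough to include if desired.
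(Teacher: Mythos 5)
Your proof is correct. The paper gives no proof of this lemma at all --- it simply imports it as Lemma 4.4 of the cited reference \cite{wei2} --- and your self-contained argument (identifying $v$ as the unique positive root of $x^{2}-\gamma_{1}x-\gamma_{2}$, deducing $v<1$ from $p(1)>0$ and $v>\gamma$ from the root equation, then closing the strong induction via the geometric-series bound $\sum_{k=1}^{l}(\gamma/v)^{k}\leq\gamma/(v-\gamma)$) is exactly the standard argument from that reference, so there is nothing to compare against within this paper.
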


In addition, we need to estimate $\alpha_{l}$ and $\beta_{l}$ in
Algorithm 4 with the restarting conditions in $(\ref{eq6})$. Their proof can
be found in Section \ref{seca}.

\begin{lemma}\label{lem2}
Assume that \eqref{e6} is satisfied.
When restarting occurs, $\beta_{l}=0,$ then the
stepsize $\alpha_{l}$ in Algorithm 2 can be bounded as
$$
\frac{1}{(1+4\epsilon_{0})}\leq \alpha_{l}=\frac{\|P_{\ten{T}_{l}}(\mathcal{G}_{l})\|_{F}^{2}}{\langle P_{\ten{T}_{l}}(\mathcal{G}_{l}),R_{\Omega}P_{\ten{T}_{l}}(\mathcal{G}_{l})\rangle }\leq \frac{1}{(1-4\epsilon_{0})p}.
$$
Moreover, the spectral norm of
$\|P_{\ten{T}_{l}}-\alpha_{l}P_{\ten{T}_{l}}R_{\Omega}P_{\ten{T}_{l}}\|$ can be
bounded as
$$
\|P_{\ten{T}_{l}}-\alpha_{l}P_{\ten{T}_{l}}R_{\Omega}P_{\ten{T}_{l}}\|\leq \frac{8\epsilon_{0}}{(1-4\epsilon_{0})}.
$$
\end{lemma}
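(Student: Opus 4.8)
The plan is a direct computation: use the restarting hypothesis $\beta_{l}=0$ to simplify the search direction, then invoke \eqref{e6} twice. First I would observe that when restarting occurs, line~3 of Algorithm~\ref{alg1} becomes $\mathcal{Q}_{l}=P_{\ten{T}_{l}}(\mathcal{G}_{l})$, so that, $P_{\ten{T}_{l}}$ being an orthogonal projection, $P_{\ten{T}_{l}}(\mathcal{Q}_{l})=P_{\ten{T}_{l}}(\mathcal{G}_{l})$. Substituting this into line~4 collapses the numerator of $\alpha_{l}$ to $\|P_{\ten{T}_{l}}(\mathcal{G}_{l})\|_{F}^{2}$ and the denominator to $\langle P_{\ten{T}_{l}}(\mathcal{G}_{l}),R_{\Omega}P_{\ten{T}_{l}}(\mathcal{G}_{l})\rangle$, which is the asserted formula for $\alpha_{l}$.

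Next, write $\mathcal{G}:=P_{\ten{T}_{l}}(\mathcal{G}_{l})\in\ten{T}_{l}$. Since $R_{\Omega}$, and hence $P_{\ten{T}_{l}}R_{\Omega}P_{\ten{T}_{l}}$ and $P_{\ten{T}_{l}}$, are self-adjoint, \eqref{e6} gives $\bigl|\langle\mathcal{G},(P_{\ten{T}_{l}}-p^{-1}P_{\ten{T}_{l}}R_{\Omega}P_{\ten{T}_{l}})\mathcal{G}\rangle\bigr|\le 4\epsilon_{0}\|\mathcal{G}\|_{F}^{2}$, and using $P_{\ten{T}_{l}}\mathcal{G}=\mathcal{G}$ this becomes $\bigl|\|\mathcal{G}\|_{F}^{2}-p^{-1}\langle\mathcal{G},R_{\Omega}\mathcal{G}\rangle\bigr|\le 4\epsilon_{0}\|\mathcal{G}\|_{F}^{2}$, i.e.
\[
p(1-4\epsilon_{0})\|\mathcal{G}\|_{F}^{2}\le\langle\mathcal{G},R_{\Omega}\mathcal{G}\rangle\le p(1+4\epsilon_{0})\|\mathcal{G}\|_{F}^{2}.
\]
Under the implicit assumption $\epsilon_{0}<\tfrac14$ the denominator is strictly positive (so $\alpha_{l}$ is well defined as long as $\mathcal{G}\neq0$), and dividing yields $\tfrac{1}{p(1+4\epsilon_{0})}\le\alpha_{l}\le\tfrac{1}{p(1-4\epsilon_{0})}$. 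This gives the claimed bounds (for the lower bound one also uses $p\le1$), and, more importantly for the next step, the tight two-sided control $\alpha_{l}p\in[\tfrac{1}{1+4\epsilon_{0}},\tfrac{1}{1-4\epsilon_{0}}]$.

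For the operator-norm estimate I would start from the identity
\[
P_{\ten{T}_{l}}-\alpha_{l}P_{\ten{T}_{l}}R_{\Omega}P_{\ten{T}_{l}}=(1-\alpha_{l}p)P_{\ten{T}_{l}}+\alpha_{l}p\bigl(P_{\ten{T}_{l}}-p^{-1}P_{\ten{T}_{l}}R_{\Omega}P_{\ten{T}_{l}}\bigr),
\]
take norms, and apply the triangle inequality together with $\|P_{\ten{T}_{l}}\|=1$ and \eqref{e6} to get $\|P_{\ten{T}_{l}}-\alpha_{l}P_{\ten{T}_{l}}R_{\Omega}P_{\ten{T}_{l}}\|\le|1-\alpha_{l}p|+4\epsilon_{0}\,\alpha_{l}p$. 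The two-sided control of $\alpha_{l}p$ then forces both $|1-\alpha_{l}p|\le\tfrac{4\epsilon_{0}}{1-4\epsilon_{0}}$ and $4\epsilon_{0}\,\alpha_{l}p\le\tfrac{4\epsilon_{0}}{1-4\epsilon_{0}}$, and adding the two bounds gives $\tfrac{8\epsilon_{0}}{1-4\epsilon_{0}}$.

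I do not anticipate a real obstacle: the argument is elementary algebra. The only care points are (i) passing from the operator-norm bound \eqref{e6} to the two-sided quadratic-form bound, which relies on self-adjointness (and, implicitly, positive semidefiniteness) of $R_{\Omega}$ and of the projections; (ii) tracking the factor $p$ so that the lower bound on $\alpha_{l}$ is strong enough to pin $\alpha_{l}p$ inside the short interval $[\tfrac{1}{1+4\epsilon_{0}},\tfrac{1}{1-4\epsilon_{0}}]$, which is what makes $|1-\alpha_{l}p|$ small in the final step; and (iii) keeping in mind the standing requirement $\epsilon_{0}<\tfrac14$ so that all denominators stay positive.
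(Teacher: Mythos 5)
Your proposal is correct and follows essentially the same route as the paper: both derive the two-sided bound $\alpha_{l}p\in[\tfrac{1}{1+4\epsilon_{0}},\tfrac{1}{1-4\epsilon_{0}}]$ from \eqref{e6} applied to the quadratic form in $P_{\ten{T}_{l}}(\mathcal{G}_{l})$, and then split $P_{\ten{T}_{l}}-\alpha_{l}P_{\ten{T}_{l}}R_{\Omega}P_{\ten{T}_{l}}$ by the triangle inequality to reach $\tfrac{8\epsilon_{0}}{1-4\epsilon_{0}}$. The only differences are cosmetic (the paper splits off $(\alpha_{l}-p^{-1})P_{\ten{T}_{l}}R_{\Omega}P_{\ten{T}_{l}}$ rather than your $(1-\alpha_{l}p)P_{\ten{T}_{l}}$, and proves the two sides of the denominator bound separately), so no further comparison is needed.
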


\begin{lemma} \label{lemn1}
Assume that \eqref{e6} is satisfied. When restarting dose not occur, $\beta_{l}\neq 0,$ then we
have
\begin{equation*}
|\beta_{l}|\leq \epsilon_{\beta} ~~~{and}~~~ |\alpha_{l}\cdot
p-1|\leq \epsilon_{\alpha},
\end{equation*}
where
$$\epsilon_{\beta}=\frac{4k_{2}\epsilon_{0}}{(1-4\epsilon_{0})}+\frac{k_{1}k_{2}}{(1-4\epsilon_{0})},~~
\epsilon_{\alpha}=\frac{4\epsilon_{0}}{(1-4\epsilon_{0})-k_{1}(1+4\epsilon_{0})}.$$
Moreover, the spectral norm of $\|P_{\ten{T}_{l}}-\alpha_{l}
P_{\ten{T}_{l}}R_{\Omega}P_{\ten{T}_{l}}\|$ can be bounded as
$$\|P_{\ten{T}_{l}}-\alpha_{l}
P_{\ten{T}_{l}}R_{\Omega}P_{\ten{T}_{l}}\|\leq
4\epsilon_{0}+\epsilon_{\alpha}(1+4\epsilon_{0}).$$
\end{lemma}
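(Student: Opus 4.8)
The plan is to carry out all estimates inside the tangent space $\ten{T}_l$, viewing $P_{\ten{T}_l}R_{\Omega}P_{\ten{T}_l}$ as a small perturbation of $p$ times the identity on $\ten{T}_l$. Rewriting \eqref{e6} as $\|P_{\ten{T}_l}R_{\Omega}P_{\ten{T}_l}-pP_{\ten{T}_l}\|\le 4\epsilon_0 p$ and pairing with an arbitrary $\ten{Z}\in\ten{T}_l$ yields, for $\epsilon_0<1/4$,
$$
(1-4\epsilon_0)\,p\,\|\ten{Z}\|_F^2\le\langle\ten{Z},R_{\Omega}\ten{Z}\rangle\le(1+4\epsilon_0)\,p\,\|\ten{Z}\|_F^2,
$$
as well as $\|P_{\ten{T}_l}R_{\Omega}P_{\ten{T}_l}\|\le(1+4\epsilon_0)p$. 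Put $g=P_{\ten{T}_l}(\mathcal{G}_l)$ and $q=P_{\ten{T}_l}(\mathcal{Q}_{l-1})$. Since restarting does not occur, both inequalities in \eqref{eq6} hold, namely $|\langle g,q\rangle|\le k_1\|g\|_F\|q\|_F$ and $\|g\|_F\le k_2\|q\|_F$. In particular the quantities $\langle q,R_{\Omega}q\rangle$ and $\langle\mathcal{Q}_l,R_{\Omega}\mathcal{Q}_l\rangle$ are strictly positive, so $\beta_l$ and $\alpha_l$ are well defined.

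For $\beta_l$ I will split $\langle g,R_{\Omega}q\rangle=p\,\langle g,q\rangle+\langle g,(P_{\ten{T}_l}R_{\Omega}P_{\ten{T}_l}-pP_{\ten{T}_l})q\rangle$, bound the first term by $p\,k_1\|g\|_F\|q\|_F$ using the first restart condition and the second by $4\epsilon_0\,p\,\|g\|_F\|q\|_F$ using \eqref{e6}, so that $|\langle g,R_{\Omega}q\rangle|\le(k_1+4\epsilon_0)\,p\,\|g\|_F\|q\|_F$; dividing by $\langle q,R_{\Omega}q\rangle\ge(1-4\epsilon_0)\,p\,\|q\|_F^2$ gives $|\beta_l|\le\frac{k_1+4\epsilon_0}{1-4\epsilon_0}\cdot\frac{\|g\|_F}{\|q\|_F}$, and the second restart condition $\|g\|_F\le k_2\|q\|_F$ turns this into $|\beta_l|\le\epsilon_\beta$. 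Retaining the factor $\|g\|_F/\|q\|_F$ one also reads off $|\beta_l|\,\|q\|_F\le\frac{k_1+4\epsilon_0}{1-4\epsilon_0}\,\|g\|_F$, which is what will control the cross terms in the $\alpha_l$ estimate.

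For $\alpha_l$ the key point will be that $\beta_l$ is chosen precisely so that $\mathcal{Q}_l$ is $R_{\Omega}$-conjugate to $q$: $\langle\mathcal{Q}_l,R_{\Omega}q\rangle=\langle g,R_{\Omega}q\rangle+\beta_l\langle q,R_{\Omega}q\rangle=0$. Hence the denominator of $\alpha_l$ collapses to $\langle\mathcal{Q}_l,R_{\Omega}\mathcal{Q}_l\rangle=\langle g,R_{\Omega}\mathcal{Q}_l\rangle$, while the numerator is $\langle g,\mathcal{Q}_l\rangle$ (as $\mathcal{Q}_l\in\ten{T}_l$), so that
$$
\alpha_l\,p-1=\frac{p\,\langle g,\mathcal{Q}_l\rangle-\langle g,R_{\Omega}\mathcal{Q}_l\rangle}{\langle\mathcal{Q}_l,R_{\Omega}\mathcal{Q}_l\rangle}=\frac{-\,\langle g,(P_{\ten{T}_l}R_{\Omega}P_{\ten{T}_l}-pP_{\ten{T}_l})\mathcal{Q}_l\rangle}{\langle\mathcal{Q}_l,R_{\Omega}\mathcal{Q}_l\rangle}.
$$
I will bound the numerator by $4\epsilon_0\,p\,\|g\|_F\|\mathcal{Q}_l\|_F$ via \eqref{e6}, the denominator below by $(1-4\epsilon_0)\,p\,\|\mathcal{Q}_l\|_F^2$, and remove the leftover power of $\|\mathcal{Q}_l\|_F$ either through $\|\mathcal{Q}_l\|_F\ge\|g\|_F-|\beta_l|\,\|q\|_F\ge(1-\tfrac{k_1+4\epsilon_0}{1-4\epsilon_0})\|g\|_F$, or, more sharply, by writing the denominator as $\langle g,R_{\Omega}g\rangle-\langle g,R_{\Omega}q\rangle^2/\langle q,R_{\Omega}q\rangle$ and estimating each piece directly with \eqref{e6} and the first restart condition; collecting terms then gives $|\alpha_l\,p-1|\le\epsilon_\alpha$. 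Finally, for the spectral-norm bound I will use the decomposition
$$
P_{\ten{T}_l}-\alpha_l\,P_{\ten{T}_l}R_{\Omega}P_{\ten{T}_l}=\bigl(P_{\ten{T}_l}-p^{-1}P_{\ten{T}_l}R_{\Omega}P_{\ten{T}_l}\bigr)+(p^{-1}-\alpha_l)\,P_{\ten{T}_l}R_{\Omega}P_{\ten{T}_l},
$$
bounding the first summand by $4\epsilon_0$ through \eqref{e6} and the second by $|1-\alpha_l\,p|\,p^{-1}\|P_{\ten{T}_l}R_{\Omega}P_{\ten{T}_l}\|\le\epsilon_\alpha(1+4\epsilon_0)$, which sums to $4\epsilon_0+\epsilon_\alpha(1+4\epsilon_0)$.

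The main obstacle I anticipate is bookkeeping rather than anything conceptual: extracting the stated closed forms of $\epsilon_\beta$ and especially $\epsilon_\alpha$ requires being disciplined about which of the two restart conditions is invoked at each step, and in particular the bound on $\alpha_l$ must come out independent of $k_2$ — which is exactly why one should simplify $\alpha_l$ through the conjugacy identity $\langle\mathcal{Q}_l,R_{\Omega}q\rangle=0$ and keep the $\|\mathcal{Q}_l\|_F$ (equivalently $\|g\|_F$) factors paired so they cancel, rather than bounding the numerator and denominator of the raw quotient in isolation. One must also check that the choices $k_1=0.1$, $k_2=1$ together with a small enough $\epsilon_0$ keep $1-4\epsilon_0$, $(1-4\epsilon_0)-k_1(1+4\epsilon_0)$, and all intermediate denominators strictly positive, so that every division above is legitimate.
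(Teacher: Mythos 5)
Your strategy coincides with the paper's for two of the three claims: the bound on $\beta_l$ (split $\langle g,R_\Omega q\rangle$ into $p\langle g,q\rangle$ plus the perturbation, use the first restart condition on the former and \eqref{e6} on the latter, then invoke $\|g\|_F\le k_2\|q\|_F$) and the final operator-norm decomposition $P_{\ten{T}_l}-\alpha_l P_{\ten{T}_l}R_\Omega P_{\ten{T}_l}=(P_{\ten{T}_l}-p^{-1}P_{\ten{T}_l}R_\Omega P_{\ten{T}_l})+(p^{-1}-\alpha_l)P_{\ten{T}_l}R_\Omega P_{\ten{T}_l}$ are exactly what the paper does, and the conjugacy identity $\langle \mathcal{Q}_l,R_\Omega q\rangle=0$ you use to reduce $\alpha_l p-1$ to a single perturbation term is implicitly the same identity the paper uses when it writes $\alpha_l=p^{-1}+\langle g,(P_{\ten{T}_l}-p^{-1}P_{\ten{T}_l}R_\Omega P_{\ten{T}_l})\mathcal{Q}_l\rangle/\langle\mathcal{Q}_l,R_\Omega\mathcal{Q}_l\rangle$.

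The one real issue is the step where you eliminate the ratio $\|g\|_F/\|\mathcal{Q}_l\|_F$: neither of your two proposed mechanisms produces the stated $\epsilon_\alpha$. The triangle-inequality route gives $\|\mathcal{Q}_l\|_F\ge\bigl(1-\tfrac{k_1+4\epsilon_0}{1-4\epsilon_0}\bigr)\|g\|_F$ and hence $|\alpha_l p-1|\le \tfrac{4\epsilon_0}{1-8\epsilon_0-k_1}$; the exact expansion $\langle\mathcal{Q}_l,R_\Omega\mathcal{Q}_l\rangle=\langle g,R_\Omega g\rangle-\langle g,R_\Omega q\rangle^2/\langle q,R_\Omega q\rangle$, once the numerator is also expressed in terms of $\|g\|_F$, collapses to the same constant because the factor $(1+k_1)$ cancels. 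Since $1-8\epsilon_0-k_1\le(1-4\epsilon_0)-k_1(1+4\epsilon_0)$ for $k_1\le 1$, both routes are strictly weaker than the claimed $\epsilon_\alpha=\tfrac{4\epsilon_0}{(1-4\epsilon_0)-k_1(1+4\epsilon_0)}$. The loss comes from discarding the first restart condition on the cross term: bounding $|\beta_l|\,\|q\|_F$ uses only $\|q\|_F\|g\|_F$ where the paper keeps $|\langle g,q\rangle|\le k_1\|g\|_F\|q\|_F$. The paper's fix is to lower-bound the inner product rather than the norm: with the cruder estimate $|\beta_l|\le\tfrac{(1+4\epsilon_0)\|g\|_F}{(1-4\epsilon_0)\|q\|_F}$ one gets $|\beta_l\langle g,q\rangle|\le\tfrac{k_1(1+4\epsilon_0)}{1-4\epsilon_0}\|g\|_F^2$, hence $|\langle g,\mathcal{Q}_l\rangle|\ge\bigl(1-\tfrac{k_1(1+4\epsilon_0)}{1-4\epsilon_0}\bigr)\|g\|_F^2$, and Cauchy--Schwarz then yields $\|g\|_F\le\tfrac{1-4\epsilon_0}{(1-4\epsilon_0)-k_1(1+4\epsilon_0)}\|\mathcal{Q}_l\|_F$, which inserted into $\tfrac{4\epsilon_0\|g\|_F}{(1-4\epsilon_0)\|\mathcal{Q}_l\|_F}$ gives exactly $\epsilon_\alpha$. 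Your argument as written proves the lemma only with a slightly larger $\epsilon_\alpha$, which would propagate into the constants $\gamma_1,\gamma_2$ of Theorem \ref{themm}; substituting the Cauchy--Schwarz step closes the gap.
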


With the above tools in hand, we can get one of the main results
of this paper.

\begin{theorem} \label{themm}
Suppose that \eqref{e1}-\eqref{e3} are satisfied,
where $\beta>1,$ $k_1,k_2$ are defined as in \eqref{eq6}, and $\epsilon_{0}$ is a positive numerical
constant satisfying $\gamma_{1}+\gamma_{2}<1$,
where
\begin{equation*}
\gamma_{1}=\frac{18\epsilon_{0}-10k_{1}\epsilon_{0}(1+4\epsilon_{0})}
{(1-4\epsilon_{0})-k_{1}(1+4\epsilon_{0})}+\frac{4k_{2}\epsilon_{0}+k_{1}k_{2}}{1-4\epsilon_{0}},~
\gamma_{2}=\frac{8k_{2}\epsilon_{0}+2k_{1}k_{2}}{1-4\epsilon_{0}}.
\end{equation*}
Then we have
$v_{cg}=\frac{1}{2}(\gamma_{1}+\sqrt{\gamma_{1}^{2}+4\gamma_{2}})<1$
and the iterates $\mathcal{X}_{l}$ generated by Algorithm 2
satisfy
$$\|\mathcal{X}_{l}-\mathcal{X}\|_{F}\leq v_{cg}^{l}\|\mathcal{X}_{0}-\mathcal{X}\|_{F}.$$
When $k_{1}=k_{2}=0,$ $v_{cg}$ is reduced to $\frac{18\epsilon_{0}}{1-4\epsilon_{0}}<1$.
On the other hand, we have $lim_{\epsilon_{0}\rightarrow
0}(\gamma_{1}+\gamma_{2})=3k_{1}k_{2}.$ So if
$k_{1}k_{2}<\frac{1}{3},~\gamma_{1}+\gamma_{2}$ can be less than one
when $\epsilon_{0}$ is small. In particular, when $k_{1}=0.1$ and
$k_{2}=1,$ a sufficient condition for $\gamma_{1}+\gamma_{2}<1$ is
$\epsilon_{0}\leq 0.01$.
\end{theorem}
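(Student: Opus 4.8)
The plan is to turn the statement into a single scalar recursion to which Lemma~\ref{wei4} applies. Write $c_l:=\|\mathcal{X}_l-\mathcal{X}\|_F$ for the error of the $l$-th iterate. Line~6 of Algorithm~\ref{alg1} returns the next iterate as $H_{r}(\mathcal{W}_l)$, where $H_{r}$ is the metric projection onto $\ten{M}_{\textbf{r}}$ (Proposition~\ref{prop3}), i.e. the best transformed‑multi‑rank‑$\textbf{r}$ approximation of $\mathcal{W}_l$; since $\rank_{t}(\mathcal{X})=\textbf{r}$, this gives $\|H_{r}(\mathcal{W}_l)-\mathcal{W}_l\|_F\le\|\mathcal{X}-\mathcal{W}_l\|_F$ and hence, by the triangle inequality, $c_{l+1}\le 2\|\mathcal{W}_l-\mathcal{X}\|_F$. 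Everything therefore reduces to estimating $\|\mathcal{W}_l-\mathcal{X}\|_F$, where $\mathcal{W}_l=\mathcal{X}_l+\alpha_l\mathcal{Q}_l$ and $\mathcal{Q}_l=P_{\ten{T}_{l}}(\mathcal{G}_l)+\beta_l P_{\ten{T}_{l}}(\mathcal{Q}_{l-1})$ with $\mathcal{G}_l=R_{\Omega}(\mathcal{X}-\mathcal{X}_l)$.

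Next I would decompose $\mathcal{W}_l-\mathcal{X}$. Using $(\mathcal{I}-P_{\ten{T}_{l}})\mathcal{X}_l=0$ (because $\mathcal{X}_l\in\ten{T}_{l}$), one obtains
\begin{align*}
\mathcal{W}_l-\mathcal{X}=\bigl(P_{\ten{T}_{l}}-\alpha_l P_{\ten{T}_{l}}R_{\Omega}P_{\ten{T}_{l}}\bigr)(\mathcal{X}_l-\mathcal{X})-(\mathcal{I}-P_{\ten{T}_{l}})\mathcal{X}+\alpha_l P_{\ten{T}_{l}}R_{\Omega}(\mathcal{I}-P_{\ten{T}_{l}})\mathcal{X}+\alpha_l\beta_l P_{\ten{T}_{l}}(\mathcal{Q}_{l-1}).
\end{align*}
The four terms are bounded as follows. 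The first is at most $\|P_{\ten{T}_{l}}-\alpha_l P_{\ten{T}_{l}}R_{\Omega}P_{\ten{T}_{l}}\|\,c_l$, where the operator norm is controlled by Lemma~\ref{lem2} when a restart occurs ($\beta_l=0$) and by Lemma~\ref{lemn1} otherwise. The second and third terms both contain $(\mathcal{I}-P_{\ten{T}_{l}})\mathcal{X}$, which by Lemma~\ref{lem5}(v) is at most $c_l^2/\sigma_{min}(\mathcal{X})$; using the adjoint identity $\|P_{\ten{T}_{l}}R_{\Omega}\|=\|R_{\Omega}P_{\ten{T}_{l}}\|$ together with \eqref{e5}, the bounds on $\alpha_l$ from Lemmas~\ref{lem2}--\ref{lemn1}, and hypothesis \eqref{e3}, this quadratic quantity turns into a small multiple of $\epsilon_0 c_l$. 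The fourth term is the conjugate‑gradient history term: it vanishes at any restart step, and otherwise, writing $\mathcal{Q}_{l-1}=\alpha_{l-1}^{-1}(\mathcal{W}_{l-1}-\mathcal{X}_{l-1})$ from line~5 and using $|\beta_l|\le\epsilon_\beta$ with $\alpha_l/\alpha_{l-1}$ bounded by a constant close to $1$ (both from Lemmas~\ref{lem2}--\ref{lemn1}), it is at most a constant times $\epsilon_\beta\bigl(\|\mathcal{W}_{l-1}-\mathcal{X}\|_F+c_{l-1}\bigr)$. Unrolling this last bound recursively produces the geometric history sum $\sum_{j=0}^{l-1}\epsilon_\beta^{\,l-j}c_j$.

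Collecting the four estimates and multiplying by the factor $2$ coming from the retraction gives a recursion of exactly the shape required by Lemma~\ref{wei4},
\begin{align*}
c_{l+1}\le\rho_1 c_l+\rho_2\sum_{j=0}^{l-1}\gamma^{\,l-j}c_j,\qquad\gamma=\epsilon_\beta,
\end{align*}
with $\rho_1,\rho_2$ chosen so that $\gamma_1=\rho_1+\gamma$ and $\gamma_2=(\rho_2-\rho_1)\gamma$ reproduce the formulas in the statement (in particular $\rho_2-\rho_1=2$, so $\gamma_2=2\epsilon_\beta=\frac{8k_2\epsilon_0+2k_1k_2}{1-4\epsilon_0}$, and when $k_1=k_2=0$ one has $\epsilon_\beta=0$, so $\gamma=\gamma_2=0$ and $\rho_1=\gamma_1=v_{cg}=\frac{18\epsilon_0}{1-4\epsilon_0}$, the pure Riemannian‑gradient rate). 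For the base case $l=0$, the initialization $\mathcal{Q}_{-1}=0$, $\beta_0=0$ removes the history term and the previous step shows $c_1\le 2\|\mathcal{W}_0-\mathcal{X}\|_F\le\frac{18\epsilon_0}{1-4\epsilon_0}c_0\le v_{cg}c_0$, which is the hypothesis $c_1\le v c_0$ of Lemma~\ref{wei4}. Since $\gamma_1+\gamma_2<1$ by assumption, Lemma~\ref{wei4} then yields $v_{cg}=\tfrac12(\gamma_1+\sqrt{\gamma_1^{2}+4\gamma_2})<1$ and $c_{l}\le v_{cg}^{\,l}c_0$, i.e. $\|\mathcal{X}_l-\mathcal{X}\|_F\le v_{cg}^{\,l}\|\mathcal{X}_0-\mathcal{X}\|_F$; the remarks on $\lim_{\epsilon_0\to0}(\gamma_1+\gamma_2)=3k_1k_2$ and on $\epsilon_0\le 0.01$ for $k_1=0.1,\,k_2=1$ follow by direct substitution.

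Two points need care. First, \eqref{e2}--\eqref{e3}, and hence Lemmas~\ref{lem1}--\ref{lemn1}, must hold not only at $\mathcal{X}_0$ but at every iterate; this is maintained by the same induction, because $c_{l+1}\le v_{cg}c_l\le c_l$ keeps $\|\mathcal{X}_l-\mathcal{X}\|_F/\sigma_{min}(\mathcal{X})$ below its initial value, so \eqref{e3} propagates and Lemma~\ref{lem1} keeps supplying \eqref{e5}--\eqref{e6} and thus the hypotheses of Lemmas~\ref{lem2}--\ref{lemn1}. Second, and this is the real obstacle, the conjugate‑gradient term $\alpha_l\beta_l P_{\ten{T}_{l}}(\mathcal{Q}_{l-1})$ couples the current step to the entire past trajectory; extracting the clean geometric decay $\epsilon_\beta^{\,l-j}$ from it — and, just as importantly, handling the restart/non‑restart dichotomy uniformly — is precisely where the restarting conditions \eqref{eq6} are indispensable, since it is they that force $\epsilon_\beta$ and $\epsilon_\alpha$ in Lemma~\ref{lemn1} to be small enough for $\gamma_1+\gamma_2<1$.
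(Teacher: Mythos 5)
Your proposal is correct and follows essentially the same route as the paper: the factor-of-two quasi-optimality of the $t_c$-SVD truncation, the identical four-term decomposition of $\mathcal{W}_l-\mathcal{X}$ bounded via Lemmas~\ref{lem5}, \ref{lem1}, \ref{lem2} and \ref{lemn1}, an induction that propagates \eqref{e3} to every iterate, and the final appeal to Lemma~\ref{wei4} with the base case supplied by the restart initialization. The only (harmless) deviation is in the history term $I_4$, which the paper unrolls as $\beta_{l}P_{\ten{T}_{l}}(\mathcal{Q}_{l-1})=\sum_{j}(\Pi_{i=j+1}^{l}\beta_{i})(\Pi_{k=j}^{l-1}P_{\ten{T}_{k}})(\mathcal{G}_{j})$ and bounds through the past gradients, whereas you recurse through $\mathcal{Q}_{l-1}=\alpha_{l-1}^{-1}(\mathcal{W}_{l-1}-\mathcal{X}_{l-1})$; both produce the same geometric sum $\sum_{j}\epsilon_{\beta}^{\,l-j}c_{j}$ with matching leading-order constants.
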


\begin{proof}
We  prove the results by induction.  Firstly, assume that for all
$j\leq l$, we have
\begin{align*}
\frac{\|\mathcal{X}_{j}-\mathcal{X}\|_{F}}{\sigma_{min}(\mathcal{X})}\leq
\frac{3p^{\frac{1}{2}}\varepsilon_{0}}{20\beta \log
n(1+\epsilon_{0}) }.
\end{align*}
Since  \eqref{e1}-\eqref{e3} are satisfied, Lemma
\ref{lem1} implies,
\begin{align*}
\|R_{\Omega}P_{\ten{T}_{j}}\|\leq \frac{10}{3}\beta
\log(n)(1+\epsilon_{0})p^{\frac{1}{2}}~~\text{and}~~
\|P_{\ten{T}_{j}}-p^{-1}P_{\ten{T}_{j}}R_{\Omega}P_{\ten{T}_{j}}\|\leq 4\epsilon_{0}.
\end{align*}
Thus the assumptions in Lemma \ref{lemn1} are satisfied for all $j\leq l$.

For the case $k=l+1$.
Noting that
$\mathcal{W}_{l}= \mathcal{X}_{l}+\alpha_{l}\mathcal{Q}_l$ in Algorithm 4, and  $\mathcal{X}_{l+1}=\ten{H}_{r}(\mathcal{W}_{l})$,
there holds
$$\|\mathcal{X}_{l}-\mathcal{X}\|_{F}\leq \|\mathcal{X}_{l}-\mathcal{W}_{l}\|_{F}+\|\mathcal{W}_{l}-\mathcal{X}\|_{F}\leq 2\|\mathcal{W}_{l}-\mathcal{X}\|_{F}.$$
 It follows that
\begin{align}
&\|\mathcal{X}_{l+1}-\mathcal{X}\|_{F} \leq
2\|\mathcal{X}_{l}+\alpha_{l}\mathcal{Q}_{l}-\mathcal{X}\|_{F}\nonumber\\
=&~2\|\mathcal{X}_{l}-\mathcal{X}-\alpha_{l}P_{\ten{T}_{l}}
R_{\Omega}(\mathcal{X}_{l}-\mathcal{X})+\alpha_{l}\beta_{l}P_{\ten{T}_{l}}(\mathcal{Q}_{l-1})\|_{F} \nonumber\\
= &~
 2\|(P_{\ten{T}_{l}}-\alpha_{l}P_{\ten{T}_{l}}R_{\Omega}P_{\ten{T}_{l}})(\mathcal{X}_{l}-\mathcal{X})+
 (\mathcal{I}-P_{\ten{T}_{l}})(\mathcal{X}_{l}-\mathcal{X})
 -\alpha_{l}P_{\ten{T}_{l}}R_{\Omega}(\mathcal{I}-P_{\ten{T}_{l}})(\mathcal{X}_{l}-\mathcal{X})\|_{F}\nonumber\\
 \leq &~ 2\|(P_{\ten{T}_{l}}-\alpha_{l}P_{\ten{T}_{l}}R_{\Omega}P_{\ten{T}_{l}})(\mathcal{X}_{l}-\mathcal{X})\|_{F}
 +2\|(\mathcal{I}-P_{\ten{T}_{l}})(\mathcal{X}_{l}-\mathcal{X})\|_{F}\nonumber\\
 &~+2\|\alpha_{l}P_{\ten{T}_{l}}R_{\Omega}(\mathcal{I}-P_{\ten{T}_{l}})(\mathcal{X}_{l}-\mathcal{X})\|_{F}+2|\alpha_{l}||\beta_{l}|
 \|P_{\ten{T}_{l}}(P_{l-1})\|_{F}\nonumber\\
 :=&~I_{1}+I_{2}+I_{3}+I_{4}.\label{ne1}
\end{align}
When one of the conditions in \eqref{eq6} is violated, and the restarting occurs, then $\beta_{l}$ is set $0$. In this case,
Lemmas \ref{lem5} and \ref{lem2} imply that
\begin{align*}
I_{1}&=2\|(P_{\ten{T}_{l}}-\alpha_{l}P_{\ten{T}_{l}}R_{\Omega}P_{\ten{T}_{l}})(\mathcal{X}_{l}-\mathcal{X})\|_{F}\leq
\frac{16\epsilon_{0}}{(1-4\epsilon_{0})}\|\mathcal{X}_{l}-\mathcal{X}\|_{F},\\
I_{2}&=2\|(\mathcal{I}-P_{\ten{T}_{l}})(\mathcal{X}_{l}-\mathcal{X})\|_{F}\leq\frac{\epsilon_{0}}{1-4\epsilon_{0}},\\
I_{3}&=2\|\alpha_{l}P_{\ten{T}_{l}}R_{\Omega}(\mathcal{I}-P_{\ten{T}_{l}})(\mathcal{X}_{l}-\mathcal{X})\|_{F}\leq \frac{\epsilon_{0}}{1-4\epsilon_{0}},\\
I_{4}&=0.
\end{align*}
Therefore, there holds
\begin{align*}
\|\mathcal{X}_{l+1}-\mathcal{X}\|_{F}\leq
\frac{18\epsilon_{0}}{1-4\epsilon_{0}}\|\mathcal{X}_{l}-\mathcal{X}\|_{F}\leq\big(\frac{18\epsilon_{0}}{1-4\epsilon_{0}}\big)^{l}\|\mathcal{X}_{0}-\mathcal{X}\|_{F}.
\end{align*}
When the conditions in \eqref{eq6} are satisfied, and the restarting does not occur and  the following results can be derived. For $I_{1}$,  by  Lemma \ref{lem2}, we have
\begin{equation*}
I_{1}=2\|(P_{\ten{T}_{l}}-\alpha_{l}P_{\ten{T}_{l}}R_{\Omega}P_{\ten{T}_{l}})(\mathcal{X}_{l}-\mathcal{X})\|_{F}\leq
(8\epsilon_{0}+2\epsilon_{\alpha}(1+2\epsilon_{0})\|\mathcal{X}_{l}-\mathcal{X}\|_{F}.
\end{equation*}
For $I_{2},$ noting that $P_{\ten{T}_{l}(\mathcal{X}_{l})}=\mathcal{X}_{l},$
 by the fifth inequality in Lemma \ref{lem5}, we have
\begin{align*}
I_{2}
=&2\|(\mathcal{I}-P_{\ten{T}_{l}})(\mathcal{X}_{l}-\mathcal{X})\|_{F}=2\|(\mathcal{I}-P_{\ten{T}_{l}})(\mathcal{X})\|_{F}
\leq\frac{\|\mathcal{X}_{l}-\mathcal{X}\|_{F}^{2}}{\sigma_{min}(\mathcal{X})}\\
\leq &\frac{3p^{\frac{1}{2}}\varepsilon_{0}}{10\beta \log
n(1+\epsilon_{0}) }\|\mathcal{X}_{l}-\mathcal{X}\|_{F}
 \leq
\frac{\epsilon_{0}}{1-4\epsilon_{0}}\|\mathcal{X}_{l}-\mathcal{X}\|_{F}
\leq(1+\epsilon_{\alpha})\epsilon_{0}\|\mathcal{X}_{l}-\mathcal{X}\|_{F}.
\end{align*}
For $I_{3}$,   noting the definition of $\varepsilon_{\alpha}$ and the
inequality (\ref{e5}) in Lemma \ref{lem1}, we have
\begin{equation*}
\mathcal{I}_{3}=2\|\alpha_{l}P_{\ten{T}_{l}}R_{\Omega}(\mathcal{I}-P_{\ten{T}_{l}})(\mathcal{X}_{l}-\mathcal{X})\|_{F}\leq
(1+\epsilon_{\alpha})\epsilon_{0}\|\mathcal{X}_{l}-\mathcal{X}\|_{F}.
\end{equation*}
For $I_{4}$, noting that
\begin{align*}
\beta_{l}P_{\ten{T}_{l}}(P_{l-1})=\sum_{j=1}^{l-1}(\Pi_{i=j+1}^{l}\beta_{i})(\Pi_{k=j}^{l-1}P_{\ten{T}_{k}})(\mathcal{G}_{j}),
\end{align*}
we have
\begin{align*}
\mathcal{I}_{4}&\leq2|\alpha_{l}|\sum_{j=0}^{l-1}(\Pi_{i=j+1}^{l}\beta_{i})\|(\Pi_{k=j}^{l-1})P_{\ten{T}_{k}}(\mathcal{G}_{j})\|_{F}\\
&\leq
2|\alpha_{l}|\sum_{j=0}^{l-1}\epsilon_{\beta}^{l-j}\|P_{\ten{T}_{j}}(\mathcal{G}_{j})\|_{F}=2|\alpha_{l}|\sum_{j=0}^{l-1}\epsilon_{\beta}^{l-j}\|P_{\ten{T}_{j}}R_{\Omega}(\mathcal{X}_{j}-\mathcal{X})\|_{F}\\
&\leq 2|\alpha_{l}|\sum_{j=0}^{l-1}\epsilon_{\beta}^{l-j}(\|P_{\ten{T}_{j}}R_{\Omega}P_{\ten{T}_{j}}(\mathcal{X}_{j}-\mathcal{X})\|_{F}
+\|P_{\ten{T}_{j}}R_{\Omega}(\mathcal{I}-P_{\ten{T}_{j}})(\mathcal{X}_{j}-\mathcal{X})\|_{F})\\
&\leq
2|\alpha_{l}|\sum_{j=0}^{l-1}\epsilon_{\beta}^{l-j}\Big(\|P_{\ten{T}_{j}}R_{\Omega}(\mathcal{X}_{j}-\mathcal{X})P_{\ten{T}_{j}}\|_{F}
+\|R_{\Omega}P_{\ten{T}_{j}}\|\frac{\|(\mathcal{X}_{j}-\mathcal{X})\|^2_{F}}{\sigma_{min}(\mathcal{X})}\Big)\\
&\leq \frac{(1+\epsilon_{\alpha})}{p}\sum_{j=0}^{l-1}\epsilon_{\beta}^{l-j}(2(1+4\epsilon_{0})+p\epsilon_{0})\|\mathcal{X}_{j}-\mathcal{X}\|_{F}\\
&=
(1+\epsilon_{\alpha})\sum_{j=0}^{l-1}\epsilon_{\beta}^{l-j}(2(1+4\epsilon_{0})+\epsilon_{0})\|\mathcal{X}_{j}-\mathcal{X}\|_{F},
\end{align*}
where the second inequality follows from
\begin{equation*}
\|(\Pi_{k}^{l-1}P_{\ten{T}_{k}})(\mathcal{G}_{j})\|_{F}\leq
\|P_{\ten{T}_{j}}(\mathcal{G}_{j})\|_{F}.
\end{equation*}
Taking $I_{1}-I_{4}$ into (\ref{ne1}) yields
\begin{align*}
&\|\mathcal{X}_{l+1}-\mathcal{X}\|_{F}\\
\leq~&
(10\epsilon_{0}+2\epsilon_{\alpha}(1+5\epsilon_{0}))\|\mathcal{X}_{l}-\mathcal{X}\|_{F}
+(1+\epsilon_{\alpha})(2(1+4\epsilon_{0})+\epsilon_{0})\sum_{j=0}^{l-1}\epsilon_{\beta}^{l-j}\|\mathcal{X}_{j}-\mathcal{X}\|_{F}\\
\leq~&(10\epsilon_{0}+2\epsilon_{\alpha}(1+5\epsilon_{0}))\|\mathcal{X}_{l}-\mathcal{X}\|_{F}
+(2+10\epsilon_{0}+2\epsilon_{\alpha})(1+5\epsilon_{0})\sum_{j=0}^{l-1}\epsilon_{\beta}^{l-j}\|\mathcal{X}_{j}-\mathcal{X}\|_{F}\\
:=~&\rho_{1}\|\mathcal{X}_{l}-\mathcal{X}\|_{F}+\rho_{2}\sum_{j=0}^{l-1}\epsilon_{\beta}^{l-j}\|\mathcal{X}_{j}-\mathcal{X}\|_{F}.
\end{align*}
Define
\begin{align*}
&\gamma_{1}=\rho_{1}+\epsilon_{\beta}=\frac{18\epsilon_{0}-10\epsilon_{0}k_{1}(1+4\epsilon_{0})}{(1-4\epsilon_{0})-k_{1}(1+4\epsilon_{0})}+\frac{4k_{2}\epsilon_{0}+k_{1}k_{2}}{1-4\epsilon_{0}},\\
&\gamma_{2}=(\rho_{2}-\rho_{1})\epsilon_{\beta}=\frac{8k_{2}\epsilon_{0}+2k_{1}k_{2}}{1-4\epsilon_{0}},\\
&v_{cg}=\frac{1}{2}(\gamma_{1}+\sqrt{\gamma_{1}^2+\gamma_{2}}).
\end{align*}
When $l=0$, it is easy to get
\begin{align*}
\|\mathcal{X}_{1}-\mathcal{X}\|_{F}\leq
\frac{18\varepsilon_{0}}{1-4\varepsilon_{0}}\|\mathcal{X}_{0}-\mathcal{X}\|_{F}\leq
v_{cg}\|\mathcal{X}_{0}-\mathcal{X}\|_{F}.
\end{align*}
It follows Lemma \ref{wei4} that if $\gamma_{1}+\gamma_{2}<1$,
we have $v_{cg}<1$, then
\begin{align*}
\|\mathcal{X}_{l+1}-\mathcal{X}\|_{F}\leq
v_{cg}^{l+1}\|\mathcal{X}_{0}-\mathcal{X}\|_{F},
\end{align*}
which completes the proof.
\end{proof}

We remark that
 the manifold $\ten{M}_{\textbf{r}}$ is not closed, and the closure of $\ten{M}_{\textbf{r}}$ are bounded by transformed  multi-rank $\textbf{r}$ (point bounded) tensors. Then a sequence of tensors in $\ten{M}_{\textbf{r}}$
 may approach a tensor which the $i$th transformed tubal rank less than $\textbf{r}_{i},$ which is saying that the limit of Algorithm 2
 may not be
 in the manifold $\ten{M}_{\textbf{r}}$. In order to avoid this statements happens, we need the following results.

 \begin{proposition}[Proposition 4.1 in \cite{bart}]
 Let $\{\mathcal{X}_{i}\}$ be an infinite sequence of iterates
 generated by Algorithm \ref{alg1}. Then, every accumulation point $\mathcal{X_{\ast}}$ of $\{\mathcal{X}_{i}\}$ satisfies
 $P_{T_{\mathcal{X}_{\ast}}}R_{\Omega}(\mathcal{X}_{\ast})=P_{T_{\mathcal{X}_{\ast}}}R_{\Omega}(\mathcal{A})$.
 \end{proposition}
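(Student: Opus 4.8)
The plan is to read the claimed identity as the statement that every accumulation point of $\{\mathcal{X}_i\}$ is a \emph{critical point} of $f$ on the manifold $\ten{M}_{\textbf{r}}$. Indeed, since $\mbox{\tt grad}\, f(\mathcal{Z}) = P_{\ten{T}_{\mathcal{Z}}\ten{M}_{\textbf{r}}}(R_{\Omega}\mathcal{Z} - R_{\Omega}\mathcal{A})$, the conclusion $P_{\ten{T}_{\mathcal{X}_*}}R_{\Omega}(\mathcal{X}_*) = P_{\ten{T}_{\mathcal{X}_*}}R_{\Omega}(\mathcal{A})$ is precisely $\mbox{\tt grad}\, f(\mathcal{X}_*) = 0$. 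Hence it suffices to show that $\|P_{\ten{T}_l}(\mathcal{G}_l)\|_F = \|\mbox{\tt grad}\, f(\mathcal{X}_l)\|_F \to 0$ along the iteration and then pass this to the limit along a convergent subsequence. This is the tensor analogue of Proposition~4.1 in \cite{bart}, so the plan is to follow that template and verify that each ingredient it needs is available here: $\ten{M}_{\textbf{r}}$ is a smooth embedded manifold (Proposition \ref{prop1}) whose tangent space and orthogonal projector are given in closed form by \eqref{xin1}--\eqref{prj1}, the update $H_r$ is a metric projection onto $\ten{M}_{\textbf{r}}$ and hence a second-order retraction (Proposition \ref{prop3}), and $\|R_{\Omega}\|$ is uniformly bounded by Lemma \ref{le2}.

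The central step is to establish a uniform sufficient-decrease inequality $f(\mathcal{X}_{l+1}) \le f(\mathcal{X}_l) - c\,\|P_{\ten{T}_l}(\mathcal{G}_l)\|_F^2$ with $c>0$ independent of $l$. The key points are: (i) the step length $\alpha_l$ in line~4 is, up to the algorithm's sign convention, the exact minimizer of the one-dimensional quadratic $t \mapsto \frac{1}{2}\|R_{\Omega}(\mathcal{X}_l + t\mathcal{Q}_l - \mathcal{A})\|_F^2$ along the tangent direction $\mathcal{Q}_l \in \ten{T}_l$, so $f(\mathcal{X}_l + \alpha_l\mathcal{Q}_l) \le f(\mathcal{X}_l)$ with decrease $\langle P_{\ten{T}_l}(\mathcal{G}_l),\mathcal{Q}_l\rangle^2 / \bigl(2\langle \mathcal{Q}_l, R_{\Omega}\mathcal{Q}_l\rangle\bigr)$; (ii) the restart rule makes $\mathcal{Q}_l$ gradient related --- on a restart $\beta_l=0$ and $\mathcal{Q}_l = P_{\ten{T}_l}(\mathcal{G}_l)$, while off a restart the two conditions in \eqref{eq6} keep the cosine of the angle between $\mathcal{Q}_l$ and $P_{\ten{T}_l}(\mathcal{G}_l)$ bounded below and $\|\mathcal{Q}_l\|_F$ comparable to $\|P_{\ten{T}_l}(\mathcal{G}_l)\|_F$; combining with $\langle \mathcal{Q}_l, R_{\Omega}\mathcal{Q}_l\rangle \le \|R_{\Omega}\|\,\|\mathcal{Q}_l\|_F^2$ and Lemma \ref{le2} converts the decrease in (i) into one proportional to $\|P_{\ten{T}_l}(\mathcal{G}_l)\|_F^2$; and (iii) since $H_r$ is the metric projection onto $\ten{M}_{\textbf{r}}$ and $\mathcal{X}_l\in\ten{M}_{\textbf{r}}$ is feasible for it, $\|\mathcal{X}_{l+1} - (\mathcal{X}_l+\alpha_l\mathcal{Q}_l)\|_F \le \alpha_l\|\mathcal{Q}_l\|_F$ and in fact the deviation is $O(\alpha_l^2\|\mathcal{Q}_l\|_F^2)$ because the projection is a second-order retraction, so this retraction error is of higher order than the first-order decrease and is absorbed, exactly as in \cite{bart}.

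With the sufficient-decrease inequality in hand, $\{f(\mathcal{X}_l)\}$ is nonincreasing and bounded below by $0$, hence convergent, so $f(\mathcal{X}_l) - f(\mathcal{X}_{l+1}) \to 0$ and therefore $\|P_{\ten{T}_l}(R_{\Omega}\mathcal{X}_l - R_{\Omega}\mathcal{A})\|_F \to 0$. Now take an accumulation point $\mathcal{X}_*$ with $\mathcal{X}_{l_j}\to\mathcal{X}_*$ and (as the statement requires) $\mathcal{X}_*\in\ten{M}_{\textbf{r}}$, so that its tangent-space projector is defined. By Lemma \ref{lem5}(vi), $\|P_{\ten{T}_{\mathcal{X}_{l_j}}} - P_{\ten{T}_{\mathcal{X}_*}}\| \le 2\|\mathcal{X}_{l_j}-\mathcal{X}_*\|_F/\sigma_{min}(\mathcal{X}_*) \to 0$, so $\mathcal{X}\mapsto P_{\ten{T}_{\mathcal{X}}}$ is continuous at $\mathcal{X}_*$; combining with the linearity and boundedness of $\mathcal{Z}\mapsto R_{\Omega}\mathcal{Z}$ gives $P_{\ten{T}_{\mathcal{X}_*}}(R_{\Omega}\mathcal{X}_* - R_{\Omega}\mathcal{A}) = \lim_j P_{\ten{T}_{\mathcal{X}_{l_j}}}(R_{\Omega}\mathcal{X}_{l_j} - R_{\Omega}\mathcal{A}) = 0$, which is the assertion.

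The step I expect to be the main obstacle is the uniform sufficient decrease, item (iii): because $H_r$ truncates in Frobenius norm rather than minimizing $f$, one cannot simply write $f(\mathcal{X}_{l+1}) \le f(\mathcal{X}_l+\alpha_l\mathcal{Q}_l)$, and the argument has to lean on the second-order (metric-projection) character of $H_r$ to bound the retraction error by $O(\alpha_l^2\|\mathcal{Q}_l\|_F^2)$ and show it is dominated, uniformly in $l$, by the first-order decrease produced by the exact linearized line search; one must also rule out degenerate steps in which $\langle \mathcal{Q}_l, R_{\Omega}\mathcal{Q}_l\rangle$ is tiny relative to $\|P_{\ten{T}_l}(\mathcal{G}_l)\|_F^2$. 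A minor additional caveat, already flagged in the remark preceding the proposition, is that $\ten{M}_{\textbf{r}}$ is not closed, so the statement genuinely concerns only those accumulation points that lie on $\ten{M}_{\textbf{r}}$.
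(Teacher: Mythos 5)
The paper does not actually prove this proposition: it is imported verbatim as Proposition~4.1 of \cite{bart}, whose proof rests on the general accumulation-point theorem for Riemannian line-search methods with gradient-related directions and \emph{Armijo} step sizes (Theorem~4.3.1 of \cite{Absil}). Your reconstruction follows the right template and correctly isolates the two nontrivial ingredients (gradient-relatedness of $\mathcal{Q}_l$ via the restart rule \eqref{eq6}, and continuity of $\mathcal{X}\mapsto P_{\ten{T}_{\mathcal{X}}}$ at an accumulation point lying on $\ten{M}_{\textbf{r}}$, via Lemma~\ref{lem5}(vi)). But the central step --- the uniform sufficient-decrease inequality $f(\mathcal{X}_{l+1})\le f(\mathcal{X}_l)-c\,\|P_{\ten{T}_l}(\mathcal{G}_l)\|_F^2$ --- is not established, and as described it would fail. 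Algorithm~\ref{alg1} has no backtracking: $\alpha_l$ exactly minimizes the linearized cost along $\mathcal{Q}_l$, and the resulting decrease can then be destroyed, not merely perturbed, by the truncation $H_r$ unless the retraction error is bounded uniformly in $l$. The second-order estimate $\|H_r(\mathcal{X}_l+\alpha_l\mathcal{Q}_l)-(\mathcal{X}_l+\alpha_l\mathcal{Q}_l)\|_F=O(\alpha_l^2\|\mathcal{Q}_l\|_F^2)$ is only local, and its hidden constant scales like $1/\sigma_{min}(\mathcal{X}_l)$ because the curvature of $\ten{M}_{\textbf{r}}$ blows up at its boundary; since $\ten{M}_{\textbf{r}}$ is not closed, nothing prevents $\sigma_{min}(\mathcal{X}_l)\to 0$ along the iteration. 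Without either (a) an Armijo backtracking step that enforces sufficient decrease by construction --- which is what \cite{bart} and Theorem~4.3.1 of \cite{Absil} actually use --- or (b) an a priori assumption that the iterates remain in a compact subset of $\ten{M}_{\textbf{r}}$ bounded away from lower-rank tensors, the constant $c$ is not uniform and the telescoping argument does not yield $\|P_{\ten{T}_l}(\mathcal{G}_l)\|_F\to 0$.

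A second point you flag but do not close: $\alpha_l$ is undefined or unbounded when $\langle \mathcal{Q}_l,R_\Omega P_{\ten{T}_l}(\mathcal{Q}_l)\rangle=0$, which occurs whenever $\mathcal{Q}_l$ is supported off $\Omega$; excluding this requires either the conditioning hypotheses \eqref{e5}--\eqref{e6} (which hold only with high probability and only in a neighborhood of $\mathcal{X}$) or a safeguard in the algorithm. If you are willing to assume the hypotheses of Theorem~\ref{themm}, both difficulties vanish --- Lemmas~\ref{lem2} and~\ref{lemn1} bound $\alpha_l$ and the linear convergence already forces $\mathcal{X}_l\to\mathcal{X}$ --- but then the proposition is subsumed by Theorem~\ref{themm} and loses its role as an assumption-free safeguard. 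In short, your limiting argument (continuity of the projector plus $\mathcal{X}_*\in\ten{M}_{\textbf{r}}$) is sound, but the decrease argument needs either the Armijo mechanism of \cite{bart} or an explicit compactness assumption to be made rigorous.
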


\section{The Initialization}

In Theorem \ref{themm}, there are three conditions \eqref{e1}-\eqref{e3} to guarantee the
convergence of Alg. \ref{alg1}. The requirement for $R_{\Omega}$
 to be bounded in (\ref{e1})  is just an requirement of the sampling model,
and by Lemma \ref{le2} it can be satisfied with probability at least $1-n^{3-3\beta}$.
The second condition (\ref{e2}) plays a key role in
nuclear norm minimization for tensor completion which have been proved in  \cite{gross2010,Recht2011,wei} under different sampling models.
For tensor case, it also can be seen as a local restricted isometry
property and has been established in \cite{jiangm}  for the Bernoulli model.
In our setting, we consider the sampling with replacement model instead of without
replacement model, then
by Lemma \ref{le1} we have \eqref{e2} satisfied with
probability at least $1-2(n_{(1)}n_{3})^{1-\frac{5}{4}\beta}$, as
long as $m\geq \frac{20}{3}\beta \mu r n_{(1)}n_{3} \log(n_{(1)}n_3)$.
Thus the only issue that
remains to be addressed is how to produce an initial guess that is
sufficiently close to the original tensor. In this section, we will consider two
initialization strategies.

\subsection{Hard Thresholding}

Based on the transformed tensor incoherence
conditions given in (\ref{eq17}), we can prove the
following lemma which will be used many times in the proofs of Lemmas \ref{le1} and \ref{lemma2}.

\begin{lemma} \label{lem6.3}
Let $\mathcal{A} \in \R^{n_1 \times n_2 \times n_3}$ be an arbitrary
tensor, and $\ten{T}$ be given as (\ref{xin1}). Suppose that the transformed
tensor incoherence conditions (\ref{eq17}) are
satisfied,
then
$$\|P_{\ten{T}}(\mathcal{E}_{abc})\|^{2}_{F}\leq \frac{2\mu_{0} r}{n_{(2)}}.$$
\end{lemma}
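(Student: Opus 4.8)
The plan is to bound $\|P_{\ten{T}}(\Eabc)\|_F^2$ by writing out the projection formula \eqref{prj1} applied to the elementary tensor $\Eabc$ and estimating each of the three resulting terms. Recall that $\Eabc = \eabc$ where $\tc{e}_a$, $\tub{e}_b$, $\tc{e}_c$ are the standard (untransformed) coordinate bases; the key algebraic fact I will use is that under the $t_c$-product and the DCT, $\langle \mathcal{E}_{abc}, \mathcal{E}_{a'b'c'}\rangle$ and the norms $\|\mathcal{U}^T \ast \mathcal{E}_{abc}\|_F$ type quantities can be expressed in the transform domain. First I would note that $P_{\ten{T}}(\Eabc) = \mathcal{U}\ast\mathcal{U}^T\ast\Eabc + \Eabc\ast\mathcal{V}\ast\mathcal{V}^T - \mathcal{U}\ast\mathcal{U}^T\ast\Eabc\ast\mathcal{V}\ast\mathcal{V}^T$, so by the triangle inequality and the fact that $\mathcal{U}\ast\mathcal{U}^T$ and $\mathcal{V}\ast\mathcal{V}^T$ are orthogonal projections (in the $t_c$-sense, hence nonexpansive on Frobenius norm), one gets
\begin{align*}
\|P_{\ten{T}}(\Eabc)\|_F^2 \le 2\|\mathcal{U}\ast\mathcal{U}^T\ast\Eabc\|_F^2 + 2\|\Eabc\ast\mathcal{V}\ast\mathcal{V}^T\|_F^2,
\end{align*}
using the standard trick that the cross term involving all three factors is dominated by the first two (e.g. $\|\mathcal{U}\mathcal{U}^T X \mathcal{V}\mathcal{V}^T\|_F \le \|\mathcal{U}\mathcal{U}^T X\|_F$), so that after regrouping the three-term expansion into two "column" and "row" pieces the bound $2a+2b$ suffices.

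Next I would estimate $\|\mathcal{U}\ast\mathcal{U}^T\ast\Eabc\|_F$. Writing $\Eabc = \tc{e}_a \ast \tub{e}_b \ast \tc{e}_c^T$, the tube $\tub{e}_b$ only rescales/rotates within the third-dimension fibers and does not change Frobenius norm after the projection, so $\|\mathcal{U}\ast\mathcal{U}^T\ast\Eabc\|_F = \|\mathcal{U}^T\ast\tc{e}_a\|_F \cdot(\text{tube factor})$, and by the incoherence condition \eqref{eq17} this is at most $\sqrt{\mu_0 r/n_1}$. Similarly $\|\Eabc\ast\mathcal{V}\ast\mathcal{V}^T\|_F \le \|\mathcal{V}^T\ast\tc{e}_c\|_F \le \sqrt{\mu_0 r/n_2}$. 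Combining, $\|P_{\ten{T}}(\Eabc)\|_F^2 \le 2\mu_0 r/n_1 + 2\mu_0 r/n_2 \le 2\cdot 2\mu_0 r / n_{(2)}$ — wait, I should be careful with constants: with $n_{(1)} \ge n_{(2)}$ denoting the ordered dimensions, $1/n_1 + 1/n_2 \le 2/n_{(2)}$, giving $\|P_{\ten{T}}(\Eabc)\|_F^2 \le 2\mu_0 r \cdot (1/n_1+1/n_2) \le 4\mu_0 r/n_{(2)}$; to land exactly on the stated $2\mu_0 r/n_{(2)}$ I will need the sharper observation that one can keep the bound as $\mu_0 r/n_1 + \mu_0 r/n_2$ directly (dropping one of the two factors of $2$ by a more careful treatment of the cross term, as in Candès–Recht), which yields $\le 2\mu_0 r/n_{(2)}$.

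The main obstacle I anticipate is handling the third-dimension (tube) structure carefully: one must verify that inserting $\tub{e}_b$ and the DCT normalization genuinely does not inflate the norm — i.e., that $\|\mathcal{U}\ast\mathcal{U}^T\ast(\tc{e}_a\ast\tub{e}_b\ast\tc{e}_c^T)\|_F$ factors cleanly as (something bounded by incoherence)$\times$(something of size $\le 1$). This is where the specific definition of the transformed column basis $\tc{e}_i$ (Definition \ref{defn}) and the block-diagonalization identity $\langle\mathcal{A},\mathcal{B}\rangle = \langle\overline{\mathcal{A}},\overline{\mathcal{B}}\rangle$ must be invoked to pass to the transform domain, reduce everything to the frontal slices $\widehat{\mathcal{U}}^{(i)}$, and apply the matrix-case argument slicewise. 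I expect the bookkeeping of which normalization constant ($1/\sqrt{n_3}$ vs $1$) appears where to be the only genuinely delicate point; once the transform-domain reduction is set up, the estimate is the standard matrix incoherence bound applied to each slice and then summed.
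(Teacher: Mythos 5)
Your overall route is the same as the paper's: reduce $\|P_{\ten{T}}(\Eabc)\|_F^2$ to the two incoherence quantities $\|\mathcal{U}^T\ast\Eabc\|_F^2$ and $\|\Eabc\ast\mathcal{V}\|_F^2$, pass to the transform domain via the block-diagonalization, and apply the incoherence condition slice by slice using the fact that $\widehat{\Eabc}^{(k)}=\mu^k\,\be_a\be_b^T$ with $[\mu^1,\dots,\mu^{n_3}]$ a column of the DCT matrix. The one genuine gap is exactly the point you flag but do not close: the triangle-inequality split $\|P_{\ten{T}}(\Eabc)\|_F^2\le 2\|\mathcal{U}\ast\mathcal{U}^T\ast\Eabc\|_F^2+2\|\Eabc\ast\mathcal{V}\ast\mathcal{V}^T\|_F^2$ only yields $4\mu_0 r/n_{(2)}$. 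The "more careful treatment of the cross term" you allude to is not optional hand-tuning; it is the orthogonal regrouping
\begin{equation*}
P_{\ten{T}}(\mathcal{Z})=\mathcal{U}\ast\mathcal{U}^{T}\ast\mathcal{Z}+\bigl(\mathcal{I}-\mathcal{U}\ast\mathcal{U}^{T}\bigr)\ast\mathcal{Z}\ast\mathcal{V}\ast\mathcal{V}^{T},
\end{equation*}
whose two summands have ranges orthogonal in the Frobenius inner product (one lies in the range of $\mathcal{U}\ast\mathcal{U}^T$ acting on the left, the other in its orthogonal complement), so the squared norms add exactly and each summand is dominated by $\|\mathcal{U}^{T}\ast\mathcal{Z}\|_F^2$ and $\|\mathcal{Z}\ast\mathcal{V}\|_F^2$ respectively, giving $\mu_0 r/n_1+\mu_0 r/n_2\le 2\mu_0 r/n_{(2)}$ with no extra factor. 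This is precisely what the paper's opening sentence "it suffices to show..." is silently invoking; without writing it, your argument as stated proves only the weaker constant.

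Two smaller points. First, your indexing of the $\mathcal{V}$-side term is off: with $\Eabc$ having its $1$ in position $(a,b,c)$, the tube index is $c$ and the lateral index is $b$, so the relevant incoherence quantity is $\|\mathcal{V}^{T}\ast\tc{e}_b\|_F\le\sqrt{\mu_0 r/n_2}$, not $\tc{e}_c$. Second, your claim that the "tube factor" is $\le 1$ needs justification: the transformed basis $\tc{e}_a$ carries the uniform weight $1/\sqrt{n_3}$ on every frontal slice, whereas $\Eabc$ carries the weights $\mu^k$ from a DCT column, whose individual entries can exceed $1/\sqrt{n_3}$; relating the two therefore costs a constant (the paper absorbs this by bounding the DCT entries, and is itself loose here). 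Neither of these changes the shape of the argument, but the orthogonal-decomposition step is the one piece of mathematics your write-up is actually missing.
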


\begin{proof}
By the definition of $\PT$, it suffices to show $\|\mathcal{U}^T\ast \Eabc\|_F^2\leq \mu_{0} r/n_1$
and $\| \Eabc\ast\mathcal{V}\|_F^2\leq \mu_{0} r/n_2$. We only prove the first one and the second one can be proved similarly. Simple calculation shows that
\begin{align*}
\|\mathcal{U}^T\ast \Eabc\|_F^2 & = \|\overline{\mathcal{U}}^T\overline{\Eabc}\|_F^2 = \sum_{k=1}^{n_3}\|\mu_c^k\cdot\overline{\mathcal{U}}_k^T\be_a\be_b^T\|_2^2=\sum_{k=1}^{n_3}\|\mu_c^k\cdot\overline{\mathcal{U}}_k^T\be_a\|_2^2,
\end{align*}
where $[\mu_c^1,\cdots,\mu_c^{n_3}]^T$ is the $c$-th column of the DCT matrix. Noting that the magnitude of each entry of the DCT matrix is bounded by $2/\sqrt{n_3}$, $\|\mathcal{U}^T\ast \Eabc\|_F^2\leq \mu_{0} r/n_1$  follows immediately from the tensor incoherence condition.
\end{proof}

Denote $n_{(1)}=\max\{n_{1},n_{2}\},n_{(2)}=\min\{n_{1},n_{2}\}$.
Jiang \cite{jiangm} derived some lemmas based on Bernoulli model sampling without replacement model. For sampling with replacement model, we can get the following
results.

\begin{lemma} \label{le1}
Suppose $\mathcal{X} \in \R^{n_1 \times n_2 \times n_3}$ is a fixed
tensor, and $\Omega$ with $|\Omega|=m$ is a set of indices sampled
independently and uniformly with replacement. Let
$\mathcal{X}=\mathcal{U} \ast \mathcal{S} \ast \mathcal{V}^{T} $  be a tubal rank $r$ tensor which satisfy the incoherence conditions given in \eqref{eq17}.  Then for all $\beta>1$,
$$
\big\|\frac{n_{1}n_{2}n_{3}}{m}\mathcal{P}_{\ten{T}}R_{\Omega}\mathcal{P}_{\ten{T}}-\mathcal{P}_{\ten{T}}\big\|_{op}\lesssim \sqrt{\frac{\mu_{0} r n_{(1)}n_{3}\beta \log(n_{(1)}n_{3})}{m}},
$$
holds with high probability
with the condition that
$m\gtrsim \beta \mu_{0} r n_{(1)}n_{3} \log(n_{(1)}n_3)$.
\end{lemma}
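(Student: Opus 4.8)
The plan is to recognise $\tfrac{n_1n_2n_3}{m}\PT R_\Omega\PT$ as an empirical average of i.i.d.\ self-adjoint random operators and to apply the operator (matrix) Bernstein inequality. Because $\Omega$ is drawn with replacement, we may write $R_\Omega=\sum_{k=1}^m\langle\Eabck,\cdot\rangle\Eabck$ with $(a_k,b_k,c_k)$ i.i.d.\ uniform on $\{1,\dots,n_1\}\times\{1,\dots,n_2\}\times\{1,\dots,n_3\}$. Using that $\PT$ is self-adjoint, $\tfrac{n_1n_2n_3}{m}\PT R_\Omega\PT=\tfrac1m\sum_{k=1}^m W_k$ where $W_k(\mathcal{A})=n_1n_2n_3\,\langle\PT(\Eabck),\mathcal{A}\rangle\,\PT(\Eabck)$. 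Since $\{\Eabc\}$ is an orthonormal basis of $\R^{n_1\times n_2\times n_3}$ we have $\sum_{a,b,c}\langle\Eabc,\cdot\rangle\Eabc=\OpId$, hence $\E W_k=n_1n_2n_3\cdot\tfrac1{n_1n_2n_3}\,\PT\OpId\PT=\PT$, i.e.\ each $W_k$ is an unbiased estimator of $\PT$, and
$$\frac{n_1n_2n_3}{m}\PT R_\Omega\PT-\PT=\frac1m\sum_{k=1}^m\big(W_k-\PT\big)$$
is a sum of i.i.d.\ centred self-adjoint operators.

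The two Bernstein parameters come from Lemma~\ref{lem6.3}. Since $n_1n_2=n_{(1)}n_{(2)}$, the uniform bound is $\|W_k\|_{\textup{op}}=n_1n_2n_3\|\PT(\Eabck)\|_F^2\le n_1n_2n_3\cdot\frac{2\mu_0 r}{n_{(2)}}=2\mu_0 r\,n_{(1)}n_3$, so $\|W_k-\PT\|_{\textup{op}}\le 2\mu_0 r\,n_{(1)}n_3+1=:L$. For the variance, $W_k$ is a rank-one operator with $W_k^2=n_1n_2n_3\|\PT(\Eabck)\|_F^2\,W_k$, whence
$$\E W_k^2=n_1n_2n_3\sum_{a,b,c}\|\PT(\Eabc)\|_F^2\,\langle\PT(\Eabc),\cdot\rangle\PT(\Eabc)\preceq 2\mu_0 r\,n_{(1)}n_3\cdot\PT,$$
using $\|\PT(\Eabc)\|_F^2\le 2\mu_0 r/n_{(2)}$ together with $\sum_{a,b,c}\langle\PT(\Eabc),\cdot\rangle\PT(\Eabc)=\PT$. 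Since $\E(W_k-\PT)^2\preceq\E W_k^2$, summing gives $\big\|\sum_{k=1}^m\E\big(\tfrac1m(W_k-\PT)\big)^2\big\|_{\textup{op}}\le\frac{2\mu_0 r\,n_{(1)}n_3}{m}=:\sigma^2$.

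Now I would apply the operator Bernstein inequality: for every $t>0$,
$$\PP\Big(\Big\|\frac{n_1n_2n_3}{m}\PT R_\Omega\PT-\PT\Big\|_{\textup{op}}\ge t\Big)\le 2d\,\exp\Big(\frac{-t^2/2}{\sigma^2+Lt/(3m)}\Big).$$
All the operators $W_k$ and $\PT$ vanish off the tangent space $\ten{T}$, so one may take $d=\dim\ten{T}\le(n_1+n_2)rn_3\le 2n_{(1)}rn_3$ by Proposition~\ref{prop1}; this is what replaces the naive $\log(n_1n_2n_3)$ by $\log(n_{(1)}n_3)$. Choosing $t\asymp\sqrt{\sigma^2\,\beta\log(n_{(1)}n_3)}=\sqrt{\tfrac{\mu_0 r\,n_{(1)}n_3\,\beta\log(n_{(1)}n_3)}{m}}$ and imposing $m\gtrsim\beta\mu_0 r\,n_{(1)}n_3\log(n_{(1)}n_3)$ makes $Lt/(3m)\lesssim\sigma^2$, so the exponent is $\gtrsim\beta\log(n_{(1)}n_3)$ and the failure probability is at most $2d\,(n_{(1)}n_3)^{-c\beta}$, which after fixing the absolute constants is of the claimed order $\lesssim(n_{(1)}n_3)^{1-\frac54\beta}$.

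The step I expect to be the main obstacle is the bookkeeping in the DCT/block-diagonal domain: one has to be careful that sending $\Eabc$ and $\PT(\Eabc)$ through the cosine transform preserves the inner products and spectral norms used above (guaranteed by $\langle\mathcal{A},\mathcal{B}\rangle=\langle\overline{\mathcal{A}},\overline{\mathcal{B}}\rangle$ and Definition~\ref{def5}) and that the non-unitarity of the DCT only costs the explicit factor $2$ already absorbed into Lemma~\ref{lem6.3}. A secondary subtlety is verifying the completeness relation $\sum_{a,b,c}\langle\Eabc,\cdot\rangle\Eabc=\OpId$, which is precisely what makes $W_k$ unbiased so that no correction term appears---here the sampling-with-replacement model pays off, since the i.i.d.\ structure is immediate and no Bernoulli-type debiasing is required.
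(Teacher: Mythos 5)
Your proposal is correct and follows essentially the same route as the paper: both write $\PT R_\Omega\PT$ as a sum of i.i.d.\ rank-one operators $\mathcal{Z}\mapsto\langle\mathcal{Z},\PT(\Eabck)\rangle\PT(\Eabck)$, invoke Lemma~\ref{lem6.3} to get the uniform bound $\|\PT(\Eabc)\|_F^2\le 2\mu_0 r/n_{(2)}$ and the variance bound, and conclude with the operator Bernstein inequality at the same choice of deviation level. The only cosmetic differences are your rescaling by $n_1n_2n_3$ and your explicit justification of the centering via the completeness relation and of the dimension factor $d=\dim\ten{T}$, both of which the paper leaves implicit.
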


\begin{lemma}\label{lemma2}
Suppose $\mathcal{Z} \in \R^{n_1 \times n_2 \times n_3}$ is a fixed
tensor, and $\Omega$ with $|\Omega|=m$ is a set of indices sampled
independently and uniformly with replacement. Then for all $\beta>1$
\begin{equation*}
\|(\OpId - \frac{n_1 n_2 n_3}{m}R_{\Omega})\mathcal{Z}\| \lesssim
\sqrt{\frac{\beta
n^2_{(1)}n_{(2)}n_{3}\log(n_{(1)}n_3)}{m}}\|\mathcal{Z}\|_{\infty},
\label{eq25}
\end{equation*}
holds with high probability
with the condition that
$m \gtrsim  \beta n_{(1)}n_{3}\log(n_{(1)}n_3)$.
\end{lemma}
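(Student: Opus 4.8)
The plan is to apply the rectangular matrix Bernstein inequality after writing the error operator as a normalized sum of i.i.d.\ zero-mean matrices in the transform (block-diagonal) domain. Write $\Omega=\{(a_\ell,b_\ell,c_\ell)\}_{\ell=1}^{m}$, with the triples drawn i.i.d.\ uniformly on $\{1,\dots,n_1\}\times\{1,\dots,n_2\}\times\{1,\dots,n_3\}$. Since $R_{\Omega}(\mathcal{Z})=\sum_{\ell=1}^{m}\mathcal{Z}_{a_\ell b_\ell c_\ell}\,\mathcal{E}_{a_\ell b_\ell c_\ell}$ and the uniform average of $n_1n_2n_3\,\mathcal{Z}_{abc}\mathcal{E}_{abc}$ over the grid equals $\mathcal{Z}$, we have $(\OpId-\tfrac{n_1n_2n_3}{m}R_{\Omega})(\mathcal{Z})=\tfrac{1}{m}\sum_{\ell=1}^{m}\mathcal{S}_\ell$ with $\mathcal{S}_\ell:=\mathcal{Z}-n_1n_2n_3\,\mathcal{Z}_{a_\ell b_\ell c_\ell}\,\mathcal{E}_{a_\ell b_\ell c_\ell}$ i.i.d.\ and $\E\mathcal{S}_\ell=0$. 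Because $\|\cdot\|=\|\overline{\,\cdot\,}\|$ and the inner product is preserved by block-diagonalization, it suffices to bound the spectral norm of $\tfrac{1}{m}\sum_\ell\overline{\mathcal{S}}_\ell$, a sum of i.i.d.\ $n_1n_3\times n_2n_3$ matrices.

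Next I would collect the two deterministic inputs Bernstein needs. The $k$-th diagonal block of $\overline{\mathcal{E}}_{abc}$ equals $\mu_c^k\,\be_a\be_b^{T}$, where $[\mu_c^1,\dots,\mu_c^{n_3}]^{T}$ is the $c$-th column of the DCT matrix; since each DCT entry has magnitude at most $2/\sqrt{n_3}$ (as in the proof of Lemma \ref{lem6.3}), $\|\overline{\mathcal{E}}_{abc}\|\le 2/\sqrt{n_3}$. Combined with $\|\overline{\mathcal{Z}}\|\le\|\overline{\mathcal{Z}}\|_F=\|\mathcal{Z}\|_F\le\sqrt{n_1n_2n_3}\,\|\mathcal{Z}\|_\infty$, this gives the almost-sure bound $\|\overline{\mathcal{S}}_\ell\|\le R:=3\,n_{(1)}n_{(2)}\sqrt{n_3}\,\|\mathcal{Z}\|_\infty$. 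For the variance, $\E\mathcal{S}_\ell=0$ yields $0\preceq\E[\overline{\mathcal{S}}_1\overline{\mathcal{S}}_1^{T}]\preceq(n_1n_2n_3)^2\E[\mathcal{Z}_{abc}^2\,\overline{\mathcal{E}}_{abc}\overline{\mathcal{E}}_{abc}^{T}]=n_1n_2n_3\sum_{a,b,c}\mathcal{Z}_{abc}^2\,\overline{\mathcal{E}}_{abc}\overline{\mathcal{E}}_{abc}^{T}$; here $\overline{\mathcal{E}}_{abc}\overline{\mathcal{E}}_{abc}^{T}$ is block diagonal with $k$-th block $(\mu_c^k)^2\be_a\be_a^{T}$, so the sum is block diagonal with $k$-th block a diagonal matrix whose $(a,a)$ entry is $\sum_{b,c}(\mu_c^k)^2\mathcal{Z}_{abc}^2\le (4/n_3)\,n_2n_3\,\|\mathcal{Z}\|_\infty^2$. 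Hence $\|\E[\overline{\mathcal{S}}_1\overline{\mathcal{S}}_1^{T}]\|\le 4\,n_1n_2^2n_3\|\mathcal{Z}\|_\infty^2$, and symmetrically $\|\E[\overline{\mathcal{S}}_1^{T}\overline{\mathcal{S}}_1]\|\le 4\,n_1^2n_2n_3\|\mathcal{Z}\|_\infty^2$; using $n_1n_2^2\le n_{(1)}^2n_{(2)}$ and $n_1^2n_2\le n_{(1)}^2n_{(2)}$, the Bernstein variance proxy is $\sigma^2:=\max\{\|\sum_\ell\E\overline{\mathcal{S}}_\ell\overline{\mathcal{S}}_\ell^{T}\|,\|\sum_\ell\E\overline{\mathcal{S}}_\ell^{T}\overline{\mathcal{S}}_\ell\|\}\le 4m\,n_{(1)}^2n_{(2)}n_3\,\|\mathcal{Z}\|_\infty^2$.

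Then I would invoke matrix Bernstein, $\PP[\|\sum_\ell\overline{\mathcal{S}}_\ell\|\ge t]\le (n_1+n_2)n_3\,\exp\big(-\tfrac{t^2/2}{\sigma^2+Rt/3}\big)$, with $t=m\varepsilon$ and $\varepsilon=C\sqrt{\beta n_{(1)}^2n_{(2)}n_3\log(n_{(1)}n_3)/m}\,\|\mathcal{Z}\|_\infty$. A short computation gives $Rt/\sigma^2=\Theta\big(\sqrt{\beta n_{(2)}\log(n_{(1)}n_3)/m}\big)$, which is bounded by a constant once $m\gtrsim\beta n_{(2)}\log(n_{(1)}n_3)$, in particular under the hypothesis $m\gtrsim\beta n_{(1)}n_3\log(n_{(1)}n_3)$ (note $n_{(2)}\le n_{(1)}\le n_{(1)}n_3$). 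Therefore $\sigma^2+Rt/3\lesssim\sigma^2$, the exponent is $\gtrsim t^2/\sigma^2\gtrsim C^2\beta\log(n_{(1)}n_3)$, and choosing $C$ large enough together with $(n_1+n_2)n_3\le 2n_{(1)}n_3$ makes the failure probability at most $(n_{(1)}n_3)^{1-c\beta}$ for an absolute constant $c$. On the complementary event $\|(\OpId-\tfrac{n_1n_2n_3}{m}R_{\Omega})(\mathcal{Z})\|=\|\tfrac{1}{m}\sum_\ell\overline{\mathcal{S}}_\ell\|\le\varepsilon$, which is the claimed bound.

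The step I expect to be the main obstacle is the variance estimate: one must exploit the block-diagonal/transform-domain structure to recognize that the relevant second-moment matrices are block-wise diagonal with explicitly bounded entries, and then carefully bookkeep the $n_1,n_2,n_3$ exponents so that they collapse to $n_{(1)}^2n_{(2)}n_3$ rather than something larger. A secondary point worth flagging is that the DCT (unlike the FFT) only provides an $O(1/\sqrt{n_3})$ entrywise magnitude bound rather than an exact $1/\sqrt{n_3}$, so the numerical constants differ from the standard $t$-SVD completion estimates and have to be re-derived; it is also the sampling-with-replacement model that makes the summands genuinely i.i.d.\ and hence directly amenable to Bernstein.
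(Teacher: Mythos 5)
Your proposal is correct and follows essentially the same route as the paper's proof: block-diagonalize, express the error as a normalized sum of i.i.d.\ zero-mean matrices $n_1n_2n_3\,\mathcal{Z}_{a_kb_kc_k}\overline{\mathcal{E}_{a_kb_kc_k}}-\overline{\mathcal{Z}}$, bound the almost-sure norm and the two second-moment matrices via the block structure of $\overline{\mathcal{E}}_{abc}$, and apply matrix Bernstein. The only (immaterial) differences are that you bound $\|\overline{\mathcal{E}}_{abc}\|$ by $2/\sqrt{n_3}$ via the entrywise DCT bound where the paper uses $\le 1$, and you use the entrywise bound in the variance sum where the paper uses the unit-norm identity $\sum_c(\mu_c^k)^2=1$; both yield the same order $n_{(1)}^2n_{(2)}n_3\|\mathcal{Z}\|_\infty^2$ for the variance proxy.
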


\begin{lemma}\label{lemm5}
Suppose that $|\Omega|=m$ is a set of indices sampled independently
and uniformly with replacement. Let
$\mathcal{X}_{0}=\ten{H}_{r}(p^{-1}R_{\Omega}(\mathcal{X})).$ Then
\begin{align}\label{e7}
\|\mathcal{X}_{0}-\mathcal{X}\|_{F}\lesssim
\sqrt{\frac{\mu_{1}^2r^2\beta
n_{(1)}n_3\log(n_{(1)}n_3)}{m}}\|\mathcal{X}\|
\end{align}
holds with high probability
with the condition that $m\gtrsim  \beta n_{(1)} \log n_{(1)}n_{3}$.
\end{lemma}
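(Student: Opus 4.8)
The plan is to run the standard spectral-initialization argument inside the t$_c$-SVD framework. Write $p=\frac{m}{n_1n_2n_3}$ and set $\mathcal{M}:=p^{-1}R_{\Omega}(\mathcal{X})$, an unbiased estimator of $\mathcal{X}$, so that $\mathcal{X}_{0}=\ten{H}_{r}(\mathcal{M})$ is the $t_c$-SVD truncation of $\mathcal{M}$ from Proposition \ref{prop3}. The three steps are: (i) reduce $\|\mathcal{X}_{0}-\mathcal{X}\|_{F}$ to the spectral-norm perturbation $\|\mathcal{M}-\mathcal{X}\|$, exploiting that both $\mathcal{X}_0$ and $\mathcal{X}$ have transformed multi-rank $\textbf{r}$; (ii) bound $\|\mathcal{M}-\mathcal{X}\|$ via Lemma \ref{lemma2}; (iii) convert $\|\mathcal{X}\|_{\infty}$ into $\|\mathcal{X}\|$ using the tensor joint incoherence condition on $\mathcal{X}$ (which is part of the standing hypotheses).

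For step (i) I would pass to the transform domain and argue slice by slice. For each $i$, $\widehat{\mathcal{X}}_{0}^{(i)}$ is the rank-$r_i$ SVD truncation of $\widehat{\mathcal{M}}^{(i)}$, so $\|\widehat{\mathcal{X}}_{0}^{(i)}-\widehat{\mathcal{M}}^{(i)}\|_{2}=\sigma_{r_i+1}(\widehat{\mathcal{M}}^{(i)})\leq\|\widehat{\mathcal{M}}^{(i)}-\widehat{\mathcal{X}}^{(i)}\|_{2}$ by Weyl's inequality, since $\widehat{\mathcal{X}}^{(i)}$ has rank $r_i$. The triangle inequality then gives $\|\widehat{\mathcal{X}}_{0}^{(i)}-\widehat{\mathcal{X}}^{(i)}\|_{2}\leq 2\|\widehat{\mathcal{M}}^{(i)}-\widehat{\mathcal{X}}^{(i)}\|_{2}\leq 2\|\mathcal{M}-\mathcal{X}\|$. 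Because $\widehat{\mathcal{X}}_{0}^{(i)}-\widehat{\mathcal{X}}^{(i)}$ has rank at most $2r_i\leq 2r$, we get $\|\widehat{\mathcal{X}}_{0}^{(i)}-\widehat{\mathcal{X}}^{(i)}\|_{F}\leq\sqrt{2r}\,\|\widehat{\mathcal{X}}_{0}^{(i)}-\widehat{\mathcal{X}}^{(i)}\|_{2}$; summing over the $n_3$ frontal slices and using $\|\cdot\|_{F}=\|\overline{\;\cdot\;}\|_{F}$ yields $\|\mathcal{X}_{0}-\mathcal{X}\|_{F}\leq 2\sqrt{2rn_3}\,\|\mathcal{M}-\mathcal{X}\|$.

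For steps (ii)–(iii), note $\mathcal{M}-\mathcal{X}=(p^{-1}R_{\Omega}-\OpId)(\mathcal{X})$, so Lemma \ref{lemma2} applied to $\mathcal{Z}=\mathcal{X}$ gives, with high probability and provided $m\gtrsim\beta n_{(1)}n_3\log(n_{(1)}n_3)$, the bound $\|\mathcal{M}-\mathcal{X}\|\lesssim\sqrt{\frac{\beta n_{(1)}^{2}n_{(2)}n_3\log(n_{(1)}n_3)}{m}}\,\|\mathcal{X}\|_{\infty}$. Inserting the joint incoherence inequality $\|\mathcal{X}\|_{\infty}\leq\mu_{1}\sqrt{\frac{r}{n_1n_2n_3}}\|\mathcal{X}\|$ and the reduction of step (i), then simplifying via $n_1n_2=n_{(1)}n_{(2)}$ (so that $\frac{n_{(1)}^{2}n_{(2)}\,r}{n_1n_2}=n_{(1)}r$), produces $\|\mathcal{X}_{0}-\mathcal{X}\|_{F}\lesssim\mu_{1}\sqrt{\frac{r^{2}\beta n_{(1)}n_3\log(n_{(1)}n_3)}{m}}\,\|\mathcal{X}\|$, as claimed. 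The only mildly delicate points are the slice-wise bookkeeping in step (i) — in particular that $\ten{H}_{r}$ really is the slice-wise best rank-$r_i$ truncation in the DCT domain and that the normalized DCT preserves Frobenius norm, so that the perturbation splits cleanly across the $n_3$ blocks — and checking that $\mathcal{X}$ carries the joint incoherence condition; neither is a genuine obstacle, and no probabilistic estimate beyond Lemma \ref{lemma2} is needed.
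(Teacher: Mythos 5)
Your proposal is correct and follows essentially the same route as the paper's proof: bound $\|\mathcal{X}_0-\mathcal{X}\|$ (spectral) by $2\|p^{-1}R_\Omega(\mathcal{X})-\mathcal{X}\|$ via the best-rank-truncation/Weyl argument, apply Lemma \ref{lemma2}, convert spectral to Frobenius using the rank bound $O(rn_3)$ of the block-diagonal difference, and finish with the joint incoherence condition. Your slice-wise bookkeeping in the DCT domain is just a more explicit version of the paper's one-line tensor-spectral-norm estimate, and the final arithmetic matches.
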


Then we can establish the following theorem.

\begin{theorem}\label{them2}
Let $\mathcal{X}=\mathcal{U} \ast \mathcal{S} \ast
\mathcal{V}^{T} \in \R^{n_{1}\times n_{2}\times n_{3}}$ with $\rank_{t}(\mathcal{X})= \textbf{r} ~\text{and}~ \rank_{ct}(\mathcal{X})= r$. Suppose that
$|\Omega|=m$ is a set of indices sampled independently and uniformly
with replacement. Let
$\mathcal{X}_{0}=\ten{H}_{r}(\rho^{-1}R_{\Omega}(\mathcal{X}))$.  Then the iterates generated by Algorithm 2
are guaranteed to converge to $\mathcal{X}$ with
high probability provided
\begin{equation}\label{new3}
m\gtrsim  \max
\left\{\frac{\mu_{0}r}{\epsilon_{0}^{2}}\log^{\frac{1}{2}}
n_{(1)}n_{3},\frac{\mu_{1}r(1+\epsilon_{0})\kappa\beta^{\frac{1}{2}}}{\epsilon_{0}}n_{(2)}^{\frac{1}{2}}\log
n \right\}\beta n_{(1)}n_{3} \log^{\frac{1}{2}}
n_{(1)}n_{3}.
\end{equation}
where $\kappa$ is the condition number of $\mathcal{X}$.
\end{theorem}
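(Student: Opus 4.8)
The plan is to derive Theorem~\ref{them2} from the deterministic convergence statement Theorem~\ref{themm}: it suffices to show that, under the sampling condition \eqref{new3}, the three hypotheses \eqref{e1}--\eqref{e3} hold simultaneously with high probability for the specific initializer $\mathcal{X}_{0}=\ten{H}_{r}(p^{-1}R_{\Omega}(\mathcal{X}))$, where $p=m/(n_{1}n_{2}n_{3})$. Condition \eqref{e1} is immediate: by Lemma~\ref{le2}, $\|R_{\Omega}\|\leq\frac{10}{3}\beta\log n$ with probability at least $1-n^{3-3\beta}$, with no constraint on $m$ beyond $|\Omega|=m$.

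For \eqref{e2}, I would apply Lemma~\ref{le1} to the target tensor $\mathcal{X}$, whose tangent space $\ten{T}$ is fixed; it gives
$$\big\|P_{\ten{T}}-p^{-1}P_{\ten{T}}R_{\Omega}P_{\ten{T}}\big\|_{\textup{op}}\lesssim\sqrt{\frac{\mu_{0}rn_{(1)}n_{3}\beta\log(n_{(1)}n_{3})}{m}}$$
with high probability once $m\gtrsim\beta\mu_{0}rn_{(1)}n_{3}\log(n_{(1)}n_{3})$. Forcing the right-hand side to be at most $\epsilon_{0}$ amounts to $m\gtrsim\mu_{0}r\beta n_{(1)}n_{3}\log(n_{(1)}n_{3})/\epsilon_{0}^{2}$, which is exactly the first term inside the maximum in \eqref{new3} (the single $\log(n_{(1)}n_{3})$ is written there as $\log^{1/2}(n_{(1)}n_{3})\cdot\log^{1/2}(n_{(1)}n_{3})$ to expose a common factor with the second term).

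The substantive part is \eqref{e3}, which requires the initializer to be close to $\mathcal{X}$ \emph{relative to} $\sigma_{\min}(\mathcal{X})$. By Lemma~\ref{lemm5}, $\|\mathcal{X}_{0}-\mathcal{X}\|_{F}\lesssim\sqrt{\mu_{1}^{2}r^{2}\beta n_{(1)}n_{3}\log(n_{(1)}n_{3})/m}\,\|\mathcal{X}\|$ with high probability; using $\|\mathcal{X}\|=\sigma_{\max}(\mathcal{X})=\kappa\,\sigma_{\min}(\mathcal{X})$ gives
$$\frac{\|\mathcal{X}_{0}-\mathcal{X}\|_{F}}{\sigma_{\min}(\mathcal{X})}\lesssim\kappa\mu_{1}r\sqrt{\frac{\beta n_{(1)}n_{3}\log(n_{(1)}n_{3})}{m}}.$$
I would then demand that this be bounded by $\frac{3p^{1/2}\epsilon_{0}}{20\beta(1+\epsilon_{0})\log n}$; substituting $p^{1/2}=(m/(n_{1}n_{2}n_{3}))^{1/2}$ and $n_{1}n_{2}=n_{(1)}n_{(2)}$, the two factors of $m$ combine and the inequality rearranges to
$$m\gtrsim\frac{\mu_{1}r(1+\epsilon_{0})\kappa\beta^{1/2}}{\epsilon_{0}}\,n_{(2)}^{1/2}\log n\cdot\beta n_{(1)}n_{3}\log^{1/2}(n_{(1)}n_{3}),$$
i.e. the second term inside the maximum in \eqref{new3}. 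The only delicate point in the whole argument is exactly this bookkeeping: carrying the powers of $n_{(1)},n_{(2)},n_{3}$ and the $\log$ and $\beta$ factors correctly through the division by $\sigma_{\min}(\mathcal{X})$ and the elimination of $p^{1/2}$, so that the two requirements land on precisely the two terms of \eqref{new3}.

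Finally, when $m$ satisfies \eqref{new3} it dominates both of the above thresholds, so each of \eqref{e1}, \eqref{e2}, \eqref{e3} holds with high probability; a union bound over the polynomially small failure probabilities coming from Lemmas~\ref{le2}, \ref{le1} and \ref{lemm5} shows they hold simultaneously with high probability. On that event Theorem~\ref{themm} applies directly --- its induction argument propagates the smallness of $\|\mathcal{X}_{l}-\mathcal{X}\|_{F}/\sigma_{\min}(\mathcal{X})$ from $l=0$ to all $l$, so the hypothesis \eqref{e3} is only needed at the initializer --- and yields $\|\mathcal{X}_{l}-\mathcal{X}\|_{F}\leq v_{cg}^{\,l}\|\mathcal{X}_{0}-\mathcal{X}\|_{F}$ with $v_{cg}<1$, which is the claimed convergence of Algorithm~\ref{alg1} to $\mathcal{X}$.
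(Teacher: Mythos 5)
Your proposal is correct and follows essentially the same route as the paper: the paper's own proof of this theorem is a one-line reduction to Theorem~\ref{themm} via Lemma~\ref{lem1}, and your argument simply makes explicit the verification of \eqref{e1}--\eqref{e3} using Lemmas~\ref{le2}, \ref{le1} and \ref{lemm5} that the paper declares ``readily seen.'' Your bookkeeping correctly recovers both terms of \eqref{new3}, so nothing is missing.
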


\begin{proof}
It is readily seen that when \eqref{new3} is satisfied, the assumptions in Lemma \ref{lem1} hold. Thus this theorem follows immediately from Theorem \ref{themm}.
\end{proof}

\subsection{Resampling and Trimming}

In Theorem \ref{them2}, the sampling numbers depends on
$n^{\frac{5}{2}}$, however, in \cite{jiangm} the sampling numbers
can be reduced to $n^{2}$.
Then we need to find some new
initialization scheme to reduce the sampling numbers theoretically.
In this subsection, we generalize the trimming procedure which used in matrix
case \cite{wei} to tensor case, and the specific details can be found in Algorithm 3.
Moreover, when the resampling scheme
breaks the dependence between the past iterate and the new sampling set, we apply the tensor trimming method (Algorithm 4) to
to project the estimate onto the set of $\mu$-incoherent tensors. After that, we need to prove the output of Algorithm 3
reaches a neighborhood
of the original tensor where Theorem \ref{themm} is activated. First of all, we need the following lemmas.

\begin{algorithm}[h]
\caption{Initialization via Tensor Resampled Riemannian Gradient
Descent and Tensor Trimming} \label{alg2}
\textbf{Partition} $\Omega$ \text{into} $L+1$ equal groups: $\Omega_{0},\cdots, \Omega_{L};$ and the size of every group is denoted by $\hat{m}$. \\
\textbf{Set} $\mathcal{Z}_{0}=\ten{H}_{r}(\frac{\hat{m}}{n^3}R_{\Omega_{0}}(\mathcal{X}))$ \\
\textbf{for} $l=0,...,L-1$ do \\
~~1: $\hat{\mathcal{Z}}_{l}=\text{trim}(\mathcal{Z}_{l})$;\\
~~2: $\mathcal{Z}_{l+1}=\ten{H}_{r}(\hat{\mathcal{Z}}_{l}+\frac{n^{3}}{\hat{m}}P_{\hat{\ten{T}}_{l}}R_{\Omega_{l+1}}(\mathcal{X}-\hat{\mathcal{Z}}_{l}))$;\\
  \textbf{end for}\\
  \textbf{Output:} $\mathcal{X}_{0}=\mathcal{Z}_{l}$
\end{algorithm}

\begin{algorithm}[h]
\caption{ Tensor Trimming} \label{alg3}
\textbf{Input} $\mathcal{Z}_{l}=\mathcal{U}_{l}\ast\mathcal{S}_{\texttt{r}}\ast\mathcal{V}_{l}^{T}$. \\
\textbf{Output:} $\hat{\mathcal{Z}}_{l}=\mathcal{A}_{l}\ast\mathcal{S}_{\texttt{r}}\ast\mathcal{B}_{l}^{T},$ where, \\
~~~~~~~~~~~~~~~
$\mathcal{A}_{l}^{[i]}=\frac{\mathcal{U}_{l}^{[i]}}{\|\mathcal{U}_{l}^{[i]}\|_{F}}\min\{\|\mathcal{U}_{l}^{[i]}\|_{F},\sqrt{\frac{\mu r}{n_{1}}}\}$;
 $\mathcal{B}_{l}^{[i]}=\frac{\mathcal{V}_{l}^{[i]}}{\|\mathcal{V}_{l}^{[i]}\|_{F}}\min\{\|\mathcal{V}_{l}^{[i]}\|_{F},\sqrt{\frac{\mu r}{n_{2}}}\}.$\\
\end{algorithm}

\begin{lemma}\label{lemm4}
Let $\mathcal{X},\mathcal{X}_{l}\in \R^{n_{1}\times n_{2}\times n_{3}}$ be two tensors with $\rank_{t}(\mathcal{X})=\rank_{t}(\mathcal{X}_{l}) = \textbf{r} ~\text{and}~ \rank_{ct}(\mathcal{X})=\rank_{ct}(\mathcal{X}_{l}) = r$. Given their skinny t$_{c}$-SVD  $\mathcal{X}_{l}=\mathcal{U}_{l} \ast \mathcal{S}_{l}  \ast
\mathcal{V}^{T}_{l}$ and $\mathcal{X}=\mathcal{U} \ast \mathcal{S} \ast
\mathcal{V}^{T},$ let  $\ten{T}$ and $\ten{T}_{l}$  be the
tangent spaces of the fixed transformed  multi-rank  manifold at
$\mathcal{X},\mathcal{X}_{l}$, respectively. Assume $\Omega$ with $|\Omega|=m$
is a set of indices sampled independently and uniformly with
replacement. Suppose that
\begin{align*}
&\|P_{\mathcal{U}_{l}}\ast\tc{e}_i\|_{F}\leq \sqrt{\frac{\mu_{0} r}{n_1}},~ \|P_{\mathcal{V}_{l}}\ast\tc{e}_j\|_{F}\leq \sqrt{\frac{\mu_{0} r}{n_2}},~
 \|P_{\mathcal{U}}\ast\tc{e}_i\|_{F}\leq \sqrt{\frac{\mu_{0} r}{n_1}},~\text{and}~
\|P_{\mathcal{V}}\ast\tc{e}_j\|_{F}\leq \sqrt{\frac{\mu_{0} r}{n_2}}
\end{align*}
hold for all $1\leq i,j\leq n$.  Then for any $\beta>1$,
\begin{align*}
\Big\|\frac{n_{1}n_{2}n_{3}}{m}P_{\ten{T}_{l}}R_{\Omega}(P_{\mathcal{U}}-P_{\mathcal{U}_{l}})-P_{\ten{T}_{l}}(P_{\mathcal{U}}-P_{\mathcal{U}_{l}})\Big\|
\lesssim \sqrt{\frac{\beta \mu_{0} n_{(1)}n_{3} r \log (n_{(1)}n_{3})}{m}}
\end{align*}
holds with high probability provided $m\gtrsim \beta n_{(1)}n_{3} r \log n_{(1)}n_{3}$.
\end{lemma}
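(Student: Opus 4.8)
The plan is to use the sampling-with-replacement structure to express the centered operator as a sum of $m$ independent, mean-zero, bounded random operators and then apply a non-commutative Bernstein inequality. Write $p = m/(n_1n_2n_3)$ and let $\mathcal{H} = P_{\mathcal{U}} - P_{\mathcal{U}_l}$, viewed as a self-adjoint operator on $\R^{n_1\times n_2\times n_3}$ which, being a difference of two orthogonal projections, satisfies $\|\mathcal{H}\|_{\textup{op}}\le 1$. Since $\Omega$ is drawn with replacement, $R_{\Omega} = \sum_{s=1}^{m}\mathcal{E}_{a_sb_sc_s}\langle\mathcal{E}_{a_sb_sc_s},\cdot\rangle$ with the triples $(a_s,b_s,c_s)$ i.i.d.\ uniform on $\{1,\dots,n_1\}\times\{1,\dots,n_2\}\times\{1,\dots,n_3\}$, and $\E[\mathcal{E}_{abc}\langle\mathcal{E}_{abc},\cdot\rangle] = (n_1n_2n_3)^{-1}\OpId$. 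Using $\mathcal{H}^{*} = \mathcal{H}$ one checks that $p^{-1}P_{\ten{T}_l}R_{\Omega}\mathcal{H} - P_{\ten{T}_l}\mathcal{H} = \sum_{s=1}^{m}\mathcal{S}_s$, where $\mathcal{S}_s := p^{-1}\mathcal{K}_{a_sb_sc_s} - m^{-1}P_{\ten{T}_l}\mathcal{H}$ and $\mathcal{K}_{abc} := P_{\ten{T}_l}(\mathcal{E}_{abc})\langle\mathcal{H}(\mathcal{E}_{abc}),\cdot\rangle$, so that $\E[\mathcal{S}_s] = 0$; the quantity to bound is $\|\sum_s \mathcal{S}_s\|_{\textup{op}}$.

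First I would obtain the uniform bound. Each $\mathcal{K}_{abc}$ is rank one with $\|\mathcal{K}_{abc}\|_{\textup{op}} = \|P_{\ten{T}_l}(\mathcal{E}_{abc})\|_F\,\|\mathcal{H}(\mathcal{E}_{abc})\|_F$. Lemma \ref{lem6.3}, applied at the tangent space $\ten{T}_l$ (whose factors $\mathcal{U}_l,\mathcal{V}_l$ are incoherent by hypothesis), gives $\|P_{\ten{T}_l}(\mathcal{E}_{abc})\|_F^2 \lesssim \mu_0 r/n_{(2)}$; the intermediate estimate in the proof of Lemma \ref{lem6.3}, applied once to $\mathcal{U}$ and once to $\mathcal{U}_l$ together with the triangle inequality, gives $\|\mathcal{H}(\mathcal{E}_{abc})\|_F \le \|\mathcal{U}^{T}\ast\mathcal{E}_{abc}\|_F + \|\mathcal{U}_l^{T}\ast\mathcal{E}_{abc}\|_F \lesssim \sqrt{\mu_0 r/n_1}$. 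Since $n_1n_2n_3/\sqrt{n_1 n_{(2)}} = n_{(1)}n_3\sqrt{n_{(2)}/n_1} \le n_{(1)}n_3$, one gets $\|\mathcal{S}_s\|_{\textup{op}} \le p^{-1}\|\mathcal{K}_{a_sb_sc_s}\|_{\textup{op}} + m^{-1}\|\mathcal{H}\|_{\textup{op}} \lesssim \mu_0 r n_{(1)}n_3/m =: R$.

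Next I would bound the matrix variance. Expanding $\E[\mathcal{S}_s\mathcal{S}_s^{*}]$ and discarding the positive semidefinite mean-correction term yields $\E[\mathcal{S}_s\mathcal{S}_s^{*}] \preceq p^{-2}\E[\mathcal{K}_{abc}\mathcal{K}_{abc}^{*}]$, and a direct computation gives $\mathcal{K}_{abc}\mathcal{K}_{abc}^{*} = \|\mathcal{H}(\mathcal{E}_{abc})\|_F^2\,P_{\ten{T}_l}(\mathcal{E}_{abc})\langle P_{\ten{T}_l}(\mathcal{E}_{abc}),\cdot\rangle$. Averaging over $(a,b,c)$, using $\max_{abc}\|\mathcal{H}(\mathcal{E}_{abc})\|_F^2 \lesssim \mu_0 r/n_1$ and the completeness identity $\sum_{abc}P_{\ten{T}_l}(\mathcal{E}_{abc})\langle P_{\ten{T}_l}(\mathcal{E}_{abc}),\cdot\rangle = P_{\ten{T}_l}(\sum_{abc}\mathcal{E}_{abc}\langle\mathcal{E}_{abc},\cdot\rangle)P_{\ten{T}_l} = P_{\ten{T}_l}$ (a contraction), one gets $\|\sum_s\E[\mathcal{S}_s\mathcal{S}_s^{*}]\|_{\textup{op}} \lesssim \mu_0 r n_2 n_3/m \le \mu_0 r n_{(1)}n_3/m$. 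Symmetrically, $\mathcal{K}_{abc}^{*}\mathcal{K}_{abc} = \|P_{\ten{T}_l}(\mathcal{E}_{abc})\|_F^2\,\mathcal{H}(\mathcal{E}_{abc})\langle\mathcal{H}(\mathcal{E}_{abc}),\cdot\rangle$ and $\sum_{abc}\mathcal{H}(\mathcal{E}_{abc})\langle\mathcal{H}(\mathcal{E}_{abc}),\cdot\rangle = \mathcal{H}^{2}$ (also a contraction), so $\|\sum_s\E[\mathcal{S}_s^{*}\mathcal{S}_s]\|_{\textup{op}} \lesssim \mu_0 r n_1n_2n_3/(m n_{(2)}) = \mu_0 r n_{(1)}n_3/m$. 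Hence the variance proxy $\sigma^{2} := \max\{\|\sum_s\E[\mathcal{S}_s\mathcal{S}_s^{*}]\|_{\textup{op}},\|\sum_s\E[\mathcal{S}_s^{*}\mathcal{S}_s]\|_{\textup{op}}\} \lesssim \mu_0 r n_{(1)}n_3/m$.

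Finally, I would apply a non-commutative Bernstein inequality (of the type used in \cite{Recht2011,jiangm}) to $\sum_{s=1}^{m}\mathcal{S}_s$ with parameters $R$ and $\sigma^{2}$; since the ranges of the $\mathcal{S}_s$ lie in $\ten{T}_l$ and all the relevant dimensions are polynomial in $n$, the logarithmic dimension factor is $\lesssim \log(n_{(1)}n_3)$. This produces, with high probability, $\|\sum_s\mathcal{S}_s\|_{\textup{op}} \lesssim \sqrt{\beta\sigma^{2}\log(n_{(1)}n_3)} + \beta R\log(n_{(1)}n_3) \lesssim \sqrt{\beta\mu_0 r n_{(1)}n_3\log(n_{(1)}n_3)/m} + \beta\mu_0 r n_{(1)}n_3\log(n_{(1)}n_3)/m$, and under the stated condition $m \gtrsim \beta n_{(1)}n_3 r\log(n_{(1)}n_3)$ the second term is dominated by the first, which is exactly the asserted bound. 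The step requiring the most care is the variance estimate: one must verify that both matrix-variance sums collapse, via $\sum_{abc}\mathcal{E}_{abc}\langle\mathcal{E}_{abc},\cdot\rangle = \OpId$, to the contractions $P_{\ten{T}_l}$ and $\mathcal{H}^{2}$ without accruing spurious factors of $r$ or $n$, and that the $\max/\min$ bookkeeping ($n_1n_2n_3/n_{(2)} = n_{(1)}n_3$ and $n_1n_2n_3/\sqrt{n_1 n_{(2)}} \le n_{(1)}n_3$) goes through. The one genuinely new ingredient relative to Lemmas \ref{lem6.3} and \ref{le1} is that $\mathcal{H}$ is a \emph{difference} of projections at two different points of $\ten{M}_{\textbf{r}}$, so the incoherence hypotheses on both $\mathcal{U},\mathcal{U}_l$ and on $\mathcal{V},\mathcal{V}_l$ are needed.
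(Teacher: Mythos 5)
Your proposal is correct and follows essentially the same route as the paper: both decompose $P_{\ten{T}_l}R_\Omega(P_{\mathcal{U}}-P_{\mathcal{U}_l})$ into a sum of i.i.d.\ rank-one operators $\mathcal{Z}\mapsto\langle \mathcal{Z},(P_{\mathcal{U}}-P_{\mathcal{U}_l})(\Eabck)\rangle P_{\ten{T}_l}(\Eabck)$, bound the uniform norm and the two matrix variances by $\mu_0 r/n_{(2)}$-type quantities via Lemma \ref{lem6.3} and the incoherence hypotheses on both pairs of factors, and conclude with the non-commutative Bernstein inequality. Your bookkeeping (the collapse of the variance sums to the contractions $P_{\ten{T}_l}$ and $(P_{\mathcal{U}}-P_{\mathcal{U}_l})^2$, and the reductions $n_1n_2n_3/n_{(2)}=n_{(1)}n_3$) matches the paper's computation.
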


\begin{lemma}\label{lemm2}
Let $\mathcal{U}_{l},~\mathcal{U}\in \R^{n_{1}\times r\times n_{3}}$
be two orthogonal tensors. Then there exist an
orthogonal tensor $\mathcal{Q}\in \R^{r\times r \times n_{3}}$ such that
\begin{align*}
\|\mathcal{U}_{l}-\mathcal{U}\ast \mathcal{Q}\|_{F}\leq
\|\mathcal{U}_{l}\ast\mathcal{U}_{l}^{T}-\mathcal{U}\ast
\mathcal{U}^{T}\|_{F}.
\end{align*}
\end{lemma}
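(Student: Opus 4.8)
The plan is to reduce the statement to a slice-by-slice computation in the DCT transform domain, where it becomes the classical ``orthogonal Procrustes'' bound for matrices. First I would invoke the norm-preserving identity recorded just after Definition \ref{def3}, namely $\langle\mathcal{A},\mathcal{B}\rangle=\langle\overline{\mathcal{A}},\overline{\mathcal{B}}\rangle$, together with $\widehat{\mathcal{A}\ast\mathcal{B}}^{(i)}=\widehat{\mathcal{A}}^{(i)}\widehat{\mathcal{B}}^{(i)}$ (immediate from Definition \ref{def1}), to rewrite both Frobenius norms appearing in the claim blockwise:
\begin{align*}
\|\mathcal{U}_{l}-\mathcal{U}\ast\mathcal{Q}\|_{F}^{2} &=\sum_{i=1}^{n_{3}}\big\|\widehat{\mathcal{U}}_{l}^{(i)}-\widehat{\mathcal{U}}^{(i)}\widehat{\mathcal{Q}}^{(i)}\big\|_{F}^{2},\\
\|\mathcal{U}_{l}\ast\mathcal{U}_{l}^{T}-\mathcal{U}\ast\mathcal{U}^{T}\|_{F}^{2} &=\sum_{i=1}^{n_{3}}\big\|\widehat{\mathcal{U}}_{l}^{(i)}(\widehat{\mathcal{U}}_{l}^{(i)})^{T}-\widehat{\mathcal{U}}^{(i)}(\widehat{\mathcal{U}}^{(i)})^{T}\big\|_{F}^{2}.
\end{align*}
It therefore suffices to produce, for every $i$, an $r\times r$ orthogonal matrix $Q_{i}$ satisfying the corresponding termwise inequality, and then to set $\mathcal{Q}:=\idct[\widehat{\mathcal{Q}}]$ with $\widehat{\mathcal{Q}}^{(i)}:=Q_{i}$. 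Since each $Q_{i}$ is real and orthogonal and the inverse DCT maps real tensors to real tensors, $\mathcal{Q}$ is a genuine real tensor and $\mathcal{Q}^{T}\ast\mathcal{Q}$ has transform-domain slices $Q_{i}^{T}Q_{i}=I_{r}$, so $\mathcal{Q}$ is an orthogonal tensor in $\R^{r\times r\times n_{3}}$; this is exactly the point at which using a real transform (DCT) rather than the FFT matters.

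For the termwise step, fix $i$ and put $A=\widehat{\mathcal{U}}_{l}^{(i)}$ and $B=\widehat{\mathcal{U}}^{(i)}$, both in $\R^{n_{1}\times r}$. Because $\mathcal{U}_{l},\mathcal{U}$ are orthogonal tensors in the skinny $t_{c}$-SVD sense, $A^{T}A$ and $B^{T}B$ are diagonal $0$-$1$ projections (the transform-domain frontal slices of the relevant $\mathcal{I}_{\textbf{r}}$), so $A$ and $B$ have spectral norm at most one and $\|A\|_{F}^{2},\|B\|_{F}^{2}$ equal their ranks. Take a thin SVD $B^{T}A=W\Sigma V^{T}$ with singular values $\sigma_{j}\in[0,1]$, and let $Q_{i}$ be $WV^{T}$ completed to an $r\times r$ orthogonal matrix (the completion is harmless, since it acts only on columns of $B$ that vanish). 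A direct expansion using $\|BQ_{i}\|_{F}^{2}=\|B\|_{F}^{2}$ and $\operatorname{Tr}(Q_{i}^{T}B^{T}A)=\operatorname{Tr}(\Sigma)$ gives $\|A-BQ_{i}\|_{F}^{2}=\|A\|_{F}^{2}+\|B\|_{F}^{2}-2\sum_{j}\sigma_{j}$, while $\|AA^{T}-BB^{T}\|_{F}^{2}=\|A\|_{F}^{2}+\|B\|_{F}^{2}-2\sum_{j}\sigma_{j}^{2}$; the termwise inequality is then immediate from $\sigma_{j}^{2}\le\sigma_{j}$ on $[0,1]$. Summing over $i$ and passing back through the DCT yields the claimed bound.

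The argument is essentially routine once the transform-domain reduction is in place; the only bookkeeping that needs care is checking that the per-slice rotations $Q_{i}$ can be chosen orthogonal of the full size $r$ (so that the assembled $\mathcal{Q}$ is orthogonal as a square tensor), which is why I complete $WV^{T}$ with an identity block on the subspace annihilated by $B$. I do not anticipate a genuine obstacle here: this is simply the tensor transcription of the standard matrix fact $\min_{Q}\|U_{1}-U_{2}Q\|_{F}\le\|U_{1}U_{1}^{T}-U_{2}U_{2}^{T}\|_{F}$ for $U_{1},U_{2}$ with orthonormal columns, lifted slicewise through the (real, orthogonal) DCT.
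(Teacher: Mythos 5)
Your proof is correct and is essentially the paper's argument: both reduce the claim to the orthogonal Procrustes bound by taking the (t$_c$-)SVD of the cross term $\mathcal{U}^{T}\ast\mathcal{U}_{l}$, choosing $\mathcal{Q}$ from its singular factors, and exploiting $\sigma_{j}^{2}\le\sigma_{j}$ for singular values in $[0,1]$. Your slicewise transform-domain rendering simply makes explicit the details the paper leaves implicit (the norm expansions, the completion of $WV^{T}$ to a full orthogonal matrix, and why the singular values are bounded by one).
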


\begin{lemma} \label{lemm3}
Suppose that $\mathcal{Z}_{l}\in \R^{n_{1}\times n_{2}\times n_{3}}$ satisfies $\mathcal{Z}_{l}=\mathcal{U}_{l}\ast \mathcal{S}_{l}\ast
\mathcal{V}_{l}^{T}$ and
$$\|\mathcal{Z}_{l}-\mathcal{X}\|_{F}\leq \frac{\sigma_{min}(\mathcal{X})}{10\sqrt{2}},$$
where $rank_{t}(\mathcal{U}_{l})=rank_{t}(\mathcal{V}_{l})=rank_{t}(\mathcal{Z}_{l})= \textbf{r}$.
Then for $1\leq i\leq n_1, 1\leq j\leq n_2$ the tensor $\hat{\mathcal{Z}}_{l}=\hat{\mathcal{U}}_{l}\ast\hat{\mathcal{S}}_{\textbf{r}}\ast\hat{\mathcal{V}}_{l}^{T}$ returned by Algorithm 3
satisfies
\begin{align*}
\|\hat{\mathcal{U}}_{l}\ast
\tc{e}_i\|_{F}\leq\frac{10}{9}\sqrt{\frac{\mu_{0} r}{n_{1}}}~\text{and}~\|\hat{\mathcal{V}}_{l}\ast
\tc{e}_j\|_{F}\leq\frac{10}{9}\sqrt{\frac{\mu_{0} r}{n_{2}}}.
\end{align*}
Moreover, lettiing
$\kappa=\frac{\sigma_{max}(\mathcal{X})}{\sigma_{min}(\mathcal{X})}$,
then
\begin{align*}
\|\hat{\mathcal{Z}}_{l}-\mathcal{X}\|_{F}\leq 8\kappa
\|\mathcal{Z}_{l}-\mathcal{X}\|_{F}.
\end{align*}
\end{lemma}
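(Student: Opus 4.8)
The plan is to establish the two assertions in sequence — first the incoherence of the trimmed factors $\hat{\mathcal{U}}_l,\hat{\mathcal{V}}_l$, then the distance bound $\|\hat{\mathcal{Z}}_l-\mathcal{X}\|_F\le 8\kappa\|\mathcal{Z}_l-\mathcal{X}\|_F$ — and throughout I would pass to the block‑diagonal (transform‑domain) representation $\overline{(\cdot)}$, so that the $t_c$‑product becomes ordinary matrix multiplication, the relation $\mathcal{U}_l^T\ast\mathcal{U}_l=\mathcal{I}_{\textbf{r}}$ becomes blockwise orthonormality of $\overline{\mathcal{U}_l}$, and (since the DCT acts orthogonally on each tube) the trimming of Algorithm~3 becomes a common rescaling, for each fixed $i$, of the rows of $\overline{\mathcal{U}_l}$ coming from index $i$ across all frontal blocks. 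The only elementary fact I need from this bookkeeping is that $\|\mathcal{U}_l^T\ast\tc{e}_i\|_F$ equals, up to the fixed normalization relating the transform domain to the original one, the Frobenius norm of the horizontal slice $\mathcal{U}_l^{[i]}$, so that the threshold used in Algorithm~3 is precisely calibrated to the incoherence quantity in \eqref{eq17}; the same applies to $\mathcal{V}_l$.

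For the incoherence part, write $\mathcal{A}_l$ (resp. $\mathcal{B}_l$) for the trimmed counterpart of $\mathcal{U}_l$ (resp. $\mathcal{V}_l$). By construction $\|\mathcal{A}_l^T\ast\tc{e}_i\|_F\le\sqrt{\mu_0 r/n_1}$ for all $i$. The heart of the argument is that trimming perturbs $\mathcal{U}_l$ only slightly, namely $\|\mathcal{A}_l-\mathcal{U}_l\|_F\le\|\mathcal{U}_l\ast\mathcal{U}_l^T-\mathcal{U}\ast\mathcal{U}^T\|_F$: a slice $i$ is altered only when $\|\mathcal{U}_l^{[i]}\|_F$ exceeds the threshold, and then the amount removed is $\|\mathcal{U}_l^{[i]}\|_F-(\text{threshold})$; bounding $(\text{threshold})$ below by $\|\mathcal{U}^{[i]}\|_F$ via the incoherence of the target $\mathcal{U}$ and using the reverse triangle inequality shows this is at most the $i$‑th ``slice norm'' of $\mathcal{U}_l\ast\mathcal{U}_l^T-\mathcal{U}\ast\mathcal{U}^T$ (recall $\mathcal{U}_l$ is a partial isometry), and squaring and summing over $i$ telescopes to $\|\mathcal{U}_l\ast\mathcal{U}_l^T-\mathcal{U}\ast\mathcal{U}^T\|_F^2$. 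Combined with Lemma~\ref{lem5}(iii) and the hypothesis $\|\mathcal{Z}_l-\mathcal{X}\|_F\le\sigma_{min}(\mathcal{X})/(10\sqrt2)$ this gives $\|\mathcal{A}_l-\mathcal{U}_l\|_F\le 1/10$, hence by Weyl's inequality $\sigma_{min}(\mathcal{A}_l)\ge 9/10$ (and likewise $\sigma_{min}(\mathcal{B}_l)\ge 9/10$); in particular $\mathcal{A}_l,\mathcal{B}_l$ keep transformed multi‑rank $\textbf{r}$, so $\hat{\mathcal{Z}}_l=\mathcal{A}_l\ast\mathcal{S}_{\textbf{r}}\ast\mathcal{B}_l^T$ still has rank $\textbf{r}$ and, since $\mathcal{S}_{\textbf{r}}\ast\mathcal{B}_l^T$ has full row‑rank, the column space of $\hat{\mathcal{Z}}_l$ coincides with that of $\mathcal{A}_l$. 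Therefore its $t_c$‑SVD factor obeys $\hat{\mathcal{U}}_l\ast\hat{\mathcal{U}}_l^T=\mathcal{P}_{\mathcal{A}_l}=\mathcal{A}_l\ast(\mathcal{A}_l^T\ast\mathcal{A}_l)^{-1}\ast\mathcal{A}_l^T$, and using $\|\mathcal{A}_l\ast(\mathcal{A}_l^T\ast\mathcal{A}_l)^{-1}\|\le\sigma_{min}(\mathcal{A}_l)^{-1}$ I get $\|\hat{\mathcal{U}}_l^T\ast\tc{e}_i\|_F=\|\mathcal{P}_{\mathcal{A}_l}\ast\tc{e}_i\|_F\le\sigma_{min}(\mathcal{A}_l)^{-1}\|\mathcal{A}_l^T\ast\tc{e}_i\|_F\le\tfrac{10}{9}\sqrt{\mu_0 r/n_1}$, with the bound for $\hat{\mathcal{V}}_l$ obtained identically from Lemma~\ref{lem5}(iv).

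For the distance bound I would write $\hat{\mathcal{Z}}_l-\mathcal{Z}_l=(\mathcal{A}_l-\mathcal{U}_l)\ast\mathcal{S}_{\textbf{r}}\ast\mathcal{B}_l^T+\mathcal{U}_l\ast\mathcal{S}_{\textbf{r}}\ast(\mathcal{B}_l-\mathcal{V}_l)^T$ and apply the submultiplicativity $\|\mathcal{C}\ast\mathcal{D}\|_F\le\min\{\|\mathcal{C}\|_F\|\mathcal{D}\|,\|\mathcal{C}\|\|\mathcal{D}\|_F\}$ (immediate from the block‑diagonal form). Since trimming only scales rows down, $\|\mathcal{B}_l\|\le\|\mathcal{V}_l\|=1$ and $\|\mathcal{U}_l\|=1$, while $\|\mathcal{S}_{\textbf{r}}\|=\sigma_{max}(\mathcal{Z}_l)\le\sigma_{max}(\mathcal{X})+\|\mathcal{Z}_l-\mathcal{X}\|\le(1+\tfrac1{10\sqrt2})\sigma_{max}(\mathcal{X})$. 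Feeding in $\|\mathcal{A}_l-\mathcal{U}_l\|_F,\|\mathcal{B}_l-\mathcal{V}_l\|_F\le\sqrt2\,\|\mathcal{Z}_l-\mathcal{X}\|_F/\sigma_{min}(\mathcal{X})$ from the previous step yields $\|\hat{\mathcal{Z}}_l-\mathcal{Z}_l\|_F\le 2\sqrt2\,(1+\tfrac1{10\sqrt2})\,\kappa\,\|\mathcal{Z}_l-\mathcal{X}\|_F<3.1\,\kappa\,\|\mathcal{Z}_l-\mathcal{X}\|_F$, and the triangle inequality gives $\|\hat{\mathcal{Z}}_l-\mathcal{X}\|_F\le(3.1\kappa+1)\|\mathcal{Z}_l-\mathcal{X}\|_F\le 8\kappa\|\mathcal{Z}_l-\mathcal{X}\|_F$ since $\kappa\ge1$.

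I expect the genuinely delicate point to be the inequality $\|\mathcal{A}_l-\mathcal{U}_l\|_F\le\|\mathcal{U}_l\ast\mathcal{U}_l^T-\mathcal{U}\ast\mathcal{U}^T\|_F$. The naive estimate $\|\mathcal{A}_l-\mathcal{U}_l\|\le\max_i c_i$ (the largest relative rescaling) is useless because a single over‑scaled slice can push this close to $1$; one must argue slicewise and crucially exploit the incoherence of the \emph{target} $\mathcal{U}$ to absorb the trimming threshold before summing against the Frobenius norm of the projector difference. Everything downstream — that $\sigma_{min}(\mathcal{A}_l)\ge 9/10$ (so the rank is preserved and the projector identity $\hat{\mathcal{U}}_l\ast\hat{\mathcal{U}}_l^T=\mathcal{P}_{\mathcal{A}_l}$ is legitimate), the conversion between horizontal‑slice norms and $\|\cdot\ast\tc{e}_i\|_F$ with the correct powers of $n_3$, and the norm algebra in the last paragraph — is bookkeeping rather than substance.
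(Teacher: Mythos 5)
Your proof is correct and reaches the stated bounds, but it deviates from the paper's argument at the one step you correctly identify as delicate. The paper does \emph{not} bound $\|\mathcal{A}_l-\mathcal{U}_l\|_F$ by the projector difference; instead it first invokes Lemma~\ref{lemm2} to produce an orthogonal $\mathcal{Q}_u$ with $\|\mathcal{U}_l-\mathcal{U}\ast\mathcal{Q}_u\|_F\le \sqrt{2}d/\sigma_{\min}(\mathcal{X})$, observes that each slice of $\mathcal{U}\ast\mathcal{Q}_u$ already lies inside the trimming ball, and uses that trimming is slicewise a contraction onto that ball to get $\|\mathcal{A}_l-\mathcal{U}\ast\mathcal{Q}_u\|_F\le\|\mathcal{U}_l-\mathcal{U}\ast\mathcal{Q}_u\|_F$; the distance bound is then a three-term telescoping through $(\mathcal{U}\ast\mathcal{Q}_u)\ast(\mathcal{Q}_u^T\ast\mathcal{S}\ast\mathcal{Q}_v)\ast(\mathcal{V}\ast\mathcal{Q}_v)^T$, which additionally requires the estimate $\|\mathcal{S}_l-\mathcal{Q}_u^T\ast\mathcal{S}\ast\mathcal{Q}_v\|_F\le 4\kappa d$. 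Your route replaces both ingredients: the reverse-triangle argument on slice norms against $\|\mathcal{U}_l\ast\mathcal{U}_l^T-\mathcal{U}\ast\mathcal{U}^T\|_F$ (using that trimming removes exactly $\max\{0,\|\mathcal{U}_l^{[i]}\|_F-\tau\}$ and that $\tau$ dominates $\|\mathcal{U}^{[i]}\|_F$) bounds $\|\mathcal{A}_l-\mathcal{U}_l\|_F$ directly and dispenses with Lemma~\ref{lemm2} entirely, and the two-term split of $\hat{\mathcal{Z}}_l-\mathcal{Z}_l$ sharing the common middle factor $\mathcal{S}_l$ avoids the core-tensor comparison and gives a sharper constant ($\approx 4\kappa$ versus $8\kappa$). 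Your projector identity $\hat{\mathcal{U}}_l\ast\hat{\mathcal{U}}_l^T=\mathcal{A}_l\ast(\mathcal{A}_l^T\ast\mathcal{A}_l)^{-1}\ast\mathcal{A}_l^T$ is equivalent to the paper's QR factorization step, and both hinge on the same fact $\sigma_{\min}(\mathcal{A}_l)\ge 9/10$. The only caveat is the normalization relating $\|\mathcal{U}^{[i]}\|_F$ to $\|\mathcal{U}^T\ast\tc{e}_i\|_F$ (a factor of $\sqrt{n_3}$ under Definition~\ref{defn}), which you flag but do not resolve; the paper's own proof makes the same silent identification when it asserts $\|(\mathcal{U}\ast\mathcal{Q}_u)^{[i]}\|_F\le\sqrt{\mu_0 r/n_1}$, so this is a shared convention rather than a gap in your argument.
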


With the tools in hand, we can prove the following lemma which plays a key role in deciding the bound on the number of sample entries required for tensor completion.

\begin{lemma}\label{lemm6}
Suppose $\mathcal{X}\in \R^{n_{1}\times n_{2}\times n_{3}}$ with $\rank_{t}(\mathcal{X}) = \textbf{r} ~\text{and}~ \rank_{ct}(\mathcal{X}) = r$, $\kappa$ is the condition number of $\mathcal{X}$ and $L$ is defined as in Algorithm 3.  Then the output of Algorithm 3 satisfies
\begin{align*}
\|\mathcal{X}_{0}-\mathcal{X}\|_{F}\leq
\left(\frac{5}{6}\right)^{L}\frac{\sigma_{min}(\mathcal{X})}{256\kappa^{2}}
\end{align*}
with high probability
provided
\begin{align*}
\hat{m} \gtrsim
\max\{\mu_0\beta r,\mu_{1}^{2}r^2\kappa^{6}\}n_{(1)}n_{3}\log(n_{(1)}n_{3}).
\end{align*}
\end{lemma}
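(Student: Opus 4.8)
The plan is to argue by induction on the stage index $l$ that
\begin{equation*}
\|\mathcal{Z}_{l}-\mathcal{X}\|_{F}\leq\Big(\tfrac{5}{6}\Big)^{l}\,\frac{\sigma_{min}(\mathcal{X})}{256\kappa^{2}},\qquad l=0,1,\dots,L,
\end{equation*}
which in particular keeps every iterate inside the ball $\|\mathcal{Z}_{l}-\mathcal{X}\|_{F}\leq\sigma_{min}(\mathcal{X})/(10\sqrt{2})$ where the trimming estimates of Lemma~\ref{lemm3} apply; taking $l=L$ and using that the output is $\mathcal{X}_{0}=\mathcal{Z}_{L}$ then gives the assertion. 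For the base case I would apply Lemma~\ref{lemm5} to the first group $\Omega_{0}$ (of size $\hat m$): it gives $\|\mathcal{Z}_{0}-\mathcal{X}\|_{F}\lesssim\sqrt{\mu_{1}^{2}r^{2}\beta n_{(1)}n_{3}\log(n_{(1)}n_{3})/\hat m}\,\|\mathcal{X}\|$, and since $\|\mathcal{X}\|=\sigma_{max}(\mathcal{X})=\kappa\,\sigma_{min}(\mathcal{X})$, forcing this below $\sigma_{min}(\mathcal{X})/(256\kappa^{2})$ is exactly what produces the $\mu_{1}^{2}r^{2}\kappa^{6}n_{(1)}n_{3}\log(n_{(1)}n_{3})$ half of the stated sample condition (one $\kappa$ from $\|\mathcal{X}\|$ and $\kappa^{2}$ from the target, squared under the root).

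For the inductive step, assume the bound at stage $l$. I would first pass to the trimmed iterate $\hat{\mathcal{Z}}_{l}=\mathrm{trim}(\mathcal{Z}_{l})$: by Lemma~\ref{lemm3}, $\hat{\mathcal{Z}}_{l}$ still has transformed multi-rank $\textbf{r}$, obeys the incoherence conditions \eqref{eq17} with a parameter comparable to $\mu_{0}$, and satisfies $\|\hat{\mathcal{Z}}_{l}-\mathcal{X}\|_{F}\leq 8\kappa\|\mathcal{Z}_{l}-\mathcal{X}\|_{F}$. The decisive structural fact is that $\hat{\mathcal{Z}}_{l}$, and hence its tangent space $\hat{\ten{T}}_{l}$, depends only on $\Omega_{0},\dots,\Omega_{l}$ and is independent of the fresh group $\Omega_{l+1}$; conditioning on $\hat{\mathcal{Z}}_{l}$ one may invoke Lemmas~\ref{le1} and~\ref{lemm4} with $\mathcal{X}\mapsto\hat{\mathcal{Z}}_{l}$, $\ten{T}\mapsto\hat{\ten{T}}_{l}$, $\Omega\mapsto\Omega_{l+1}$, to conclude that $\|P_{\hat{\ten{T}}_{l}}-\hat p^{-1}P_{\hat{\ten{T}}_{l}}R_{\Omega_{l+1}}P_{\hat{\ten{T}}_{l}}\|_{\mathrm{op}}$ and the mixed operator in Lemma~\ref{lemm4} are both $\lesssim\delta:=\sqrt{\mu_{0}\beta r n_{(1)}n_{3}\log(n_{(1)}n_{3})/\hat m}$ on a high-probability event, provided $\hat m\gtrsim\mu_{0}\beta r n_{(1)}n_{3}\log(n_{(1)}n_{3})$; here $\hat p=\hat m/n^{3}$.

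Now set $\mathcal{W}_{l}=\hat{\mathcal{Z}}_{l}+\hat p^{-1}P_{\hat{\ten{T}}_{l}}R_{\Omega_{l+1}}(\mathcal{X}-\hat{\mathcal{Z}}_{l})$, so that $\mathcal{Z}_{l+1}=\ten{H}_{r}(\mathcal{W}_{l})$; since $\ten{H}_{r}$ returns a best transformed-rank-$\textbf{r}$ approximation (Proposition~\ref{prop3}) and $\mathcal{X}$ has rank $\textbf{r}$, $\|\mathcal{Z}_{l+1}-\mathcal{X}\|_{F}\leq 2\|\mathcal{W}_{l}-\mathcal{X}\|_{F}$. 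Using $(\mathcal{I}-P_{\hat{\ten{T}}_{l}})\hat{\mathcal{Z}}_{l}=0$ I would split
\begin{align*}
\mathcal{W}_{l}-\mathcal{X}&=\big(P_{\hat{\ten{T}}_{l}}-\hat p^{-1}P_{\hat{\ten{T}}_{l}}R_{\Omega_{l+1}}P_{\hat{\ten{T}}_{l}}\big)(\hat{\mathcal{Z}}_{l}-\mathcal{X})-(\mathcal{I}-P_{\hat{\ten{T}}_{l}})\mathcal{X}\\
&\quad+\hat p^{-1}P_{\hat{\ten{T}}_{l}}R_{\Omega_{l+1}}(\mathcal{I}-P_{\hat{\ten{T}}_{l}})\mathcal{X}.
\end{align*}
The first term is $\leq\delta\|\hat{\mathcal{Z}}_{l}-\mathcal{X}\|_{F}\leq 8\kappa\delta\|\mathcal{Z}_{l}-\mathcal{X}\|_{F}$ by Lemma~\ref{le1}; the second is $\leq\|\hat{\mathcal{Z}}_{l}-\mathcal{X}\|_{F}^{2}/\sigma_{min}(\mathcal{X})\leq(8\kappa)^{2}\|\mathcal{Z}_{l}-\mathcal{X}\|_{F}^{2}/\sigma_{min}(\mathcal{X})$ by Lemma~\ref{lem5}(v), which the inductive hypothesis bounds by $\tfrac{1}{4}\|\mathcal{Z}_{l}-\mathcal{X}\|_{F}$; for the third term one rewrites $(\mathcal{I}-P_{\hat{\ten{T}}_{l}})\mathcal{X}$ through the column/row projection differences $P_{\mathcal{U}}-P_{\hat{\mathcal{U}}_{l}}$ and $P_{\mathcal{V}}-P_{\hat{\mathcal{V}}_{l}}$ (aligning the factors by Lemma~\ref{lemm2} and estimating sizes by Lemma~\ref{lem5}(iii)--(v)) and applies Lemma~\ref{lemm4} to replace $\hat p^{-1}P_{\hat{\ten{T}}_{l}}R_{\Omega_{l+1}}$ by $P_{\hat{\ten{T}}_{l}}$ up to an $O(\delta)$ operator error, again producing a bound of the form $O\big(\kappa\delta+\kappa^{2}\|\mathcal{Z}_{l}-\mathcal{X}\|_{F}/\sigma_{min}(\mathcal{X})\big)\|\mathcal{Z}_{l}-\mathcal{X}\|_{F}$. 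Collecting the three estimates and choosing the hidden constants so that $8\kappa\delta$ and the residual second-order terms are each a small fraction — this is where an extra $\kappa^{2}$ enters the per-stage sample count, dominated by the $\mu_{1}^{2}r^{2}\kappa^{6}$ term of the stated condition — yields $\|\mathcal{W}_{l}-\mathcal{X}\|_{F}\leq\tfrac{5}{12}\|\mathcal{Z}_{l}-\mathcal{X}\|_{F}$, hence $\|\mathcal{Z}_{l+1}-\mathcal{X}\|_{F}\leq\tfrac{5}{6}\|\mathcal{Z}_{l}-\mathcal{X}\|_{F}$, closing the induction. A union bound over the $L$ resampling stages then controls the total failure probability.

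The step I expect to be the main obstacle is the control of the mixed term $\hat p^{-1}P_{\hat{\ten{T}}_{l}}R_{\Omega_{l+1}}(\mathcal{I}-P_{\hat{\ten{T}}_{l}})\mathcal{X}$: one has to express $(\mathcal{I}-P_{\hat{\ten{T}}_{l}})\mathcal{X}$ as a genuinely second-order quantity in differences of column/row projections, carry the $t_{c}$-product and DCT block-diagonal bookkeeping through Lemmas~\ref{lemm2}, \ref{lemm4} and~\ref{lem5}, and do this while tracking the powers of $\kappa$ introduced by the trimming step carefully enough that the per-stage sampling requirement never exceeds $\max\{\mu_{0}\beta r,\mu_{1}^{2}r^{2}\kappa^{6}\}n_{(1)}n_{3}\log(n_{(1)}n_{3})$; this $\kappa$-accounting, rather than any single inequality, is the delicate part.
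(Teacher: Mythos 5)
Your proposal follows essentially the same route as the paper: the same induction with the same three-term splitting of $\mathcal{W}_{l}-\mathcal{X}$ (the paper's $I_{5},I_{6},I_{7}$, using $(\mathcal{I}-P_{\hat{\ten{T}}_{l}})\hat{\mathcal{Z}}_{l}=0$), the same invocations of Lemmas~\ref{lemm3}, \ref{le1}, \ref{lem5}(v) and \ref{lemm4} with the resampling independence of $\Omega_{l+1}$, and the same base-case treatment and $\kappa$-accounting that makes the $\mu_{1}^{2}r^{2}\kappa^{6}$ term dominate. The argument is correct and matches the paper's proof in all essentials.
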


\begin{proof}
Assume that
\begin{align} \label{eq3}
\|\mathcal{Z}_{l}-\mathcal{X}\|_{F}\leq
\frac{\sigma_{min}(\mathcal{X})}{256\kappa^{2}}.
\end{align}
It follows from Lemma \ref{lemm3} that $\mathcal{\hat{Z}}_{l}$ is an
incoherent tensor with incoherence parameter $\frac{100}{81}\mu$ and
\begin{align*}
\|\mathcal{\hat{Z}}_{l}-\mathcal{X}\|_{F}\leq 8\kappa
\|\mathcal{Z}_{l}-\mathcal{X}\|_{F}.
\end{align*}
The approximation error at the $(l+1)th$ iteration can be
decomposed as
\begin{align*}
\|\mathcal{Z}_{l+1}-\mathcal{X}\|_{F} &\leq
2\|(P_{\mathcal{\hat{T}}_{l}}-\frac{n^{2}}{\hat{m}}P_{\mathcal{\hat{T}}_{l}}P_{\Omega_{l+1}}P_{\mathcal{\hat{T}}_{l}})(\mathcal{\hat{Z}}_{l}-\mathcal{X})\|_{F}\\
&+2\|(\mathcal{I}-P_{\mathcal{\hat{T}}_{l}})(\mathcal{\hat{Z}}_{l}-\mathcal{X})\|_{F}
+2\|\frac{n^{2}}{\hat{m}}P_{\mathcal{\hat{T}}_{l}}P_{\Omega_{l+1}}(\mathcal{I}-P_{\mathcal{\hat{T}}_{l}})(\mathcal{\hat{Z}}_{l}-\mathcal{X})\|_{F}\\
&:=I_{5}+I_{6}+I_{7}.
\end{align*}
It follows Lemma \ref{le1} that
\begin{align*}
\|P_{\mathcal{\hat{T}}_{l}}-\frac{n^{2}}{\hat{m}}P_{\mathcal{\hat{T}}_{l}}P_{\Omega_{l+1}}P_{\mathcal{\hat{T}}_{l}}\|\lesssim
\sqrt{\frac{\mu_{0}  r n_{(1)}n_{3}\beta
\log(n_{(1)}n_{3})}{\hat{m}}}
\end{align*}
with high probability. Thus
\begin{align*}
I_{5}\lesssim
\kappa\sqrt{\frac{\mu_{0}  r n_{(1)}n_{3}\beta
\log(n_{(1)}n_{3})}{\hat{m}}}\|\mathcal{Z}_{l}-\mathcal{X}\|_{F}.
\end{align*}
By Lemma \ref{lem5}, we have
\begin{align*}
I_{6}\leq
\frac{2\|\mathcal{\hat{Z}}_{l}-\mathcal{X}\|^2_{F}}{\sigma_{min}(\mathcal{X})}\leq
\frac{128\kappa^2\|\mathcal{Z}_{l}-\mathcal{X}\|^{2}_{F}}{\sigma_{min}(\mathcal{X})}\leq
\frac{1}{2}\|\mathcal{Z}_{l}-\mathcal{X}\|_{F}.
\end{align*}
Note that $\hat{\mathcal{Z}}_{l}$ is independent of
$\Omega_{l+1}$ with the incoherence parameter $\frac{100}{81}\mu_{0},$
then it follows Lemma \ref{lemm4} that
\begin{align*}
\Big\|\frac{n_{1}n_{2}n_{3}}{\hat{m}}P_{\mathcal{T}_{l}}R_{\Omega_{l+1}}(P_{\mathcal{U}}-P_{\mathcal{U}_{l}})
-P_{\mathcal{T}_{l}}(P_{\mathcal{U}}-P_{\mathcal{U}_{l}})\Big\|
\lesssim \sqrt{\frac{\beta \mu_{0} n_{(1)}n_{3} r \log
(n_{(1)}n_{3})}{\hat{m}}}
\end{align*}
with high probability.
Moreover, due to $\mathcal{X}=\mathcal{U}\ast\mathcal{U}^{T}\ast
\mathcal{X}$ and
$P_{\mathcal{\hat{T}}_{l}}(\mathcal{\hat{Z}}_{l})=\mathcal{\hat{Z}}_{l},$ we have
\begin{align*}
&(\mathcal{I}-P_{\mathcal{\hat{T}}_{l}})(\mathcal{\hat{Z}}_{l}-\mathcal{X})=-(\mathcal{I}-P_{\mathcal{\hat{T}}_{l}})(\mathcal{X})\\
=&~-\mathcal{U}\ast \mathcal{U}^{T}\ast
\mathcal{X}+\hat{\mathcal{U}}_{l}\ast \hat{\mathcal{U}}_{l}^{T}\ast
\mathcal{X}+\mathcal{U}\ast \mathcal{U}^{T}\ast
\mathcal{X}\ast\hat{\mathcal{V}}_{l}\ast
\hat{\mathcal{V}}_{l}^{T}-\hat{\mathcal{U}}_{l}\ast
\hat{\mathcal{U}}_{l}^{T}\ast
\mathcal{X}\ast\hat{\mathcal{V}}_{l}\ast \hat{\mathcal{V}}_{l}^{T}\\
=&~-(\mathcal{U}\ast \mathcal{U}^{T}-\hat{\mathcal{U}}_{l}\ast
\hat{\mathcal{U}}_{l}^{T})\ast
\mathcal{X}\ast(\mathcal{I}-\hat{\mathcal{V}}_{l}\ast
\hat{\mathcal{V}}_{l}^{T})\\
=&~(P_{\mathcal{U}}-P_{\hat{\mathcal{U}}_{l}})\ast(\mathcal{\hat{Z}}_{l}-\mathcal{X})\ast(\mathcal{I}-P_{\hat{\mathcal{V}}_{l}}).
\end{align*}
Together with
\begin{align*}
P_{\hat{\mathcal{T}}_{l}}((P_{\mathcal{U}}-P_{\hat{\mathcal{U}}_{l}})\ast(\mathcal{\hat{Z}}_{l}-\mathcal{X})\ast(\mathcal{I}-P_{\hat{\mathcal{V}}_{l}}))
=P_{\hat{\mathcal{T}}_{l}}((\mathcal{I}-P_{\mathcal{\hat{T}}_{l}})(\mathcal{\hat{Z}}_{l}-\mathcal{X}))=0,
\end{align*}
 we can bound $I_{7}$ as follows,
\begin{align*}
I_{7}
&=2\left\|\frac{n_{1}n_{2}n_{3}}{\hat{m}}P_{\mathcal{\hat{T}}_{l}}P_{\Omega_{l+1}}(\mathcal{I}-P_{\mathcal{\hat{T}}_{l}})(\mathcal{\hat{Z}}_{l}-\mathcal{X})\right\|_{F}\\
&=2\left\|\frac{n_{1}n_{2}n_{3}}{\hat{m}}P_{\mathcal{\hat{T}}_{l}}P_{\Omega_{l+1}}(\mathcal{I}-P_{\mathcal{\hat{T}}_{l}})(\mathcal{\hat{Z}}_{l}-\mathcal{X})-
P_{\hat{\mathcal{T}}_{l}}(\mathcal{I}-P_{\mathcal{\hat{T}}_{l}})(\mathcal{\hat{Z}}_{l}-\mathcal{X})\right\|_{F}\\
&\lesssim
\left\|\frac{n_{1}n_{2}n_{3}}{\hat{m}}(P_{\mathcal{\hat{T}}_{l}}P_{\Omega_{l+1}}-P_{\hat{\mathcal{T}}})(P_{\mathcal{U}}-P_{\hat{\mathcal{U}}_{l}})\right\|\|\mathcal{\hat{Z}}_{l}-\mathcal{X}\|_{F}\\
&\lesssim \sqrt{\frac{\beta \mu_{0}  n_{(1)}n_{3} r \log (n_{(1)}n_{3})}{\hat{m}}}\|\mathcal{\hat{Z}}_{l}-\mathcal{X}\|_{F}\\
&\lesssim \kappa\sqrt{\frac{\beta \mu_{0}  n_{(1)}n_{3} r \log (n_{(1)}n_{3})}{\hat{m}}}\|\mathcal{Z}_{l}-\mathcal{X}\|_{F}.\\
\end{align*}
Combining them together gives
\begin{align*}
\|\mathcal{Z}_{L+1}-\mathcal{X}\|_{F}
\leq\left(\frac{1}{2}+96\kappa\sqrt{\frac{\beta\mu_{0} n_{(1)}n_{3}r\log(n_{(1)}n_{3})}{\hat{m}}}\right)\|\mathcal{Z}_{l}-\mathcal{X}\|_{F}
\leq \frac{5}{6}\|\mathcal{Z}_{l}-\mathcal{X}\|_{F}
\end{align*}
holds with high probability
provided
\begin{equation} \label{equ3}
\hat{m}\gtrsim \beta\mu_{0}\kappa^{2}n_{(1)}n_{3}r\log(n_{(1)}n_{3}).
\end{equation}
 Noting that
$\mathcal{Z}_{0}=\ten{H}_{r}(\frac{\hat{m}}{n_{1}n_{2}n_{3}}P_{\Omega_{0}}(\mathcal{X}))$
and
\begin{align*}
\frac{\sigma_{min}(\mathcal{X})}{256\kappa^{2}}
=\frac{\|\mathcal{X}\|}{256\kappa^{3}},~~\|\mathcal{X}\|_{\infty}\leq
\mu_{1}\sqrt{\frac{ r}{n_1 n_2 n_3}}\|\mathcal{X}\|,
\end{align*}
by Lemma \ref{lemma2}, we can obtain
\begin{align*}
 \PP\Big[\|\rho^{-1}\PO(\mathcal{X}) - \mathcal{X}\|>
\frac{1}{256\mu_{1}\kappa^{3}}\sqrt{\frac{n_{1}n_{2}}{r^{2}}}\|\mathcal{X}\|_{\infty}\Big]
 &\leq  2n_{(1)}n_3 \exp\Bigg(\frac{\frac{-cn_{1}n_{2}}{\mu^2_{1}\kappa^{6}r^2}\|\mathcal{X}\|^2_{\infty}}{\frac{n_{(1)}\|\mathcal{X}\|^2_{\infty}}{\rho}}\Bigg)
  = 2(n_{(1)}n_3)^{1-c},
\end{align*}
with the proviso that
\begin{equation} \label{t1}
\hat{m}\gtrsim \beta\mu_{1}^2\kappa^6 r^2 n_{(1)}n_{3}\log(n_{(1)}n_{3}).
\end{equation}
It follows that
\begin{align*}
\|\mathcal{Z}_{0}-\mathcal{X}\|_{F}\leq&\sqrt{n_{3}r}\|\mathcal{Z}_{0}-\mathcal{X}\|
\leq \sqrt{n_{3}r}\frac{1}{256\mu_{1}\kappa^{3}}\sqrt{\frac{n_{1}n_{2}}{r^2}}\|\mathcal{X}\|_{\infty}
\leq \frac{\sigma_{min}(\mathcal{X})}{256\kappa^{2}}
\end{align*}
Therefore taking a maximum of the right hand sides of \eqref{equ3} and \eqref{t1} gives
\begin{align*}
\|\mathcal{Z}_{L}-\mathcal{X}\|_{F}\leq\left(\frac{5}{6}\right)^{L}
\frac{\sigma_{min}(\mathcal{X})}{256\kappa^{2}}
\end{align*}
with high probability provided
$\hat{m}\gtrsim
\max\{\mu_0 r\kappa^{2},\mu_{1}^{2}r^2\kappa^{6}\}\beta n_{(1)}n_{3}\log(n_{(1)}n_{3})$.
\end{proof}

In Algorithm 3, we use fixed stepsize $\frac{n_{1}n_{2}n_{3}}{\hat{m}}$ for ease of
exposition, which can be replaced by the adaptive stepsize. Lemma
\ref{lemm6} implies that if we take $L\geq 6\log\left(\frac{\beta n
\log n}{24 \varepsilon_{0}}\right)$,  the condition (\ref{e3})
in Theorem \ref{themm} can be satisfied with probability at least
$$1-2(n_{(1)}n_3)^{1-c}
-12\log\left(\frac{\beta n
\log n}{24 \varepsilon_{0}}\right)((n_{(1)}n_{3})^{1-\frac{5}{4}\beta}+(n_{(1)}n_{3})^{1-\frac{3}{2}\beta}).$$ Combining the above results, we can
obtain the followings.

\begin{theorem}\label{them3}
Suppose $\mathcal{X}_{l}\in \R^{n_{1}\times n_{2}\times n_{3}}$
with $\rank_{t}(\mathcal{X}_{l}) = \textbf{r} ~\text{and}~ \rank_{ct}(\mathcal{X}_{l}) = r,$ $\kappa$ is the condition number of $\mathcal{X}.$ Let $\Omega$ $(|\Omega|=m)$ is a set of indices sampled independently and uniformly
with replacement. Let $\mathcal{X}_{0}$ be the output of Algorithm 2. Then the iterates generated by Alg. \ref{alg2} is  guaranteed to converge to $\mathcal{X}$
with high probability
 provided
\begin{equation*}
m\gtrsim
\max\{\frac{\mu_{0} r\kappa^{2}}{\epsilon_{0}^2},\mu_{1}^2 r^2\kappa^{6}\}\beta n_{(1)}n_{3}\log(n_{(1)}n_{3})\log\Big(\frac{\beta
n \log n}{24\epsilon_{0}}\Big).
\end{equation*}
\end{theorem}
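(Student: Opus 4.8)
The plan is to obtain Theorem~\ref{them3} by handing the output $\mathcal{X}_0$ of the resampled Riemannian gradient descent with trimming (Algorithm~3) to the deterministic linear-convergence statement of Theorem~\ref{themm}: essentially all of the probabilistic work has already been done in Lemmas~\ref{le1}--\ref{lemm6}, so what remains is to verify the three hypotheses \eqref{e1}--\eqref{e3} of Theorem~\ref{themm} for the pair $(\mathcal{X},\Omega)$ with starting point $\mathcal{X}_0$, and then to bookkeep the sample budget and the failure probability. First I would fix the common group size $\hat m = m/(L+1)$ used in Algorithm~3 and choose the number of outer stages to be $L=\lceil 6\log(\beta n\log n/(24\epsilon_0))\rceil$, exactly as flagged in the remark preceding the theorem, so that $(5/6)^{L}\le 24\epsilon_0/(\beta n\log n)$.

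Second, I would verify \eqref{e1}--\eqref{e3}. Condition \eqref{e1}, $\|R_{\Omega}\|\le \tfrac{10}{3}\beta\log n$, is precisely Lemma~\ref{le2}, valid with probability at least $1-n^{3-3\beta}$ for the whole index multiset $\Omega$ of size $m$. Condition \eqref{e2}, $\|P_{\ten{T}}-p^{-1}P_{\ten{T}}R_{\Omega}P_{\ten{T}}\|\le\epsilon_0$ with $p=m/(n_1n_2n_3)$ and $\ten{T}$ the tangent space at the fixed target $\mathcal{X}$, is a direct application of Lemma~\ref{le1} (using that $\mathcal{X}$ satisfies the incoherence conditions), valid once $m\gtrsim \mu_0 r n_{(1)}n_3\beta\log(n_{(1)}n_3)/\epsilon_0^2$; this is the source of the $1/\epsilon_0^{2}$ in the claimed bound. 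Condition \eqref{e3} at $l=0$ is where the multi-stage initializer is needed: by Lemma~\ref{lemm6}, provided $\hat m\gtrsim\max\{\mu_0 r\kappa^2,\mu_1^2 r^2\kappa^6\}\beta n_{(1)}n_3\log(n_{(1)}n_3)$, the output obeys $\|\mathcal{X}_0-\mathcal{X}\|_F\le (5/6)^{L}\sigma_{min}(\mathcal{X})/(256\kappa^2)$; substituting the chosen $L$ makes the right-hand side at most $\tfrac{3p^{1/2}\epsilon_0}{20\beta(1+\epsilon_0)\log n}\,\sigma_{min}(\mathcal{X})$, which is exactly \eqref{e3}. Once \eqref{e1}--\eqref{e3} hold, the induction in the proof of Theorem~\ref{themm} runs to completion purely deterministically (it uses only Lemmas~\ref{lem5},~\ref{lem1},~\ref{lem2},~\ref{lemn1} and~\ref{wei4}), giving $v_{cg}<1$ and $\|\mathcal{X}_l-\mathcal{X}\|_F\le v_{cg}^{\,l}\|\mathcal{X}_0-\mathcal{X}\|_F$, so the iterates of Algorithm~2 converge to $\mathcal{X}$.

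Third, I would assemble the sample complexity and the failure probability. Since $m=(L+1)\hat m$ and $L\asymp\log(\beta n\log n/(24\epsilon_0))$, taking the larger of the lower bound on $\hat m$ from Lemma~\ref{lemm6} and the lower bound on $m$ from Lemma~\ref{le1} gives $m\gtrsim\max\{\mu_0 r\kappa^2/\epsilon_0^2,\ \mu_1^2 r^2\kappa^6\}\beta n_{(1)}n_3\log(n_{(1)}n_3)\log(\beta n\log n/(24\epsilon_0))$, the asserted bound; the $\kappa$-powers are inherited verbatim from Lemmas~\ref{lemm3}--\ref{lemm6} and no fresh estimate is required. For the probability, each of the $L+1$ stages of Algorithm~3 invokes Lemmas~\ref{le1},~\ref{lemm4} and~\ref{lemma2} (the last applied to an incoherent tensor thanks to Lemma~\ref{lemm3}), each failing with probability polynomially small in $n_{(1)}n_3$; a union bound over the stages, together with the single uses of Lemma~\ref{le2} for \eqref{e1} and Lemma~\ref{le1} for \eqref{e2}, yields the success probability displayed just before the theorem statement, namely at least $1-2(n_{(1)}n_3)^{1-c}-12\log(\beta n\log n/(24\epsilon_0))\big((n_{(1)}n_3)^{1-\frac54\beta}+(n_{(1)}n_3)^{1-\frac32\beta}\big)$, up to the $n^{3-3\beta}$ contribution of \eqref{e1}.

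I do not expect a deep obstacle here: the substantive arguments (resampling to break the dependence between an iterate and the next sampling block, and the trimming analysis) are already encapsulated in Lemma~\ref{lemm6}, while the deterministic convergence engine is Theorem~\ref{themm}. The one point requiring real care is the interface between the two. The geometric contraction rate $5/6$ of Lemma~\ref{lemm6} must be iterated enough times to push $\|\mathcal{X}_0-\mathcal{X}\|_F$ below the $\Theta(p^{1/2}\epsilon_0/(\beta\log n))\,\sigma_{min}(\mathcal{X})$ tolerance imposed by \eqref{e3}, which is precisely what forces $L$ to be logarithmic and hence produces the extra $\log(\beta n\log n/\epsilon_0)$ factor in $m$ relative to the hard-thresholding initialization of Theorem~\ref{them2}; one must also confirm that inflating the budget to $(L+1)\hat m$ does not disturb the $\kappa$- and $\mu$-dependence, and that the union bound over the logarithmically many stages costs only the displayed $\log(\cdot)$ prefactor in the failure probability.
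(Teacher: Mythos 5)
Your proposal is correct and follows essentially the same route as the paper, whose own proof is simply the one-line observation that the theorem follows from Lemma~\ref{lem1}, Theorem~\ref{themm}, and Lemma~\ref{lemm6}, together with the remark preceding the statement that fixes $L\gtrsim\log(\beta n\log n/(24\epsilon_0))$ so that Lemma~\ref{lemm6} delivers condition \eqref{e3}. You have merely filled in the bookkeeping (verification of \eqref{e1}--\eqref{e3}, the budget $m=(L+1)\hat m$, and the union bound over stages) that the paper leaves implicit.
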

\begin{proof}
This theorem follows from Lemma~\ref{lem1}, Theorem~\ref{themm}, and Lemma~\ref{lemm6}.
\end{proof}

\section{Numerical Results}

In this section, numerical results are presented to show the effectiveness of the proposed methods
(Algorithms 2 and 3)
for tensor completion. We also compare our methods with the t-svd method introduced in \cite{Zhang2017}.  All
the experiments are performed under Windows 7 and MATLAB R2018a
running on a desktop (Intel Core i7, @ 3.40GHz, 8.00G RAM)

The relative error (Res)  is defined by
\[\text{Res}=\frac{\|\mathcal{X}-\mathcal{X}_{0}\|_F}{\|\mathcal{X}_{0}\|_{F}},\]
where $\mathcal{X}$ is the recovered solution and $\mathcal{X}_{0}$ is the ground-truth tensor. Moreover, in order to evaluate the performance for real-world tensors, the peak signal-to-noise ratio (PSNR) is used to measure the equality of the estimated tensors, which is defined as follows:
\[\text{PSNR}=10\log_{10}\frac{n_1n_2n_3(\mathcal{X}_{\max}-\mathcal{X}_{\min})^2}{\|\mathcal{X}-\mathcal{X}_0\|_F},\]
where $\mathcal{X}_{\max}$ and $\mathcal{X}_{\min}$ are maximal and minimal entries of
$\mathcal{X}_{0}$, respectively.  The stopping criterion of the
algorithm is set to \[\frac{\|\mathcal{X}_{l+1}-\mathcal{X}_l\|_F}{\|\mathcal{X}_l\|_F}\leq 10^{-4}.\]

\subsection{Synthetic Data}

In this subsection, we first verify the correct recovery phenomenon of Algorithms 2 and 3 for synthetic datasets.
For simplicity, we consider the tensors of size $n\times n \times n$ with dimension $n\leq 100.$ We generate the clean tensor $\mathcal{X}_{0}=\mathcal{S}*\mathcal{W}$ with tubal rank ${\rm{rank}}_{ct}(\mathcal{X}_{0})=r$, where the entries of $\mathcal{S}\in \mathbb{R}^{n\times r\times n}$ and $\mathcal{W}\in \mathbb{R}^{r\times n\times n}$ are independently sampled from a standard Gaussian distribution $\mathcal{N}(0,1)$.
We check the recovery abilities of our Algorithms 2 and 3 as a function of tensor dimension $n,$ the tubal rank $r$, and the sampling size $m$.
We set $r$ to be different specified values, and vary $n$ and $m$ to empirically investigate the probability of recovery success. For each pair $n$ and $m$, we simulate 10 test instances and declare a trial to be successful if the recovered tensor $X_l$ satisfies $\frac{\|\mathcal{X}_{0}-\mathcal{X}_l\|_F}{\|\mathcal{X}_{0}\|_F}<10^{-3}$. Figure \ref{fig111} reports the fraction of perfect
recovery for each pair (black = 0\% and white = 100\%).

\begin{figure}[!h]
\centering
\subfigure[Alg. \ref{alg1}]{\includegraphics[width=3.0in, height=2.5in]{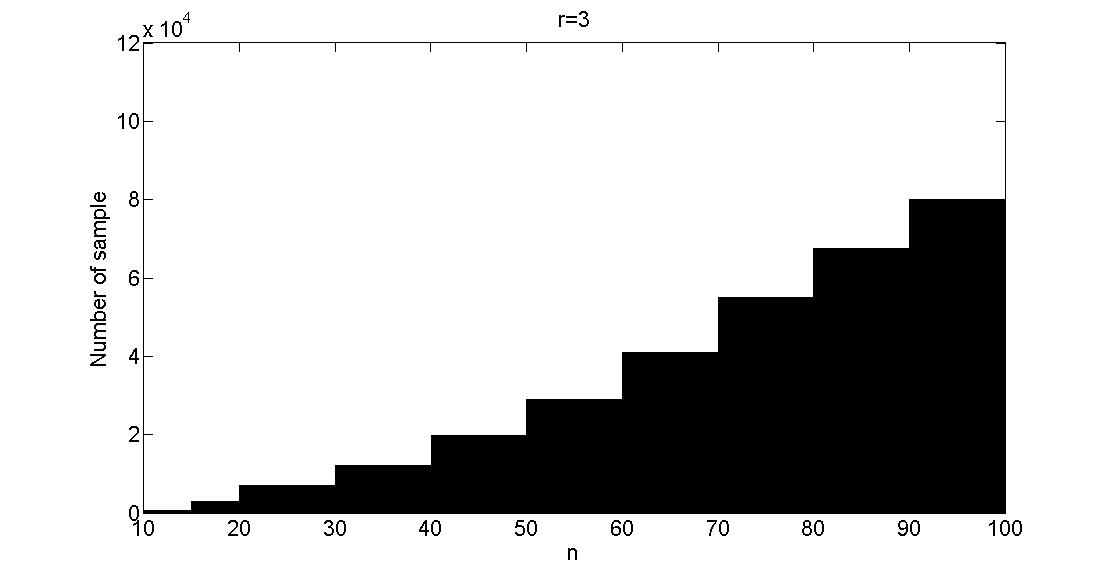}}
\subfigure[Alg. \ref{alg1}]{\includegraphics[width=3.0in, height=2.5in]{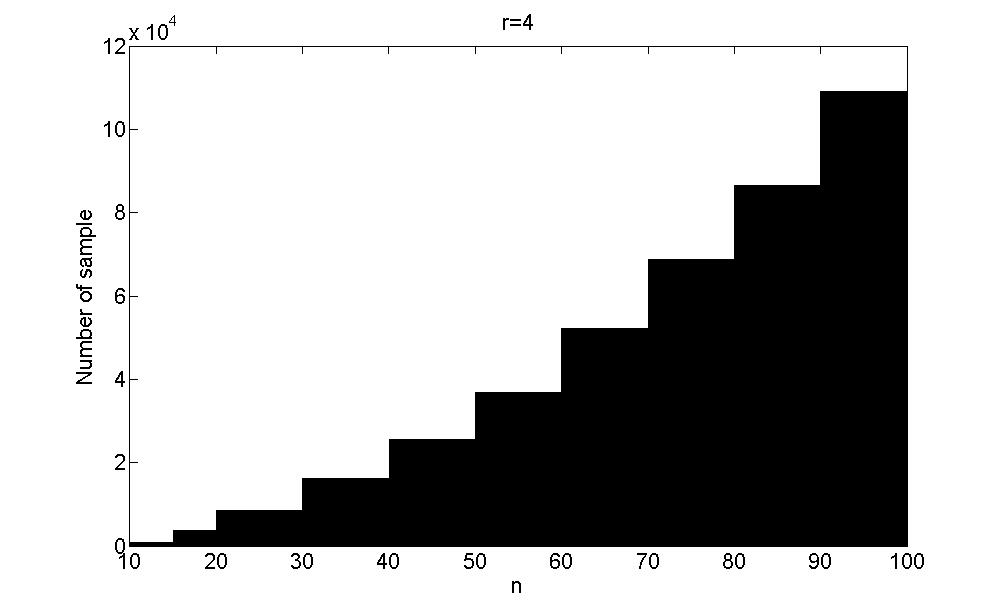}}\\
\subfigure[Alg. \ref{alg2}]{\includegraphics[width=3.0in, height=2.5in]{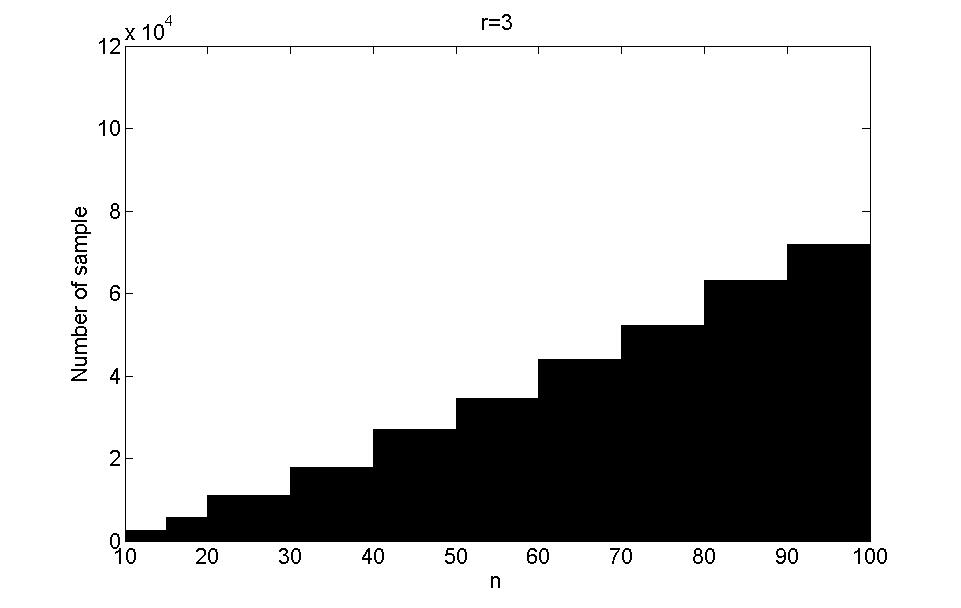}}
\subfigure[Alg. \ref{alg2}]{\includegraphics[width=3.0in, height=2.5in]{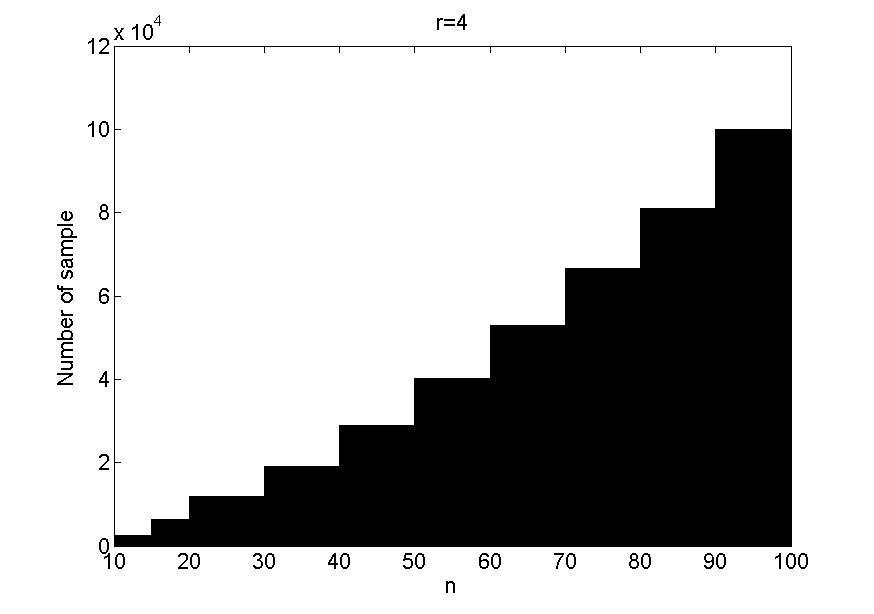}}
\caption{Recovery for different tensor sizes and sampling numbers.}\label{fig111}
\end{figure}

We see clearly that the recovery is correct under the sampling sizes given in Theorems \ref{them2} and \ref{them3} for all the cases. Moreover, compared with Algorithm 2, the sampling sizes needed of Algorithm 3 are improved after applying the trimming method.
In the following Tables and Figures, `Res', `Ite', `Time' and `Sr'  denotes the relative error, the iteration steps, the CPU time and the sampling ratio, respectively. To further corroborate our theoretical results, we check the effects of different values $r$ in the hard thresholding operator proposed in Algorithm 2 on `Res', `Ite' and `Time', respectively.  For a given tensor $\mathcal{X}$ with $n=50$ and $\rank_{ct}(\mathcal{X})=4$,
we test the effects under different sampling ratios, and show the results  in Figure \ref{fig1}.
It can be seen that when $\rank_{ct}(\mathcal{X})=4,$ the `Res', `Ite' and `Time' are better than the other settings under different sampling rates, respectively.
In addition, we compare our Algorithm 2 with one well known convex method, namely, t-svd method introduced in \cite{Zhang2017}.
Once again, we fix $n=50,$ and generate random tensor $\mathcal{X}$ as the prior experiments. For simplicity,
we test $\rank_{ct}(\mathcal{X})=2$ and $\rank_{ct}(\mathcal{X})=4$ under different sampling ratios. As shown in Table \ref{table1}, the `Res', `Ite' and `Time' of our Algorithm 2 are much better than the t-svd method.

\begin{figure}[h]
\centering
\subfigure[]{\includegraphics[width=2in, height=2.5in]{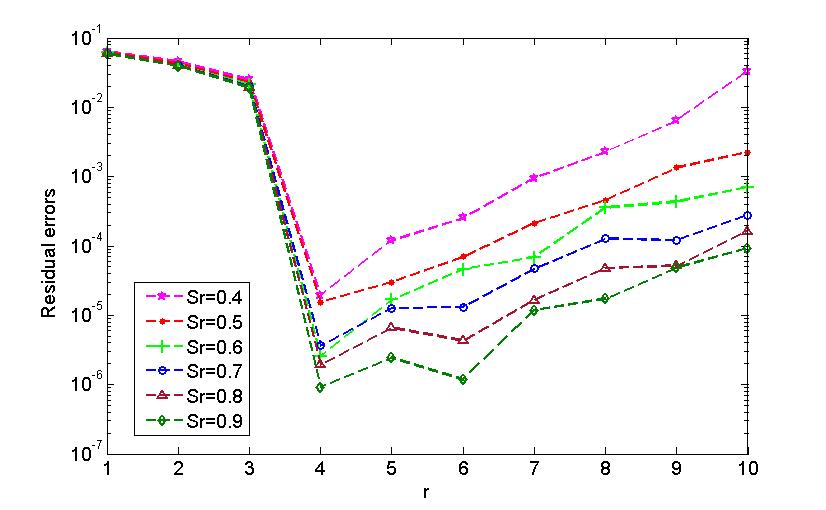}}
\subfigure[]{\includegraphics[width=2in, height=2.5in]{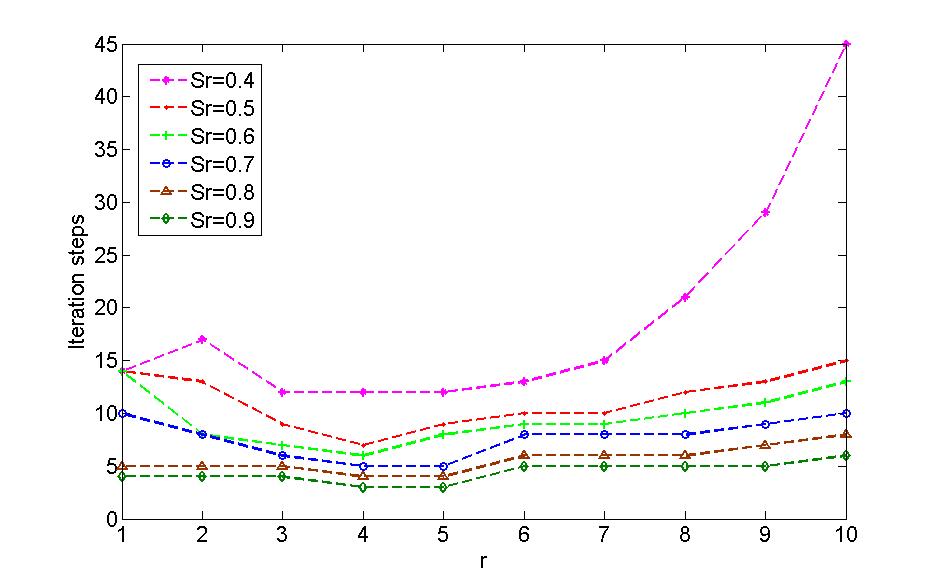}}
\subfigure[]{\includegraphics[width=2in, height=2.5in]{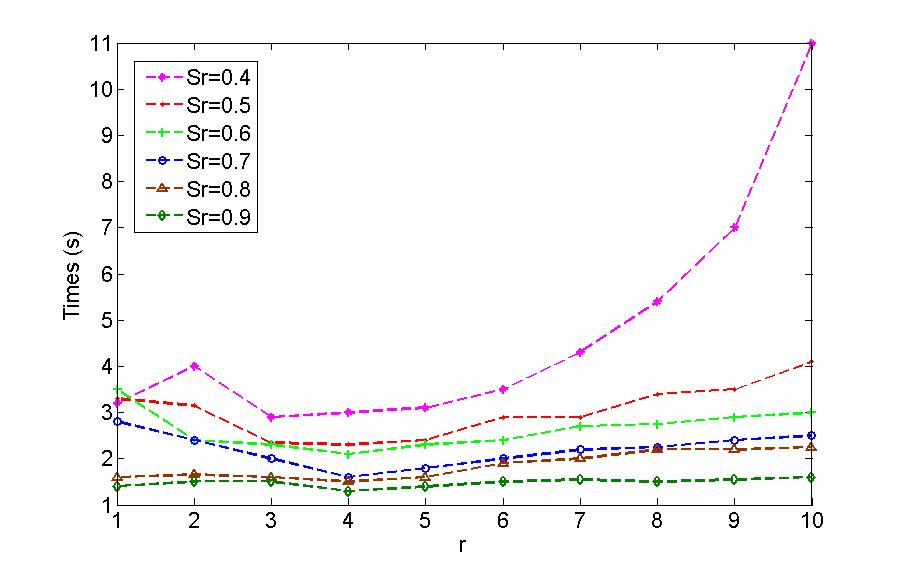}}\\
\caption{The comparison of the effects on `Res', `Ite' and `Time', relate to fixed $rank_{t}=4$.}\label{fig1}
\end{figure}

\begin{table}
  \caption{The comparison of Algorithm 2 and t-svd with different tubal ranks.} \label{table1}
  \begin{center}
{\begin{tabular}{cc|ccc|ccc}
  \hline
 && Alg. \ref{alg1}&&&&\;t-svd \cite{Zhang2017}\\\cline{3-8}
  $\rm{Tubal\;rank}$ &Sr&Time&Ite&Res&Time&Ite&Res\\\hline
  &0.4 &1.5701  &6& 9.5524e-6&7.1069&117&1.6783e-5 \\
  &0.5 &1.4471  &5& 8.4042e-6&4.8378&80&1.1604e-5\\
 2&0.6 &1.2834  &4& 4.7529e-7&3.5759&58&7.5460e-5 \\
  &0.7 & 1.3337 &4& 4.3528e-7&2.8086&43&6.1080e-5 \\
  &0.8 & 1.1767 &3&2.3870e-6 &2.2422&32&4.7257e-5 \\\hline
   &0.4 & 2.0595 & 8 & 3.4762e-5&13.8184&234&3.8036e-5 \\
   &0.5 & 1.6810 &6  &1.4283e-5 &8.5930&141 &2.3471e-5\\
 4 &0.6 & 1.5210 &5  & 2.6949e-6&5.7455&95 &4.3793e-5\\
   &0.7 &1.3695  & 4 &5.2723e-6 &4.0027&66 &4.5901-5\\
   &0.8 & 1.4065 & 4 &1.3991e-6 &2.8752&47 & 6.7191e-5\\\hline
\end{tabular}}
\end{center}
\end{table}

\subsection{Color Image Recovery}

\begin{figure}
\centering
\begin{tabular}{cccccc}
\includegraphics[width=0.8in]{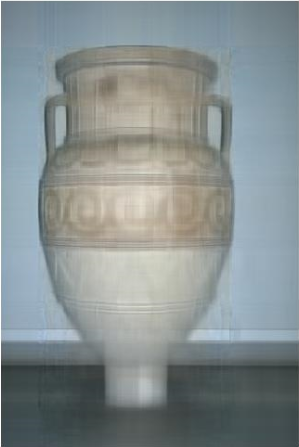}&\includegraphics[width=0.8in]{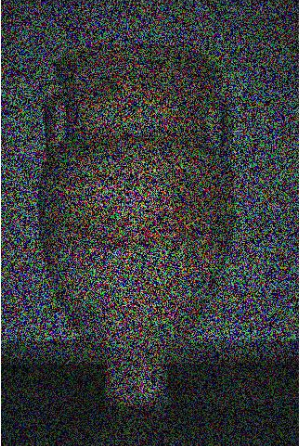} &
\includegraphics[width=0.8in]{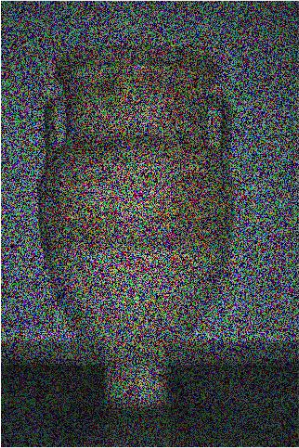} &
\includegraphics[width=0.8in]{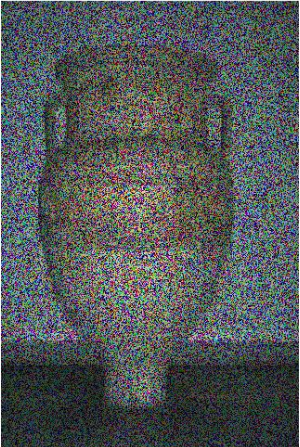}&
\includegraphics[width=0.8in]{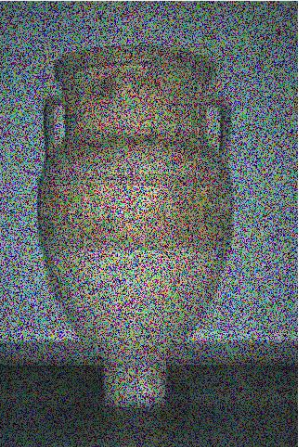}&
\includegraphics[width=0.8in]{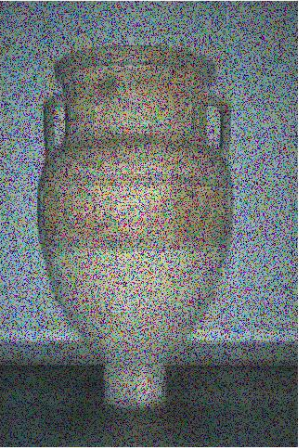}\\
&\includegraphics[width=0.8in]{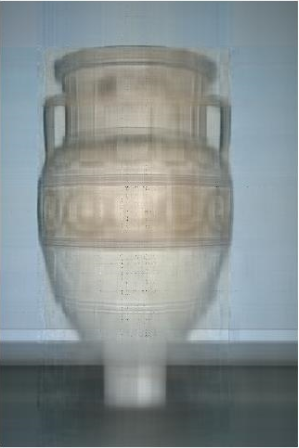} &
\includegraphics[width=0.8in]{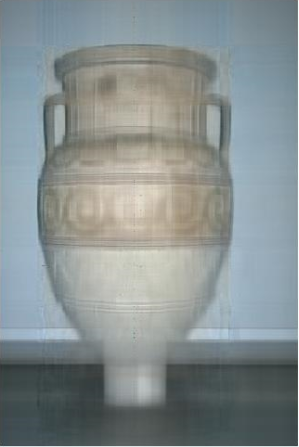}&
\includegraphics[width=0.8in]{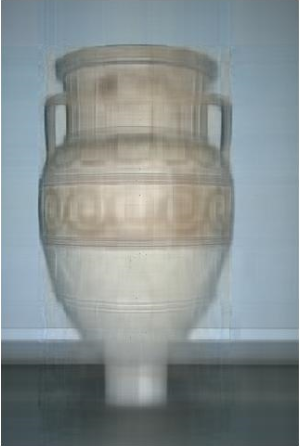}&
\includegraphics[width=0.8in]{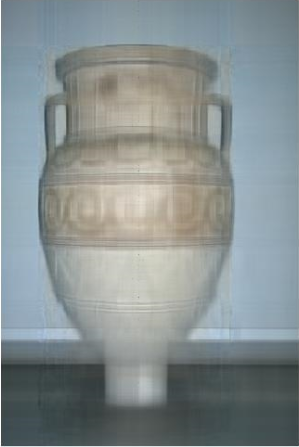}&
\includegraphics[width=0.8in]{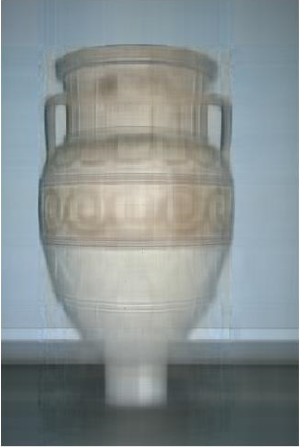}\\
&$Sr=0.4$ & $Sr=0.5$ & $Sr=0.6$ & $Sr=0.7$ & $Sr=0.8$
\end{tabular}
\caption{Recovery results under different sampling ratios by Algorithm 2 with $r=(29,5,1)$.}\label{fig3}
\end{figure}

It is well known that a $n_1\times n_2$ color image with red, blue and green channels can be naturally
regarded as a third-order tensor $\mathcal{A}\in \mathbb{R}^{n_1\times n_2\times 3}$. Each frontal slice of $\mathcal{A}$ corresponds to a channel
of the color image. Actually, each channel of a color image may not be low-rank, but their top
singular values dominate the main information \cite{liu2013,lu2016tensor}. Hence, the image can be reconstructed into a low-tubal-rank tensor by truncated t-svd.
We use the proposed Algorithm 2
to recover the sampled images under different sampling ratios and
test the restoration performances of the proposed Algorithm by computing the PSNR on the sampled images in
The original color image shown in the Figure \ref{fig3} is a $481\times 321\times 3$ tensor with transformed tubal multi-rank $\textbf{r}=(29,5,1)$.  The sampled images and recovered images  are listed in the first and second line of Figure \ref{fig3}, respectively.
Similar to synthetic data case, we compare our proposed  Algorithm 2
with the t-SVD method  in \cite{Zhang2017}.  Both `PSNR' and `Time' are listed for the comparison of different methods which are shown in Table \ref{table4}.  We find that the corresponding performance of PSNR  by the proposed Algorithm
1 is better than that by t-svd, and the running time by Algorithm 1
is less than that by t-svd when the sampling ratios is small ($sr\leq 0.6$).

\begin{table}[!h]
\centering
\caption{PSNR and Time by Algorithm 1 and t-svd \cite{Zhang2017} for a color image recovery with $\textbf{r}=(29,5,1)$.} \label{table4}
\begin{tabular}{c|c|ccccc}
  \hline
  &Sr  & 0.4 & 0.5 & 0.6 & 0.7 & 0.8 \\\hline
 Alg. 1 &PSNR & 41.42&	41.64&	41.69&	41.73&	41.76	\\
  &Time &12.62&   11.75&   10.95&   10.11&   9.51   \\\hline
t-SVD &PSNR &30.01&31.12&31.73&32.12&32.96\\
\cite{Zhang2017}&Time &13.64&13.20&11.28&9.86&8.88\\
  \hline
\end{tabular}
\end{table}

Moreover, we also test the effects of the values of $\textbf{r}$ in the hard thresholding operator
for tensor completion by the proposed
Algorithm 1. For the original color image with transformed tubal multi-rank $\textbf{r}=(29,5,1)$,
we test the effects on `PSNR' and `Time' of the values
$\textbf{r}+h\textbf{b}$ with $h=-2,-1,\ldots,5,~\text{and},~\textbf{b}=[1,1,1]^{T}$, by applying Algorithm 1
under different sampling ratios, respectively. The testing results are shown in Figure \ref{fig5}. It can be seen that, when $h=0$, the `PSNR' and `Time' are better than the other settings.

\begin{figure}[!h]
\centering
\subfigure[]{\includegraphics[width=2.5in, height=2in]{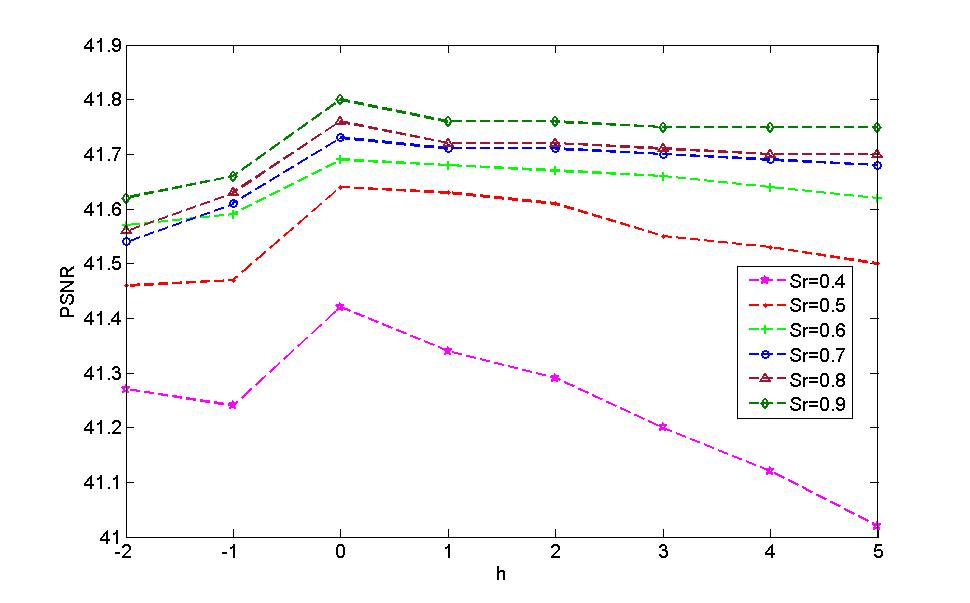}}
\subfigure[]{\includegraphics[width=2.5in, height=2in]{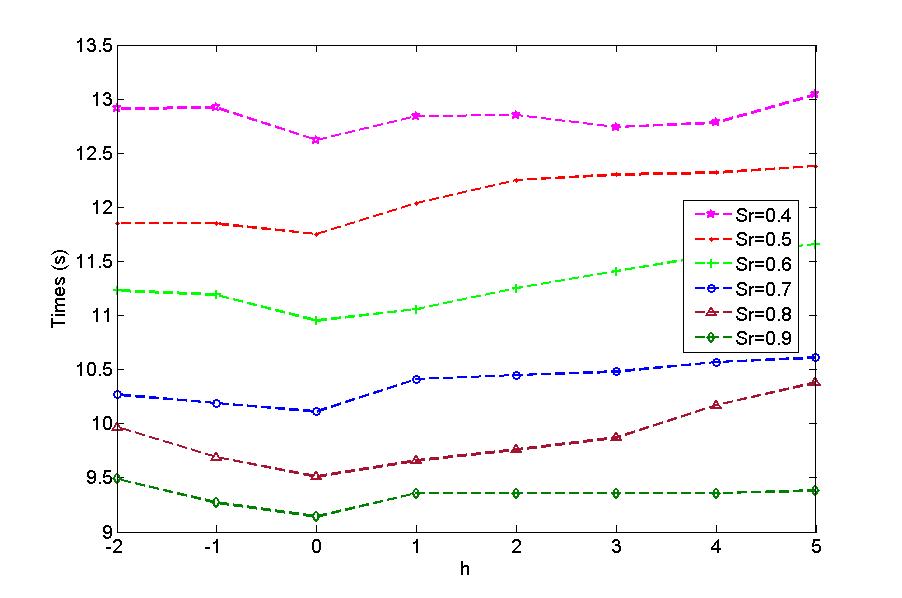}}\\
\caption{The effect of different setting of $\textbf{r}$ by  Algorithm 1
for given tensor with $\textbf{r}=(29,5,1)$.} \label{fig5}
\end{figure}

\subsection{Video Data}

We consider
the given video data Carphone $(144\times 176\times 180)$\footnote{\footnotesize{https://media.xiph.org/video/derf/}}
to test Algorithm 2.
We display the visual comparisons of the testing data in tensor completion with sampling ratios $0.4, 0.5,0.6,0.7,0.8,$ by Algorithm 1
and t-svd in Figure \ref{fig6}.  We can see that the images recovered by Algorithm 2
are much better than t-svd when the sampling ratios are small. The Algorithm 2
can keep more details than t-svd for the testing video. We also show the `PSNR' and `Time' by Algorithm 1 and t-svd for Carphone data with sampling ratios $0.4, 0.5,0.6,0.7,0.8$ in Table \ref{table5}. It can be seen that the PSNR values obtained by Algorithm 2
are higher than those by t-svd, and the CPU time of Algorithm 1 are less than those by t-svd.

\begin{table}[!h]
\centering
\caption{PSNR and Time by Algorithm 2 and t-svd  \cite{Zhang2017} for video data recovery.} \label{table5}
\begin{tabular}{c|c|ccccc}
  \hline
  &Sr  & 0.4 & 0.5 & 0.6 & 0.7 & 0.8 \\\hline
  Alg. \ref{alg1} &PSNR & 48.65&	48.94&	49.11&	49.23&	49.46\\
  &Time &126.88&   105.11&   91.84&   76.88&   70.65\\\hline
t-SVD &PSNR &46.89&47.82&48.24&48.64&48.69\\
\cite{Zhang2017}&Time &145.21&139.50&131.58&127.90&121.50\\
  \hline
\end{tabular}
\end{table}

\begin{figure}[!h]
\centering
\begin{tabular}{cccccc}
\includegraphics[width=0.8in]{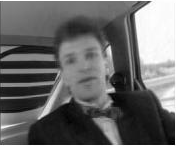}&
\includegraphics[width=0.8in]{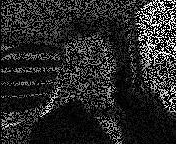} &
\includegraphics[width=0.8in]{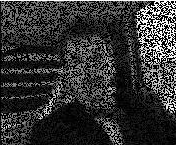} &
\includegraphics[width=0.8in]{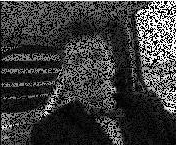}&
\includegraphics[width=0.8in]{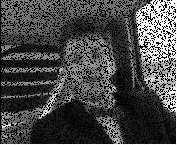}&
\includegraphics[width=0.8in]{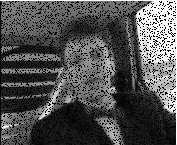}\\

&\includegraphics[width=0.8in]{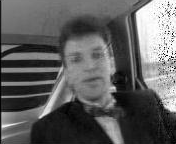} &
\includegraphics[width=0.8in]{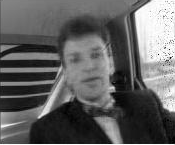}&
\includegraphics[width=0.8in]{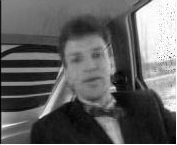}&
\includegraphics[width=0.8in]{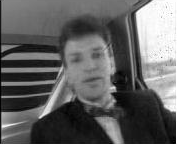}&
\includegraphics[width=0.8in]{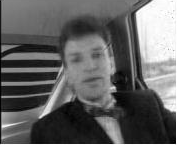}\\
&\includegraphics[width=0.8in]{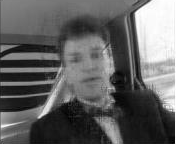} &
\includegraphics[width=0.8in]{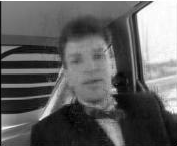}&
\includegraphics[width=0.8in]{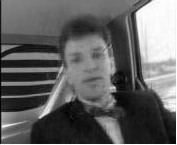}&
\includegraphics[width=0.8in]{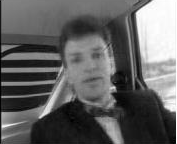}&
\includegraphics[width=0.8in]{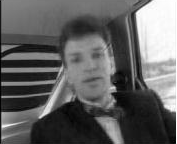}\\
&$Sr=0.4$ & $Sr=0.5$ & $Sr=0.6$ & $Sr=0.7$ & $Sr=0.8$
\end{tabular}
\caption{Recovered images (the tenth frame) by t-svd \cite{Zhang2017} and Algorithm 2
in tensor completion for video data with different sampling ratios.
The observed images, recovered images by t-svd \cite{Zhang2017} and Algorithm 2
are respectively listed from the first row, the second row, and the third row respectively.}\label{fig6}
\end{figure}

\section{Conclusion}

In this paper, we mainly consider the low rank tensor
completion problem by Riemannian optimization methods on the
manifold, where the tensor rank is relate to the transformed  tubal multi-rank which is
based on tensor singular value decomposition.   We discuss the
convergence properties of Riemannian conjugate gradient methods
under different initialized methods, respectively. The minimum
sampling sizes needed to recover the low transformed multi-rank tensor are also
derived. Numerical simulation shows that the algorithms are able to
recover the low transformed multi-rank tensor under different initialized
methods.

\section{Appendix} \label{seca}

\subsection{Proof of  Proposition \ref{prop1}, \ref{prop2} and \ref{prop3}}

In order to prove Proposition \ref{prop1}, we need  the following results which given in \cite{lee2013smooth}.
\begin{lemma}[Lemma 5.5 in \cite{lee2013smooth}]
Let $\mathcal{M}$ be a smooth manifold and $\mathcal{N}$ is a subset of $\mathcal{M}$.
Suppose every point $p\in \mathcal{N}$ has a neighborhood $\mathcal{U}\subset \mathcal{M}$  such that $\mathcal{U}\cap \mathcal{N} $is an embedded submanifold of $\mathcal{U}$. Then $\mathcal{N}$ is an embedded submanifold of $\mathcal{M}$.
\end{lemma}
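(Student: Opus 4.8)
The plan is to prove this local-to-global criterion by reducing the notion of ``embedded submanifold'' to its local slice-chart description and then transporting the slice charts supplied by the hypothesis from $\mathcal{U}$ up to the ambient $\mathcal{M}$. The foundational tool I would invoke is the standard slice-chart characterization: a subset $\mathcal{N}$ of a smooth $n$-manifold $\mathcal{M}$ is an embedded $k$-dimensional submanifold if and only if every point of $\mathcal{N}$ admits a \emph{slice chart}, namely a smooth chart $(\mathcal{V},\varphi)$ of $\mathcal{M}$ with $p\in\mathcal{V}$ such that $\varphi(\mathcal{V}\cap\mathcal{N})$ is a $k$-dimensional coordinate slice of $\varphi(\mathcal{V})$. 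Granting this equivalence, the whole statement reduces to exhibiting one slice chart of $\mathcal{M}$ at each point of $\mathcal{N}$.

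First I would fix an arbitrary $p\in\mathcal{N}$. By hypothesis there is an open set $\mathcal{U}\subseteq\mathcal{M}$ containing $p$ such that $\mathcal{U}\cap\mathcal{N}$ is an embedded submanifold of $\mathcal{U}$, where $\mathcal{U}$ is viewed as a smooth manifold in its own right, being an open submanifold of $\mathcal{M}$. Applying the slice-chart characterization \emph{inside} $\mathcal{U}$ at the point $p\in\mathcal{U}\cap\mathcal{N}$, I obtain a smooth chart $(\mathcal{V},\varphi)$ of $\mathcal{U}$, with $p\in\mathcal{V}$, such that $\varphi\big(\mathcal{V}\cap(\mathcal{U}\cap\mathcal{N})\big)$ is a coordinate slice of $\varphi(\mathcal{V})$.

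The crux is then to promote this chart of $\mathcal{U}$ to a slice chart of $\mathcal{M}$ adapted to $\mathcal{N}$, and two observations close the gap. Since $\mathcal{U}$ is open in $\mathcal{M}$, the set $\mathcal{V}$ is also open in $\mathcal{M}$ and $(\mathcal{V},\varphi)$ is a smooth chart of $\mathcal{M}$, because charts of an open submanifold are automatically compatible with the ambient atlas. Moreover, since $\mathcal{V}\subseteq\mathcal{U}$, one has $\mathcal{V}\cap\mathcal{N}=\mathcal{V}\cap(\mathcal{U}\cap\mathcal{N})$, so $\varphi(\mathcal{V}\cap\mathcal{N})$ is exactly the same coordinate slice produced above. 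Hence $(\mathcal{V},\varphi)$ is a slice chart of $\mathcal{M}$ for $\mathcal{N}$ at $p$. As $p\in\mathcal{N}$ was arbitrary, every point of $\mathcal{N}$ admits such a slice chart, and the slice-chart characterization applied now in $\mathcal{M}$ yields that $\mathcal{N}$ is an embedded submanifold of $\mathcal{M}$.

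I expect the only delicate points to be bookkeeping rather than substance: the two topological facts just used (that an open subset of $\mathcal{M}$ carries charts compatible with $\mathcal{M}$, and that shrinking the chart domain to $\mathcal{V}\subseteq\mathcal{U}$ does not alter the intersection with $\mathcal{N}$), together with the dimension count. The slice dimension at $p$ equals $\dim_p(\mathcal{U}\cap\mathcal{N})$, which is locally constant and hence constant on each connected component of $\mathcal{N}$; so on each component $\mathcal{N}$ inherits a well-defined dimension $k$. Everything else is a direct transfer of the local slice structure, so no estimate or explicit construction is needed beyond invoking the slice-chart theorem.
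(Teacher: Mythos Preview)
Your argument is correct and is exactly the standard slice-chart proof of this local-to-global criterion. Note, however, that the paper does not supply its own proof of this lemma: it is quoted verbatim as Lemma~5.5 from Lee's \emph{Introduction to Smooth Manifolds} and used only as a black box in establishing that the fixed transformed multi-rank tensor set is an embedded submanifold. So there is no ``paper's own proof'' to compare against; what you have written is essentially the argument Lee gives, and it is sound.
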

\begin{proof}[{\bf Proof of Proposition \ref{prop1}}]
Suppose that $\mathcal{E}_{0}\in \mathbb{R}^{n_{1}\times n_{2}\times n_{3}}$ with $\rank_{ct}(\mathcal{E}_{0})=r$ and $\rank_{t}(\mathcal{E})=\textbf{r}=(r_{1},...,r_{n_{3}}).$  By the element transformations it can be expressed as
\begin{align*}
\mathcal{E}_{0}=\left(
    \begin{array}{cc}
      \mathcal{ A}_{0} & \mathcal{ B}_{0} \\
      \mathcal{ C}_{0} & \mathcal{ D}_{0} \\
    \end{array}
  \right)
\end{align*}
where $ \mathcal{A}\in \mathbb{R}^{r\times r \times n_{3}}$ with $\rank_{t}(\mathcal{A})=\textbf{r},$ $ \mathcal{B}\in \mathbb{R}^{r\times (n_2-r) \times n_{3}}$, $ \mathcal{C}\in \mathbb{R}^{(n_1-r)\times r \times n_{3}}$, and $ \mathcal{D}\in \mathbb{R}^{(n_1-r)\times (n_2-r) \times n_{3}}$. Set $\phi : \mathbb{R}^{n_{1}\times n_{2}\times n_{3}}\rightarrow \mathbb{R}^{n_{1}n_{3}\times n_{2}n_{3}}$ as:
\begin{equation*}
\phi(\mathcal{A})  = \bdiag(\widehat{\mathcal{A}}) = \left[
\begin{array}{llll}
\widehat{\mathcal{A}}^{(1)} & & & \\
 & \widehat{\mathcal{A}}^{(2)} & & \\
 & & \ddots &\\
 & & & \widehat{\mathcal{A}}^{(n_3)}
\end{array}
\right],
\end{equation*} with  $\widehat{\mathcal{A}} = \dct(\mathcal{A}, [], 3).$
It is easy to see that $\phi$ is a bijective.
Let $\mathcal{U}$ be the open set
\begin{align*}
\mathcal{U}=\left\{\left(
    \begin{array}{cc}
      \mathcal{ A} & \mathcal{ B} \\
      \mathcal{ C} & \mathcal{ D} \\
    \end{array}
  \right)\in \mathbb{R}^{n_{1}\times n_{2}\times n_{3}}:~\rank_{t}(\mathcal{A})=\textbf{r}
\right\}
\end{align*}
which contains $\mathcal{E}_{0}$.  Define
$F: \mathcal{U}\rightarrow \mathbb{R}^{\sum_{i=1}^{n_3}(n_{1}-r_{i})\times \sum_{i=1}^{n_3}(n_{2}-r_{i})}$ as
\begin{align*}
 F\left(
    \begin{array}{cc}
      \mathcal{ A} & \mathcal{ B} \\
      \mathcal{ C} & \mathcal{ D} \\
    \end{array}
  \right)=(\widehat{\mathcal{D}}^{(1)}-\widehat{\mathcal{B}}^{(1)}(\widehat{\mathcal{A}}_{r_1}^{(1)})^{-1}\widehat{\mathcal{C}}^{(1)}) \oplus \cdots \oplus (\widehat{\mathcal{D}}^{(n_{3})}-\widehat{\mathcal{B}}^{(n_{3})}(\widehat{\mathcal{A}}_{r_{n_3}}^{(n_{3})})^{-1}\widehat{\mathcal{C}}^{(n_{3})}).
\end{align*}
Clearly, $F$ is smooth. In order to show it is a submersion, we need to show $F'(p)$ is surjective for each $p \in \mathcal{U}.$  Note that $\mathbb{R}^{\sum_{i=1}^{n_3}(n_{1}-r_{i})\times \sum_{i=1}^{n_3}(n_{2}-r_{i})}$ is a vector space, the tangent vector at $F(p)$ can be defined as $\sum_{i=1}^{n_3}(n_{1}-r_{i})\times \sum_{i=1}^{n_3}(n_{2}-r_{i})$ matrices. Given
$$\mathcal{E}=\idct\left(\mbox{\tt fold} \left(
                          \begin{array}{cccccc}
                            \hat{\mathcal{A}}_{r_{1}}^{(1)} & \hat{\mathcal{B}}^{(1)}&  &  &  &  \\
                              \hat{\mathcal{C}}^{(1)}& \hat{\mathcal{D}}^{(1)}  &  &  &  &  \\
                              &   & \ddots  &  &   &   \\
                            &   &  &   &  \hat{\mathcal{A}}_{r_{n_{3}}}^{(n_{3})} & \hat{\mathcal{B}}^{(n_{3})}  \\
                             &   &  &   &   \hat{\mathcal{C}}^{(n_{3})}&\hat{\mathcal{D}}^{(n_{3})}   \\
                          \end{array}
                        \right)\right)\in \ten{U},$$
and any tensor
$$\mathcal{X}=\idct\left(\mbox{\tt fold}\left(
                               \begin{array}{cccc}
                                 \hat{\mathcal{X}}^{(1)} &   &  &   \\
                                   &  \hat{\mathcal{X}}^{(2)} &   &   \\
                                  &   & \ddots &   \\
                                   &  &   & \hat{\mathcal{X}}^{(n_3)} \\
                               \end{array}
                             \right)\right)\in \mathbb{R}^{n_1 \times n_2 \times n_3}
$$
with $$\hat{\mathcal{X}}^{(i)}=\left(
                              \begin{array}{cc}
                                \textbf{0}_{r_{i}\times r_{i}} &\textbf{0}_{r_{i}\times (n_{2}-r_{i})} \\
                                \textbf{0}_{(n_1-r_{i})\times r_{i}} & X^{(i)}_{(n_1-r_{i})\times (n_{2}-r_{i}) } \\
                              \end{array}
                            \right),~i=1,...,n_3.
$$
 Define a curve $\tau :(-\xi,\xi)\rightarrow \mathcal{U}$ by
\begin{align*}
\tau(t)=\idct\left(\mbox{\tt fold} \left(
                          \begin{array}{cccccc}
                            \hat{\mathcal{A}}_{r_{1}}^{(1)} & \hat{\mathcal{B}}^{(1)}&  &  &  &  \\
                              \hat{\mathcal{C}}^{(1)}& \hat{\mathcal{Y}}^{(1)}  &  &  &  &  \\
                              &   & \ddots  &  &   &   \\
                            &   &  &   &  \hat{\mathcal{A}}_{r_{n_{3}}}^{(n_{3})} & \hat{\mathcal{B}}^{(n_{3})}  \\
                             &   &  &   &   \hat{\mathcal{C}}^{(n_{3})}&\hat{\mathcal{Y}}^{(n_{3})}   \\
                          \end{array}
                        \right)\right),
\end{align*}
where $\hat{\mathcal{Y}}^{(i)}=\hat{\mathcal{D}_i}+tX^{(i)}_{(n_1-r_{i})\times (n_{2}-r_{i}) } -\hat{\mathcal{C}}^{(i)}(\hat{\mathcal{A}}_{r_{i}}^{(i)})^{-1}\hat{\mathcal{B}}^{(i)},~i=1,...,n_3.$ We remark that $\hat{\mathcal{Y}}^{(i)},~i=1,...,n_3,$ does not need to possess the same sizes, respectively.
Then we can get
\begin{align*}
F_{*}\tau^{'}(0)=(F\circ\tau )^{'}(t)=&\left(
                                        \begin{array}{ccc}
                                          \frac{d}{dt}|_{t=0}(\hat{\mathcal{Y}}^{(1)}) &  &  \\
                                          & \ddots &  \\
                                           &  &  \frac{d}{dt}|_{t=0}(\hat{\mathcal{Y}}^{(n_3)}) \\
                                        \end{array}
                                      \right)\\
=&\left(
                                                 \begin{array}{ccc}
                                                   X^{(1)}_{(n_1-r_{1})\times (n_{2}-r_{1}) } &   &     \\
                                                        &  \ddots &   \\
                                                       &   &  X^{(n_3)}_{(n_1-r_{n_3})\times (n_{2}-r_{n_3}) } \\
                                                 \end{array}
                                               \right),
\end{align*} where $F_{*}$ is the push-forward projection relate $F.$
Then $F$ is a submersion and so  $\mathcal{M}_{\textbf{r}}\cap \mathcal{U}$  is an embedded submanifold of $\mathcal{U}$.
Next if $\mathcal{E}'$ is an arbitrary tensor of $\rank_{t}(\mathcal{E}')=\textbf{r}.$ Just note that it can be transformed to one in $\mathcal{U}$ by a rearrangement along the first and second directions. Such a rearrangement $R$ is a linear isomorphism that preserves the tensor transformed multi-rank, so $\mathcal{U}_0 = R^{-1}(\mathcal{U})$ is a neighborhood of $\mathcal{E}'$ and $F\circ R: \mathcal{U}_0\rightarrow \mathbb{R}^{n_{1}n_{3}\times n_{2}n_{3}}$ is a submersion whose zero level set is $\mathcal{M}_{\textbf{r}} \cap \mathcal{U}$. Thus every point in $\mathcal{M}_{\textbf{r}}$ has a neighborhood $\mathcal{U}$ such that $\mathcal{M}_{\textbf{r}}\cap \mathcal{U}_0$ is an embedded submanifold of $\mathcal{U}_0$, so $\mathcal{M}_{\textbf{r}}$ is an embedded submanifold. Moreover, note that $\rank (F_{*}\tau^{'}(0))=\sum_{i=1}^{n_{3}}((n_{1}+n_{2})r_{i}-r_{i}^2)$ which is saying that $\mathcal{M}_{\textbf{r}}$ possess dimension $\sum_{i=1}^{n_{3}}((n_{1}+n_{2})r_{i}-r_{i}^2).$
\end{proof}

\begin{proof}[{\bf Proof of Proposition \ref{prop2}}]
Suppose that $\ten{X}_l \in \ten{M}_{\textbf{r}},$ it follows from Definition \ref{def1.4} that $\ten{X}_{l}$ can be expressed as
$$\ten{X}_{l}=\ten{U}_l\ast \ten{S}\ast \ten{V}_l^{T},$$
where $\rank_{t}(\mathcal{U}_{l})=\rank_{t}(\mathcal{V}_{l})=\rank_{t}(\mathcal{S})=\textbf{r},~\mathcal{U}^T \ast \mathcal{U}=\mathcal{I}_{\textbf{r}}$,
 $\mathcal{V}_{l}^{T}\ast \mathcal{V}_{l}=\mathcal{I}_{\textbf{r}},$ and $\mathcal{S}$ is a diagonal tensor.
Then define a curve $\gamma: (-\zeta,\zeta)\rightarrow \mathcal{M}_{\textbf{r}}$ by setting $\gamma(t)=(\ten{U}_l+t\ten{X})\ast \ten{S}\ast ((\ten{V}_l+t\ten{Y}))^T$ where $\ten{X}$ and $\ten{Y}$ are arbitrary tensors with proper sizes. It is easy to check that $\gamma$ is smooth and $\gamma(0)=\ten{X}_l.$ Then, $\gamma'(0)=\ten{X}\ast\ten{S}\ast\ten{V}_{1}^T+\ten{U}_{l}\ast\ten{S}\ast\ten{Y}^T,$
and Proposition \ref{prop2} can be derived by setting $\ten{S}\ast\ten{Y}^T$ as $\ten{Z}^T_{1}$ and $\ten{X}\ast\ten{S}$ as $\ten{Z}_{2}.$

\end{proof}

A retraction is any smooth map from the tangent bundle  $T\ten{M}$ into $\ten{M}$ that approximates the exponential map to the first order. In order to prove the t$_c$-SVD truncation is a retraction we need introduce the following definition.
\begin{definition}[Definition 1 in \cite{absil2012}]\label{add1}
Let $\ten{M}$ be a smooth submanifold of $\mathbb{R}^{n_1\times \cdots\times n_{d}},$  $0_{x}$ denote the zero element of $T_{x}\ten{M}$. A mapping $R$ from the tangent bundle $T\ten{M}$ into $\ten{M}$ is said to
be a retraction on $\ten{M}$ around $x\in \ten{M}$ if there exists a neighborhood $U$ of $(x,0_{x})$ in $T\ten{M}$ such that the following properties hold:
\item (a) We have $U\subseteq dom(R)$ and the restriction $R: U \rightarrow \ten{M}$ is smooth.
\item (b) $R(y,0_y)$ for all $(y,0_y)\in U.$
\item (c) With the canonical identification $T_{0_x}T_x\ten{M}\simeq T_x\ten{M}$, $R$ satisfies the local rigidity condition:
$$DR(x,\cdot)(0_x)=id_{T_x\ten{M}}~\text{for all,}~(x,0_x)\in U, $$ where $id_{T_x\ten{M}}$ denotes the identity mapping on $T_{x}\ten{M}.$
\end{definition}
\begin{proof}[{\bf Proof of Proposition \ref{prop3}}]
 Let $H_{r_{i}}$ denote the SVD truncation of the $i$-th front slice of a tensor in the DCT transform domain. Then $P_{\ten{M}_{\textbf{r}}}(\ten{X}+\mathcal{\xi})$ given in \eqref{truncation} can be expressed as $\idct\circ(H_{r_{1}}\circ\cdots\circ H_{r_{n_3}})\circ \dct (\ten{X}+\mathcal{\xi}),$ where $H_{r_{i}}$ are independent with each other.
 Suppose that $\ten{N}_{i}$ denotes the open set of tensors that the $i$-th front slice if $\dct(\ten{N}_{i})$ has a nonzero gap between the $r_{i}$-th  and the $r_{i+1}$-th singular values. Then we can get the truncation operator $H_{r_{i}}$ is smooth and well-defined on $\ten{N}_{i}$.  Note that $\ten{X}\in \ten{M}_{\textbf{r}}$ means that $\ten{X}$ contains in any open set $\ten{N}_{i}$ and $\hat{\ten{X}}$ is a fixed point of every $H_{r_{i}}.$ Therefore,  it is possible to construct an open neighborhood $\ten{N}\in \mathbb{R}^{n_1\times n_2 \times n_2}$ of $\ten{X}$ such that $\idct\circ(H_{r_{1}}\circ\cdots\circ H_{r_{n_3}})\circ \dct (\ten{N}) \subseteq \ten{N}_{i}$ for all $i=1,...,n_3.$ Hence, the smoothness of $P_{\ten{M}_{\textbf{r}}}(\ten{X}+\mathcal{\xi})$ can be derived by the chain rule.  Hence, $P_{\ten{M}_{\textbf{r}}}$ in \eqref{truncation} defines a  locally smooth map in a neighborhood $\ten{N} \in T\ten{M}_{\textbf{r}}$ around $(\ten{X},0_{\ten{X}})$, i.e.,  Definition \ref{add1} (a) is satisfied. Definition \ref{add1} (b) can be derived by the fact that the application of the t$_{c}$-SVD truncation to elements in $\mathcal{M}_{\textbf{r}}$ leaves them unchanged. For Definition \ref{add1} (c).  Note that the tangent space $T_{\ten{X}}\ten{M}_{\textbf{r}}$ is a first order approximation of $\ten{M}_{\textbf{r}}$ around $\ten{X}$, we have $\|(\ten{X}+t\xi)-P_{\ten{M}_{\textbf{r}}}(\ten{X}+t\mathcal{\xi})\|_{F}=O(t)^{2}$ for $t\rightarrow 0.$ Hence, $P_{\ten{M}_{\textbf{r}}}(\ten{X}+\mathcal{\xi})=(\ten{X}+t\xi)+O(s)^{2}$, which gives $\frac{d}{dt}P_{\ten{M}_{\textbf{r}}}(\ten{X}+t\mathcal{\xi})|_{t=0}=\xi$. In other words,  $DP_{\ten{M}_{\textbf{r}}}(\ten{X},\cdot)(0_{\ten{X}})=id_{T_{\ten{X}}\ten{M}_{\textbf{r}}},$ which completes the proof.
 \end{proof}

\subsection{Proof of Lemma~\ref{lem5}}
\begin{proof}[Proof of Lemma~\ref{lem5}]
The proof of (i) can be proceeded as follows:
\begin{align*}\|\mathcal{U}_{l}\ast \mathcal{U}_{l}^{T}- \mathcal{U}\ast
\mathcal{U}^{T}\|&=\|\overline{\mathcal{U}_{l}\ast \mathcal{U}_{l}^{T}- \mathcal{U}\ast
\mathcal{U}^{T}}\|\\
&=\|\overline{\mathcal{U}}_l\overline{\mathcal{U}}_l^T-\overline{\mathcal{U}}\hspace{.1cm}\overline{\mathcal{U}}^T\|\\
&=\max_{i=1,\cdots,n_3}\|\widehat{\ten{U}}^{(i)}_l(\widehat{\ten{U}}^{(i)}_l)^T-\tenhat{U}^{(i)}(\tenhat{U}^{(i)})^T\|\\
&\leq \max_{i=1,\cdots,n_3} \frac{\|\tenhat{X}_l^{(i)}-\tenhat{X}^{(i)}\|_F}{\sigma_{\min}(\tenhat{X}^{(i)})}\\
&\leq \frac{\max_{i=1,\cdots,n_3}\|\tenhat{X}_l^{(i)}-\tenhat{X}^{(i)}\|_F}{\min_{i=1,\cdots,n_3}\sigma_{\min}(\tenhat{X}^{(i)})}\\
&\leq\frac{\|\ten{X}_l-\ten{X}\|_F}{\sigma_{\min}(\ten{X})},
\end{align*}
where the fourth line follows from Lemma 4.1 in \cite{wei2}.  We can similarly prove (ii).

Noting that $\dct(\cdot)$ is a unitary transform, we have
\begin{align*}
\|\mathcal{U}_{l}\ast \mathcal{U}_{l}^{T}- \mathcal{U}\ast
\mathcal{U}^{T}\|_{F}^2& =\|\overline{\mathcal{U}_{l}\ast \mathcal{U}_{l}^{T}- \mathcal{U}\ast
\mathcal{U}^{T}}\|_F^2\\
&=\|\overline{\mathcal{U}}_l\overline{\mathcal{U}}_l^T-\overline{\mathcal{U}}\hspace{.1cm}\overline{\mathcal{U}}^T\|_F^2\\
&=\sum_{i=1}^{n_3}\|\widehat{\ten{U}}^{(i)}_l(\widehat{\ten{U}}^{(i)}_l)^T-\tenhat{U}^{(i)}(\tenhat{U}^{(i)})^T\|_F^2\\
&\leq \sum_{i=1}^{n_3} \frac{2\|\tenhat{X}_l^{(i)}-\tenhat{X}^{(i)}\|_F^2}{\sigma^2_{\min}(\tenhat{X}^{(i)})}\\
&\leq \frac{2}{\sigma^2_{\min}(\ten{X})}\sum_{i=1}^{n_3}\|\tenhat{X}_l^{(i)}-\tenhat{X}^{(i)}\|_F^2\\
&=\frac{2\|\ten{X}_l-\ten{X}\|_F^2}{\sigma^2_{\min}(\ten{X})},
\end{align*}
where the fourth line follows from Lemma 4.1 in \cite{wei2}. Taking square roots on both sides yields (iii) and  (iv) can be proved similarly.

Since $\Pj_\ten{T}(\ten{X})=\ten{X}$, $(\I-\Pj_{\ten{T}_l})\ten{X}=\Pj_{\ten{T}}(\mathcal{X})-\Pj_{\ten{T}_{l}}(\mathcal{X})$. Thus,
\begin{align*}
\Pj_{\ten{T}}(\mathcal{X})-\Pj_{T_{l}}(\mathcal{X})=&~\mathcal{U}\ast
\mathcal{U}^{T}\ast \mathcal{X} +\mathcal{X}\ast \mathcal{V}\ast
\mathcal{V}^{T} - \mathcal{U}\ast \mathcal{U}^{T}\ast
\mathcal{X}\ast \mathcal{V}\ast \mathcal{V}^{T}\\
&-\mathcal{U}_{l}\ast \mathcal{U}_{l}^{T}\ast \mathcal{X}
-\mathcal{X}\ast \mathcal{V}_{l}\ast \mathcal{V}_{l}^{T} +
\mathcal{U}_{l}\ast \mathcal{U}_{l}^{T}\ast \mathcal{X}\ast
\mathcal{V}\ast\mathcal{V}^{T}\\
=&~(\mathcal{U}\ast \mathcal{U}^{T} -\mathcal{U}_{l}\ast
\mathcal{U}_{l}^{T})\ast\mathcal{X}+\mathcal{X}\ast(\mathcal{V}\ast
\mathcal{V}^{T}-\mathcal{V}_{l}\ast \mathcal{V}_{l}^{T})\\
&~- \mathcal{U}\ast\mathcal{U}^{T}\ast \mathcal{X}\ast
\mathcal{V}\ast \mathcal{V}^{T}+ \mathcal{U}\ast \mathcal{U}^{T}\ast
\mathcal{X}\ast \mathcal{V}_{l}\ast\mathcal{V}_{l}^{T}\\
&~- \mathcal{U}\ast \mathcal{U}^{T}\ast \mathcal{X}\ast
\mathcal{V}_{l}\ast \mathcal{V}_{l}^{T}+ \mathcal{U}_{l}\ast
\mathcal{U}_{l}^{T}\ast \mathcal{X}\ast \mathcal{V}_{l}\ast
\mathcal{V}_{l}^{T}\\
=&~(\mathcal{U}\ast \mathcal{U}^{T}-\mathcal{U}_{l}\ast
\mathcal{U}_{l}^{T})\ast(\mathcal{X}-\mathcal{X}_{l})\ast(\mathcal{I}_{\bm{r}}-\mathcal{V}_{l}\ast
\mathcal{V}_{l}^{T}).
\end{align*}
Therefore, (v) can be established as follows:
\begin{align*}
\|(\I-\Pj_{\ten{T}_l})\ten{X}\|_{F} =&\|(\mathcal{U}\ast
\mathcal{U}^{T}-\mathcal{U}_{l}\ast
\mathcal{U}_{l}^{T})\ast(\mathcal{X}-\mathcal{X}_{l})\ast(\mathcal{I}_{\bm{r}}-\mathcal{V}_{l}\ast
\mathcal{V}_{l}^{T})\|_{F}\\
\leq&\|(\mathcal{U}\ast \mathcal{U}^{T}-\mathcal{U}_{l}\ast
\mathcal{U}_{l}^{T})\|\|\mathcal{X}-\mathcal{X}_{l}\|_{F}\|\mathcal{I}_{\bm{r}}-\mathcal{V}_{l}\ast
\mathcal{V}_{l}^{T}\|\\
\leq&\frac{\|\mathcal{X}_{l}-\mathcal{X}\|\|\mathcal{X}-\mathcal{X}_{l}\|_{F}}{\sigma_{\min}(\mathcal{X})}\\
\leq&\frac{\|\mathcal{X}-\mathcal{X}_{l}\|_{F}^2}{\sigma_{\min}(\mathcal{X})},
\end{align*}
where the third line follows from (i).

For any $\ten{Z}$, we have
\begin{align*}
(\Pj_{\ten{T}_{l}}-\Pj_{\ten{T}})(\mathcal{Z})=~&\mathcal{U}_{l}\ast \mathcal{U}_{l}^{T}\ast \mathcal{Z}
+\mathcal{Z}\ast \mathcal{V}_{l}\ast \mathcal{V}_{l}^{T} -
\mathcal{U}_{l}\ast \mathcal{U}_{l}^{T}\ast \mathcal{Z}\ast
\mathcal{V}\ast\mathcal{V}^{T} -\mathcal{U}\ast \mathcal{U}^{T}\ast
\mathcal{Z} -\mathcal{Z}\ast \mathcal{V}\ast \mathcal{V}^{T} \\
&+ \mathcal{U}\ast \mathcal{U}^{T}\ast
\mathcal{Z}\ast \mathcal{V}\ast \mathcal{V}^{T}\\
=~&(\mathcal{U}_{l}\ast \mathcal{U}_{l}^{T}-\mathcal{U}\ast
\mathcal{U}^{T})\ast\ten{Z}\ast(\mathcal{I}_{\bm{r}}-\mathcal{V}\ast
\mathcal{V}^{T})+(\mathcal{I}_{\bm{r}}-\mathcal{U}_{l}\ast
\mathcal{U}_{l}^{T})\ast\mathcal{Z}\ast(\mathcal{V}_{l}\ast
\mathcal{V}_{l}^{T}-\mathcal{V}\ast \mathcal{V}^{T}).
\end{align*}
Consequently, taking the Frobenius norm on both sides of the above equality and utilizing (i) and (ii) yields (vi).
\end{proof}

\subsection{Proof of Lemma \ref{lem1}}
\begin{proof}[Proof of Lemma \ref{lem1}]
For any tensor $\mathcal{Z}$, we have
\begin{align*}
\|\RR_{\Omega}\Pj_{\ten{T}}(\mathcal{Z})\|_{F}^2&=\langle
\RR_{\Omega}\Pj_{\ten{T}}(\mathcal{Z}),\RR_{\Omega}\Pj_{\ten{T}}(\mathcal{Z})\rangle
\leq \frac{10}{3}\beta \log(n)\langle
\Pj_{\ten{T}}(\mathcal{Z}),\RR_{\Omega}\Pj_{\ten{T}}(\mathcal{Z})\rangle\\
&=\frac{10}{3}\beta \log(n)\langle
\Pj_{T}(\mathcal{Z}),\Pj_{\ten{T}}\RR_{\Omega}\Pj_{\ten{T}}(\mathcal{Z})\rangle\\
&\leq \frac{10}{3}\beta
\log(n)(1+\epsilon_{0})p\|\Pj_{\ten{T}}(\mathcal{Z})\|_{F}^{2}.
\end{align*}
It follows that $\|\RR_{\Omega}\Pj_{\ten{T}}\|\leq \sqrt{\frac{10}{3}\beta
\log(n)(1+\epsilon_{0})p}$ and
\begin{align*}
\|\RR_{\Omega}\Pj_{\ten{T}_{l}}\|\leq
\|\RR_{\Omega}(\Pj_{\ten{T}_{l}}-\Pj_{\ten{T}})\|+\|\RR_{\Omega}\Pj_{\ten{T}}\| \leq
\frac{10}{3}\beta \log(n)(1+\epsilon_{0})p^{\frac{1}{2}}.
\end{align*}
Moreover, the application of the triangle inequality yields that
\begin{align*}
&\|\Pj_{\ten{T}_{l}}-p^{-1}\Pj_{\ten{T}_{l}}\RR_{\Omega}\Pj_{\ten{T}_{l}}\|\\
=&\|\Pj_{\ten{T}_{l}}-\Pj_{\ten{T}}+\Pj_{\ten{T}}-p^{-1}\Pj_{\ten{T}}\RR_{\Omega}\Pj_{\ten{T}}+p^{-1}\Pj_{T}\Pj_{\Omega}\Pj_{\ten{T}}
-p^{-1}\Pj_{\ten{T}_{l}}\RR_{\Omega}\Pj_{\ten{T}}+p^{-1}\Pj_{\ten{T}_{l}}\RR_{\Omega}\Pj_{\ten{T}}-
p^{-1}\Pj_{\ten{T}_{l}}\RR_{\Omega}\Pj_{\ten{T}_{l}}\|\\
\leq &
\|\Pj_{\ten{T}_{l}}-\Pj_{\ten{T}}\|+p^{-1}\|\Pj_{\ten{T}}\RR_{\Omega}\Pj_{\ten{T}}-\Pj_{\ten{T}_{l}}\RR_{\Omega}\Pj_{\ten{T}}\|
+p^{-1}\|\Pj_{\ten{T}_{l}}\RR_{\Omega}\Pj_{\ten{T}_{l}}-\Pj_{\ten{T}_{l}}\RR_{\Omega}\Pj_{\ten{T}}\|+\|\Pj_{\ten{T}}-p^{-1}\Pj_{T}\RR_{\Omega}\Pj_{\ten{T}}\|\\
\leq & 4\epsilon_{0},
\end{align*}
which completes the proof. 
\end{proof}

\subsection{ Proof of Lemmas \ref{lem2} and \ref{lemn1} }
\begin{proof}[Proof of Lemma \ref{lem2}]
When $\beta_{l}=0$ one has $P_{l}=\Pj_{\ten{T}_{l}}(\mathcal{G}_{l})$. Thus
$\alpha_l$ can be expressed as
$$\alpha_{l}=\frac{\|\Pj_{\ten{T}_{l}}(\mathcal{G}_{l})\|_{F}^{2}}{\langle
\Pj_{\ten{T}_{l}}(\mathcal{G}_{l}),\RR_{\Omega}\Pj_{\ten{T}_{l}}(\mathcal{G}_{l})\rangle }.$$ Note that if
$\|\Pj_{\ten{T}_{l}}-p^{-1}\Pj_{\ten{T}_{l}}\RR_{\Omega}\Pj_{\ten{T}_{l}}\|\leq 4\epsilon_{0}$
is satisfied, then
\begin{equation*}
\|\Pj_{\ten{T}_{l}}\RR_{\Omega}\Pj_{\ten{T}_{l}}\|\leq
p\|\Pj_{\ten{T}_{l}}-p^{-1}\Pj_{\ten{T}_{l}}\RR_{\Omega}\Pj_{\ten{T}_{l}}\|+p\|\Pj_{\ten{T}_{l}}\|\leq
(1+4\epsilon_{0})p.
\end{equation*}
Consequently,
\begin{align*}
\langle \Pj_{\ten{T}_{l}}(\mathcal{G}_{l}),\RR_{\Omega}\Pj_{\ten{T}_{l}}(\mathcal{G}_{l})\rangle=\langle
\Pj_{T_{l}}(\mathcal{G}_{l}),\Pj_{\ten{T}_{l}}\RR_{\Omega}\Pj_{\ten{T}_{l}}(\mathcal{G}_{l})
\rangle\leq(1+4\epsilon_{0})p\|\Pj_{\ten{T}_{l}}(\mathcal{G}_{l})\|_{F}^2.
\end{align*}
On the other hand,
\begin{align*}
\|\Pj_{\ten{T}_{l}}(\mathcal{G}_{l})\|_{F}^2&=\langle
\Pj_{\ten{T}_{l}}(\mathcal{G}_{l}),(\Pj_{\ten{T}_{l}}-p^{-1}\Pj_{\ten{T}_{l}}\RR_{\Omega}\Pj_{\ten{T}_{l}})(\mathcal{G}_{l})\rangle+\langle
\Pj_{\ten{T}_{l}}(\mathcal{G}_{l}),p^{-1}\Pj_{\ten{T}_{l}}\RR_{\Omega}\Pj_{\ten{T}_{l}}(\mathcal{G}_{l})\rangle\\
&\leq 4\epsilon_{0}\|\Pj_{\ten{T}_{l}}(\mathcal{G}_{l})\|_{F}^2+p^{-1}\langle
\Pj_{\ten{T}_{l}},\RR_{\Omega}\Pj_{\ten{T}_{l}}(\mathcal{G}_{l})\rangle.
\end{align*}
Combining the above two inequalities together yields that
$$\frac{1}{(1+4\epsilon_{0})p}\leq \alpha_{l}\leq
\frac{1}{(1-4\epsilon_{0})p}.$$
It follows that
\begin{align*}
\|\Pj_{\ten{T}_{l}}-\alpha_{l}\Pj_{\ten{T}_{l}}\RR_{\Omega}\Pj_{\ten{T}_{l}}\|
&\leq \|\Pj_{\ten{T}_{l}}-p^{-1}\Pj_{\ten{T}_{l}}\RR_{\Omega}\Pj_{\ten{T}_{l}}\|+(\alpha_{l}-p^{-1})\|\Pj_{\ten{T}_{l}}\RR_{\Omega}\Pj_{\ten{T}_{l}}\|\\
& \leq
4\epsilon_{0}+\frac{4\epsilon_{0}(1+4\epsilon_{0})p}{(1-4\epsilon_{0})p}=\frac{8\epsilon_{0}}{1-4\epsilon_{0}}.
\end{align*}
 This completes the proof.
\end{proof}

\begin{proof}[Proof of Lemma \ref{lemn1}]
When $\beta_{l}\neq0,$ one has
$P_{l}=\Pj_{\ten{T}_{l}}(\mathcal{G}_{l})+\beta_{l}\Pj_{\ten{T}_{l}}(\mathcal{Q}_{l-1}).$ Then the  orthogonalization weight $\beta_{l}$ can be
bounded as follows
\begin{align*}
 |\beta_{l}|= &~ \Big|\frac{\langle \Pj_{\ten{T}_{l}}(\mathcal{G}_{l}),
\RR_{\Omega}\Pj_{\ten{T}_{l}}(\mathcal{Q}_{l-1})\rangle}{\langle
\Pj_{\ten{T}_{l}}(\mathcal{Q}_{l-1}),\RR_{\Omega}\Pj_{\ten{T}_{l}}(\mathcal{Q}_{l-1})\rangle}\Big|
\\
 \leq &~ \Big|\frac{\langle \Pj_{\ten{T}_{l}}(\mathcal{G}_{l}),
(\Pj_{\ten{T}_{l}}\RR_{\Omega}\Pj_{\ten{T}_{l}}-p\Pj_{\ten{T}_{l}})(\mathcal{Q}_{l-1})\rangle}{\langle
\Pj_{\ten{T}_{l}}(\mathcal{Q}_{l-1}),\RR_{\Omega}\Pj_{\ten{T}_{l}}(\mathcal{Q}_{l-1})\rangle}\Big|+\Big|\frac{\langle
p\Pj_{\ten{T}_{l}}(\mathcal{G}_{l}), \RR_{\ten{T}_{l}}(\mathcal{Q}_{l-1})\rangle}{\langle
\Pj_{\ten{T}_{l}}(\mathcal{Q}_{l-1}),\RR_{\Omega}\Pj_{\ten{T}_{l}}(\mathcal{Q}_{l-1})\rangle}\Big|\\
\leq &~
\frac{4\epsilon_{0}p}{(1-4\epsilon_{0})p}\frac{\|\Pj_{\ten{T}_{l}}(\mathcal{G}_{l})\|_{F}}{\|\Pj_{\ten{T}_{l}}(\mathcal{Q}_{l-1})\|_{F}}+
\frac{p}{(1-4\epsilon_{0})p}\frac{|\langle
p\Pj_{\ten{T}_{l}}(\mathcal{G}_{l}),
\Pj_{\ten{T}_{l}}(\mathcal{Q}_{l-1})\rangle|}{\|\Pj_{\ten{T}_{l}}(\mathcal{Q}_{l-1})\|_{F}}\\
\leq& ~ \frac{4k_{2}\epsilon_{0}}{(1-4\epsilon_{0})}+
\frac{k_{1}k_{2}}{(1-4\epsilon_{0})}.
\end{align*}
In order to bound $\alpha_{l},$ we need to bound
$\|\Pj_{\ten{T}_{l}}(\mathcal{G}_{l})\|_{F}$ in terms of
$\|\Pj_{\ten{T}_{l}}(\mathcal{Q}_{l-1})\|_{F}.$ First note that
\begin{align*}
|\beta_{l}\langle
\Pj_{\ten{T}_{l}}(\mathcal{G}_{l}),\Pj_{\ten{T}_{l}}(\mathcal{Q}_{l-1}) \rangle|
&=\Big|\frac{\langle \Pj_{\ten{T}_{l}}(\mathcal{G}_{l}),
P_{\Omega}\Pj_{\ten{T}_{l}}(P_{l-1})\rangle}{\langle
\Pj_{\ten{T}_{l}}(\mathcal{Q}_{l-1}),P_{\Omega}\Pj_{\ten{T}_{l}(\mathcal{Q}_{l-1})}\rangle}\langle
\Pj_{\ten{T}_{l}}(\mathcal{G}_{l}),\Pj_{\ten{T}_{l}}(\mathcal{Q}_{l-1}) \rangle\Big|\\
&\leq\frac{(1+4\epsilon_{0})p\|\Pj_{\ten{T}_{l}}(\mathcal{G}_{l})\|_{F}}{(1-4\epsilon_{0})p\|\Pj_{\ten{T}_{l}}(\mathcal{Q}_{l-1})\|_{F}}\Big|\langle
\Pj_{\ten{T}_{l}}(\mathcal{G}_{l}),\Pj_{\ten{T}_{l}}(\mathcal{Q}_{l-1}) \rangle\Big|\\
&\leq\frac{1+4\epsilon_{0}}{1-4\epsilon_{0}}\|\Pj_{T_{l}}(\mathcal{G}_{l})\|^2_{F}\frac{|\langle
\Pj_{\ten{T}_{l}}(\mathcal{G}_{l}),\Pj_{\ten{T}_{l}}(\mathcal{Q}_{l-1})
\rangle|}{\|\Pj_{\ten{T}_{l}}(\mathcal{G}_{l})\|_{F}\|\Pj_{\ten{T}_{l}}(\mathcal{Q}_{l-1})\|_{F}}\\
&\leq
\frac{k_{1}(1+4\epsilon_{0})}{1-4\epsilon_{0}}\|\Pj_{\ten{T}_{l}}(\mathcal{G}_{l})\|^{2}_{F}.
\end{align*}
Thus there holds
\begin{align*}
|\langle
\Pj_{\ten{T}_{l}}(\mathcal{Q}_{l}),\Pj_{\ten{T}_{l}}(\mathcal{G}_{l})\rangle|&=|\langle
\Pj_{\ten{T}_{l}}(\mathcal{G}_{l})+\beta_{l}\Pj_{\ten{T}_{l}}(\mathcal{Q}_{l-1}),\Pj_{\ten{T}_{l}}(\mathcal{G}_{l})\rangle|\\
&\geq
\|\Pj_{\ten{T}_{l}}(\mathcal{G}_{l})\|_{F}^{2}-|\langle\beta_{l}\Pj_{\ten{T}_{l}}(\mathcal{Q}_{l-1}),\Pj_{\ten{T}_{l}}(\mathcal{G}_{l})\rangle|
\geq
\Big(1-\frac{k_{1}(1+4\epsilon_{0})}{1-4\epsilon_{0}}\Big)\|\Pj_{\ten{T}_{l}}(\mathcal{G}_{l})\|^{2}_{F}.
\end{align*}
Moreover, by the Cauchy-Schwarz inequality we have
\begin{align*}
\Big(1-\frac{k_{1}(1+4\epsilon_{0})}{1-4\epsilon_{0}}\Big)\|\Pj_{\ten{T}_{l}}(\mathcal{G}_{l})\|^{2}_{F}\leq
|\langle \Pj_{\ten{T}_{l}}(\mathcal{Q}_{l}),\Pj_{\ten{T}_{l}}(\mathcal{G}_{l})\rangle| \leq
\|\Pj_{\ten{T}_{l}}(\mathcal{Q}_{l})\|_{F}\|\Pj_{\ten{T}_{l}}(\mathcal{G}_{l})\|_{F}.
\end{align*}
Therefore,
\begin{align*}
\|\Pj_{\ten{T}_{l}}(\mathcal{G}_{l})\|_{F} \leq
\frac{1}{1-\frac{k_{1}(1+4\epsilon_{0})}{1-4\epsilon_{0}}}\|\Pj_{\ten{T}_{l}}(\mathcal{G}_{l})\|_{F}
\end{align*}
Noting that
\begin{align*}
\alpha_{l} =\frac{\langle
\Pj_{\ten{T}_{l}}(\mathcal{G}_{l}),\Pj_{\ten{T}_{l}}(\mathcal{Q}_{l})\rangle}{\langle
\Pj_{\ten{T}_{l}}(\mathcal{Q}_{l}),P_{\Omega}\Pj_{\ten{T}_{l}}(\mathcal{Q}_{l})\rangle}
=p^{-1}+\frac{\langle \Pj_{\ten{T}_{l}}(\mathcal{G}_{l}),
(\Pj_{\ten{T}_{l}}-p^{-1}\Pj_{\ten{T}_{l}}P_{\Omega}\Pj_{\ten{T}_{l}})(\mathcal{Q}_{l})\rangle}{\langle
\Pj_{\ten{T}_{l}}(\mathcal{Q}_{l}),P_{\Omega}\Pj_{\ten{T}_{l}}(\mathcal{Q}_{l})\rangle},
\end{align*}
we have
\begin{align*}
|\alpha_{l}\cdot p-1|& \leq p\Big|\frac{\langle
\Pj_{\ten{T}_{l}}(\mathcal{G}_{l}),(\Pj_{\ten{T}_{l}}-p^{-1}\Pj_{\ten{T}_{1}}P_{\Omega}\Pj_{\ten{T}_{l}})(\mathcal{Q}_{l})\rangle}{\langle
\Pj_{\ten{T}_{l}}(\mathcal{Q}_{l}),\Pj_{\ten{T}_{l}}P_{\Omega}\Pj_{\ten{T}_{l}}(\mathcal{Q}_{l}) \rangle}\Big|
\leq \frac{4\epsilon_{0}}{(1-4\epsilon_{0})-k_{1}(1+4\epsilon_{0})}.
\end{align*}
 Thus the spectral norm of $\|\Pj_{\ten{T}_{l}}-\alpha_{l}
\Pj_{\ten{T}_{l}}\RR_{\Omega}\Pj_{\ten{T}_{l}}\|$ can be bounded as

\begin{align*}
\|\Pj_{\ten{T}_{l}}-\alpha_{l}\Pj_{\ten{T}_{l}}P_{\Omega}\Pj_{\ten{T}_{l}}\|
&\leq \|\Pj_{\ten{T}_{l}}-p^{-1}\Pj_{\ten{T}_{l}}P_{\Omega}\Pj_{\ten{T}_{l}}\|+(\alpha_{l}-p^{-1})\|\Pj_{\ten{T}_{l}}P_{\Omega}\Pj_{\ten{T}_{l}}\|\\
& \leq
4\epsilon_{0}+(\alpha_{l}p-1)\|p^{-1}\Pj_{\ten{T}_{l}}P_{\Omega}\Pj_{\ten{T}_{l}}\|\leq
4\epsilon_{0}+4\epsilon_{\alpha}(1+4\epsilon_{0}).
\end{align*} This completes the proof.
\end{proof}

\subsection{Proof of Lemma \ref{le1} }
\begin{lemma}[Bernstein's Inequality \cite{Recht2011}]\label{new6}
Let $X_{1},...,X_{L}\in \R^{n\times n}$ be independent zero mean random matrices of dimension $d_{1}\times d_{2}$. Suppose
\begin{equation*}
\rho_{k}^2=\max\left\{\|E [X_{k}X_{k}^{T}]\|,\|E [X_{k}^{T}X_{k}]\|\right\}
\end{equation*}
and $\|X_{k}\|\leq B$ almost surely for all $k.$ Then for any $\tau>0,$
\begin{equation*}
\mathbb{P}[\|\sum_{k=1}^{L}X_{k}\|>\tau]\leq(d_{1}+d_{2})\exp\left(\frac{-\tau^2/2}{\sum_{k=1}^{L}\rho_{k}^2+B\tau/3}\right).
\end{equation*}
\end{lemma}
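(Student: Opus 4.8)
The plan is to deduce this from the matrix Laplace-transform (Ahlswede--Winter) method, in the sharpened form of Oliveira and Tropp. The first step reduces from rectangular to Hermitian matrices via the Hermitian dilation
\[
\mathcal{H}(X)=\begin{bmatrix} 0 & X \\ X^{T} & 0\end{bmatrix}\in\R^{(d_{1}+d_{2})\times(d_{1}+d_{2})}.
\]
Since $\mathcal{H}$ is linear and $\lambda_{\max}(\mathcal{H}(Y))=\|Y\|$, we have $\|\sum_{k}X_{k}\|=\lambda_{\max}\!\bigl(\sum_{k}\mathcal{H}(X_{k})\bigr)$; moreover $\mathcal{H}(X_{k})^{2}=\diag\bigl(X_{k}X_{k}^{T},\,X_{k}^{T}X_{k}\bigr)$, so $\|\E[\mathcal{H}(X_{k})^{2}]\|=\rho_{k}^{2}$ and $\|\mathcal{H}(X_{k})\|=\|X_{k}\|\le B$ almost surely. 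Hence it suffices to bound $\lambda_{\max}$ of a sum of independent zero-mean Hermitian matrices $Y_{k}$ of size $d:=d_{1}+d_{2}$ with $\|Y_{k}\|\le B$ a.s. and $\bigl\|\sum_{k}\E[Y_{k}^{2}]\bigr\|\le\sigma^{2}:=\sum_{k}\rho_{k}^{2}$ (the last bound by the triangle inequality for the operator norm).

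For the Hermitian sum I would apply Markov's inequality to the matrix exponential: for every $\theta>0$,
\[
\PP\Bigl[\lambda_{\max}\bigl(\sum_{k}Y_{k}\bigr)>\tau\Bigr]\le e^{-\theta\tau}\,\E\,\tr\,\exp\Bigl(\theta\sum_{k}Y_{k}\Bigr).
\]
The crucial step is the subadditivity of the matrix cumulant generating function, a consequence of Lieb's concavity theorem: $\E\,\tr\,\exp\bigl(\theta\sum_{k}Y_{k}\bigr)\le\tr\,\exp\bigl(\sum_{k}\log\E\,e^{\theta Y_{k}}\bigr)$. Each summand is then controlled by lifting the scalar bound $e^{\theta y}\le 1+\theta y+g(\theta)y^{2}$ (valid for $|y|\le B$, with $g(\theta)=(e^{\theta B}-\theta B-1)/B^{2}$) to Hermitian arguments through the spectral mapping theorem; using $\E[Y_{k}]=0$ and $1+t\le e^{t}$ this gives $\E\,e^{\theta Y_{k}}\preceq\exp\bigl(g(\theta)\E[Y_{k}^{2}]\bigr)$, and operator monotonicity of $\log$ yields $\log\E\,e^{\theta Y_{k}}\preceq g(\theta)\E[Y_{k}^{2}]$. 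Summing, using that $A\preceq B$ implies $\tr\,e^{A}\le\tr\,e^{B}$, and bounding $\tr\,e^{g(\theta)\sum_{k}\E[Y_{k}^{2}]}\le d\,e^{g(\theta)\sigma^{2}}$ since $\sum_{k}\E[Y_{k}^{2}]\succeq 0$, we reach $\PP[\lambda_{\max}(\sum_{k}Y_{k})>\tau]\le d\,\exp\bigl(g(\theta)\sigma^{2}-\theta\tau\bigr)$.

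It remains to optimize over $\theta>0$: taking $\theta=\tfrac{1}{B}\log\!\bigl(1+B\tau/\sigma^{2}\bigr)$ and invoking the elementary estimate $g(\theta)\sigma^{2}-\theta\tau\le-\dfrac{\tau^{2}/2}{\sigma^{2}+B\tau/3}$ (the same computation that yields scalar Bernstein) produces the claimed bound, with prefactor $d_{1}+d_{2}$ and $\sum_{k}\rho_{k}^{2}$ in the denominator once the dilation is undone. I expect the main obstacle to be the matrix-analytic machinery, namely Lieb's concavity theorem behind the subadditivity step and the operator-monotone transfer of the scalar mgf bound; by contrast the dilation reduction and the concluding one-variable optimization are routine. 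Alternatively, since this is exactly \cite[Theorem~1.4]{Recht2011}, one may simply cite that reference.
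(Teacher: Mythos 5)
Your proposal is correct. Note, however, that the paper does not prove this lemma at all: it is quoted verbatim as a known result with a pointer to \cite{Recht2011}, so there is no in-paper argument to compare against. What you have written is a complete and accurate sketch of the modern proof of the rectangular matrix Bernstein inequality: the Hermitian dilation correctly converts $\|\sum_k X_k\|$ into $\lambda_{\max}$ of a Hermitian sum with the right variance proxy ($\|\E[\mathcal{H}(X_k)^2]\|=\rho_k^2$ and $\|\mathcal{H}(X_k)\|=\|X_k\|$), the Laplace-transform/Lieb subadditivity step and the operator lift of the scalar bound $e^{\theta y}\le 1+\theta y+g(\theta)y^2$ are the standard Tropp machinery, and the final optimization recovers the Bernstein tail. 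One small remark on provenance: Recht's original derivation uses the Ahlswede--Winter/Golden--Thompson route, whose natural variance parameter is exactly the $\sum_k\rho_k^2$ appearing in the statement, whereas your Lieb-based argument yields the sharper parameter $\bigl\|\sum_k\E[\mathcal{H}(X_k)^2]\bigr\|$, which you then correctly relax to $\sum_k\rho_k^2$ by the triangle inequality; so your route proves a slightly stronger statement and specializes to the one quoted. Either the citation or your full argument suffices for the paper's purposes.
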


\begin{proof}[Proof of Lemma \ref{le1}]
We begin the proof with the following decomposition:
\begin{equation}
\Pj_\ten{T}(\mathcal{Z}) =  \sum_{abc} \<\Pj_\ten{T}(\mathcal{Z}), \Eabc\> \Eabc =
\sum_{abc} \<\mathcal{Z}, \Pj_\ten{T}(\Eabc)\> \Eabc. \nonumber
\end{equation}
It follows that
\begin{equation}
\PT\PO\PT(\mathcal{Z})  = \sum_{k=1}^{m}
\<\mathcal{Z}, \PT(\Eabck)\>\PT(\Eabck), \nonumber
\end{equation}
where $(a_{k},b_{k},c_{k})$ are indices sampled from $\{1,...,n_{1} \}\times\{1,...,n_{2} \}\times\{1,...,n_{3} \}$ independently and
uniform with replacement.

Let $\TT_{a_kb_kc_k}$ be the linear operator which maps $\mathcal{Z}$ to
$\<\mathcal{Z}, \PT(\Eabck)\>\PT(\Eabck)$. It is not hard to see that $\TT_{a_kb_kc_k}$ is rank-1 linear operator with
$$\|\TT_{a_kb_kc_k}\| =\|\PT(\Eabck)\|^2_F\leq  \frac{2\mu r}{n_{(2)}},$$
where the inequality follows from Lemma \ref{lem6.3}. Since $\|\PT\| \leq 1$, it follows that
\begin{equation}
\left\|\TT_{a_{k}b_{k}c_{k}} -
\frac{1}{n_1n_2n_3}\PT\right\| \leq
\max\left\{\|\PT(\Eabck)\|^2_F,
\frac{1}{n_1n_2n_3}\right\} \leq  \frac{2\mu r}{n_{(2)}},
\nonumber
\end{equation}
where the first inequality uses the fact that if $\mtx{A}$ and
$\mtx{B}$ are positive semidefinite matrices, then $\|\mtx{A} -
\mtx{B}\| \leq \max\{\|\mtx{A}\|, \|\mtx{B}\|\}$. In addition, we have
\begin{align}
&\Big\|\E\Big[\Big(\TT_{a_{k}b_{k}c_{k}} - \frac{1}{n_1 n_2 n_3}\PT\Big)^2\Big]\Big\|\nonumber \\
=~&  \left\|\E\left[\|\PT(\Eabck)\|^2_F{\TT_{a_{k}b_{k}c_{k}}}\right] - \frac{2}{n_1 n_2 n_3}{\PT}\E({\TT_{a_{k}b_{k}c_{k}}}) + \frac{1}{n^2_1n^2_2n^2_3}{\PT}\right\| \nonumber\\
=~&   \left\|\E\left[\|\PT(\Eabck)\|^2_F{\TT_{a_{k}b_{k}c_{k}}}\right] - \frac{1}{n^2_1 n^2_2 n^2_3}{\PT}\right\| \nonumber\\
\leq~&\max\left\{\frac{2\mu r}{n_{(2)}}\left\|\E[\TT_{a_{k}b_{k}c_{k}}]\right\|,\frac{1}{n^2_1 n^2_2 n^2_3}\right\}\\
\leq~&    \frac{2\mu r}{n_{(1)}n^2_{(2)}n_3}. \nonumber
\end{align}

Noticing that $\Pj_{\ten{T}}\RR_{\Omega}\Pj_{\ten{T}}=\sum_{k=1}^m\TT_{a_{k}b_{k}c_{k}}$, taking $\tau
=\frac{m}{n_1n_2n_3}\sqrt{\frac{c_1\mu r
n_{(1)}n_{3}\beta \log(n_{(1)}n_{3})}{m}}$ in Lemma~\ref{new6} yields the result.
\end{proof}


\subsection{Proof of Lemma \ref{lemma2} }
\begin{proof}[Proof of Lemma \ref{lemma2}]
First note that
\begin{equation}
\frac{n_{1}n_{2}n_{3}}{m}\PO(\mathcal{Z}) - \mathcal{Z} = \frac{1}{m}\sum_{k=1}^{m}n_{1}n_{2}n_{3}\mathcal{Z}_{a_{k}b_{k}c_{k}}\Eabck-\mathcal{Z}.
\nonumber
\end{equation}
By Definition \ref{def5},
\begin{align}
&\left\| \frac{n_{1}n_{2}n_{3}}{m}\PO(\mathcal{Z}) - \mathcal{Z}\right\|\nonumber\\
=~& \left\|\overline{\frac{n_{1}n_{2}n_{3}}{m}\PO(\mathcal{Z}) - \mathcal{Z}}\right\|\nonumber\\
=~& \left\|
 \frac{1}{m}\sum_{k=1}^{m}n_{1}n_{2}n_{3}\mathcal{Z}_{a_{k}b_{k}c_{k}}\overline{\Eabck}-\overline{\mathcal{Z}}\right\|.\label{keq01}
\end{align}
Let
\begin{equation}
\overline{\mathcal{C}_{a_{k}b_{k}c_{k}}} = n_{1}n_{2}n_{3}\mathcal{Z}_{a_{k}b_{k}c_{k}}\overline{\Eabck}-\overline{\mathcal{Z}}.\nonumber
\end{equation}
It is easy to see that
$\E[\overline{\mathcal{C}_{a_{k}b_{k}c_{k}}}]= 0$.
By the definition of $\Eabck$ and the fact DCT is a unitary transform, a simple calculation yields that
\begin{align*}
\|\overline{\Eabck}\|\leq 1\quad\mbox{and}\quad \|\overline{\mathcal{Z}}\|\leq\sqrt{n_1n_2n_3}\|\mathcal{Z}\|_\infty.
\end{align*}
Hence, $\|\overline{\mathcal{C}_{a_{k}b_{k}c_{k}}}\|\leq\frac{3n_1n_2n_3}{2}\|\mathcal{Z}\|_\infty$ for $n_3\geq 2$.

To bound $\E[\overline{\mathcal{C}_{a_{k}b_{k}c_{k}}}^T\overline{\mathcal{C}_{a_{k}b_{k}c_{k}}}]$, we need to first bound $\sum_{abc}\mathcal{Z}_{abc}^2\overline{\Eabc}^T\overline{\Eabc}$. To this end, let $[\mu_1^{abc},\cdots,\mu_{n_3}^{abc}]^T$ be the DFT of the $(a,b)$-th tube of $\Eabc$. Then,
\begin{align*}
\overline{\Eabc}^T\overline{\Eabc} =
\begin{bmatrix}
(\mu_1^{abc})^2\be_b\be_b^T\\
&\ddots\\
&&(\mu_{n_3}^{abc})^2\be_b\be_b^T
\end{bmatrix}.
\end{align*}
Thus we have
\begin{align*}
\sum_{abc}\mathcal{Z}_{abc}^2\overline{\Eabc}^T\overline{\Eabc}=
\sum_b\begin{bmatrix}
\sum_{ac}\Zabc^2(\mu_1^{abc})^2\be_b\be_b^T\\
&\ddots\\
&&\sum_{ac}\Zabc^2(\mu_{n_3}^{abc})^2\be_b\be_b^T
\end{bmatrix},
\end{align*}
which implies
\begin{align*}
\|\sum_{abc}\mathcal{Z}_{abc}^2\overline{\Eabc}^T\overline{\Eabc}\|\leq &
\max_{b,i}\sum_{ac}\Zabc^2(\mu_{i}^{abc})^2\leq\|\mathcal{Z}\|_\infty^2\sum_{ac}(\mu_{i}^{abc})^2 \leq n_1\|\mathcal{Z}\|_\infty^2,
\end{align*}
where the last equality follows from
$
\sum_{c}(\mu_{i}^{abc})^2=1
$
due to that DCT is a unitary transform. It follows that
\begin{align*}
\|\E[\overline{\mathcal{C}_{a_{k}b_{k}c_{k}}}^T\overline{\mathcal{C}_{a_{k}b_{k}c_{k}}}]\| =&\|n_1^2n_2^2n_3^2\E[\Zabck^2\overline{\Eabc}^T\overline{\Eabc}]-\Z^T\Z\|\\
\leq & \max\lcb n_1^2n_2^2n_3^2\|\E[\Zabck^2\overline{\Eabc}^T\overline{\Eabc}]\|,\|\Z^T\Z\|\rcb\\
\leq & n_{(1)}^2n_{(2)}n_3\|\Z\|_\infty^2.
\end{align*}
Moreover, $\|\E[\overline{\mathcal{C}_{a_{k}b_{k}c_{k}}}^T\overline{\mathcal{C}_{a_{k}b_{k}c_{k}}}]\|$ can be bounded similarly.

Applying the Bernstein's inequality to \eqref{keq01} concludes the proof.
\end{proof}

\subsection{Proof of Lemma \ref{lemm5}}
\begin{proof}[Proof of Lemma \ref{lemm5}]
Denote $\mathcal{W}_{0}=\mathrm{H}_{\bm{r}}(p^{-1}R_{\Omega}(\mathcal{X}))$. Lemma \ref{lemma2} implies that
\begin{align*}
\|\mathcal{X}_{0}-\mathcal{X}\|=
\|\mathcal{X}_{0}-\mathcal{W}_{0}+\mathcal{W}_{0}-\mathcal{X}\|\leq
2 \|\mathcal{W}_{0}-\mathcal{X}\|\lesssim \sqrt{\frac{\beta n_{(1)}^2n_{(2)}n_3
\log(n_{(1)}n_3)}{m}}\|\mathcal{X}\|_{\infty}
\end{align*}
holds with high probability.
Consequently,
\begin{align*}
\|\mathcal{X}_{0}-\mathcal{X}\|_{F}=\|\overline{\mathcal{X}_{0}}-\overline{\mathcal{X}}\|_{F}\leq
\sqrt{n_{3}r}\|\mathcal{X}_{0}-\mathcal{X}\|\lesssim \sqrt{\frac{\beta rn_{(1)}^2n_{(2)}n_3^2
\log(n_{(1)}n_3)}{m}}\|\mathcal{X}\|_{\infty}.
\end{align*}
Therefore, plugging the joint incoherence condition
$\|\mathcal{X}\|_{\infty}\leq
\mu_{1}\sqrt{\frac{r}{n_{1}n_{2}n_{3}}}\|\mathcal{X}\|$  into the above inequality yields that
\begin{align*}
\|\mathcal{X}_{0}-\mathcal{X}\|_{F}&\lesssim\sqrt{\frac{\mu_{1}^2r^2\beta
n_{(1)}n_3\log(n_{(1)}n_3)}{m}}\|\mathcal{X}\|,
\end{align*}   which completes the proof.
\end{proof}

\subsection{Proof of Lemma \ref{lemm4}}
\begin{proof}[Proof of Lemma \ref{lemm4}]
Noting that
\begin{equation}
(\Pj_{\mathcal{U}}-\Pj_{\mathcal{U}_l})(\mathcal{Z}) =  \sum_{abc} \<(\Pj_{\mathcal{U}}-\Pj_{\mathcal{U}_l})(\mathcal{Z}), \Eabc\> \Eabc =
\sum_{abc} \<\mathcal{Z}, (\Pj_{\mathcal{U}}-\Pj_{\mathcal{U}_l})(\Eabc)\> \Eabc, \nonumber
\end{equation}
we have
\begin{align*}
\Pj_{\ten{T}_{l}}\RR_{\Omega}(\Pj_{\mathcal{U}}-\Pj_{\mathcal{U}_l})(\mathcal{Z}) =\sum_{k=1}^{m}\<\mathcal{Z}, (\Pj_{\mathcal{U}}-\Pj_{\mathcal{U}_l})(\Eabck)\> \Pj_{\ten{T}_l}(\Eabck).
\end{align*}
Let $\TT_{a_kb_kc_k}$ be the linear operator which maps $\mathcal{Z}$ to
$\<\mathcal{Z}, (\Pj_{\mathcal{U}}-\Pj_{\mathcal{U}_l})(\Eabck)\> \Pj_{\ten{T}_l}(\Eabck)$.
It follows that
\begin{align*}
\|\TT_{a_kb_kc_k}\|\leq \|\Pj_{\ten{T}_l}(\Eabck)\|_F(\|\Pj_{\mathcal{U}}(\Eabck)\|_F+\|\Pj_{\mathcal{U}_l}(\Eabck)\|_F)\leq \frac{4\mu r}{n_{(2)}}.
\end{align*}
Moreover, there holds
\begin{align*}
&\|\E[(\TT_{a_kb_kc_k}-\E(\TT_{a_kb_kc_k}))^{T}(\TT_{a_kb_kc_k}-\E(\TT_{a_kb_kc_k}))]\|
=\|\E[(\TT^{T}_{a_kb_kc_k}\TT_{a_kb_kc_k})]-\E[\TT^{T}_{a_kb_kc_k}]\E[{\TT}_{a_kb_kc_k}]\|\\
\leq
&~\|\E[(\TT^{T}_{a_kb_kc_k}\TT_{a_kb_kc_k})]\|+\|\frac{1}{n_{1}^{2}n_{2}^{2}n_{3}^{2}}(\Pj_{\mathcal{U}}-\Pj_{\mathcal{U}_{l}})\Pj_{T_{l}}(\Pj_{\mathcal{U}}-\Pj_{\mathcal{U}_{l}})\|
\lesssim \frac{\mu r}{n_{(1)}n^{2}_{(2)}n_{3}}.
\end{align*}
Similarly, we have
\begin{align*}
\|\E[(\TT_{a_kb_kc_k}-\E(\TT_{a_kb_kc_k}))(\TT_{a_kb_kc_k}-\E(\TT_{a_kb_kc_k})^T)]\|\lesssim \frac{\mu r}{n_{(1)}n^{2}_{(2)}n_{3}}.
\end{align*}
Noting that $\Pj_{\ten{T}_{l}}\RR_{\Omega}(\Pj_{\mathcal{U}}-\Pj_{\mathcal{U}_l})=\sum_{k=1}^m\TT_{a_kb_kc_k}$, the lemma follows immediately from the Bernstein's inequality.
\end{proof}

\subsection{Proof of Lemma \ref{lemm2}}
\begin{proof}[Proof of Lemma \ref{lemm2}]
Similar to the proof for the matrix case  in \cite{wei2}, we have
\begin{align*}
&\|\mathcal{U}_{l}-\mathcal{U}\ast\mathcal{Q}\|_{F}^{2}=\langle\mathcal{U}_{l}-\mathcal{U}\ast\mathcal{Q},\mathcal{U}_{l}-\mathcal{U}\ast\mathcal{Q}\rangle=2r-2\langle\mathcal{U}_{l},\mathcal{U}\ast\mathcal{Q}\rangle,\\
&\|\mathcal{U}_{l}\ast\mathcal{U}^{T}_{l}-\mathcal{U}\ast\mathcal{U}^{T}\|^{2}_{F}=2r-2\langle\mathcal{U}_{l}\ast\mathcal{U}^{T}_{l},\mathcal{U}\ast\mathcal{U}^{T}\rangle.
\end{align*}
Then it suffices to show there exists an orthogonal tensor
$\mathcal{Q}\in\R^{r \times r \times n_3} $ such that
\begin{equation*}
\langle\mathcal{U}_{l},\mathcal{U}\ast\mathcal{Q}\rangle \geq
\langle\mathcal{U}_{l}\ast\mathcal{U}^{T}_{l},\mathcal{U}\ast\mathcal{U}^{T}\rangle,
\end{equation*}
which is equivalent to
$\langle\mathcal{U}\ast\mathcal{U}_{l},\mathcal{Q}\rangle \geq
\langle\mathcal{U}^{T}\ast\mathcal{U}_{l},\mathcal{U}^{T}\ast\mathcal{U}_{l}\rangle.$
Noting that
 $\mathcal{U}^{T}\ast\mathcal{U}_{l}=\mathcal{Q}_{1}\ast\Sigma
\ast\mathcal{Q}_{2}^T,$
we can choose
$\mathcal{Q}=\mathcal{Q}_{1}\ast\mathcal{Q}^T_{2},$  which completes the proof.
\end{proof}

\subsection{Proof of Lemma \ref{lemm3}}
\begin{proof}[Proof of Lemma \ref{lemm3}]
Let $d=\|\mathcal{X}_{l}-\mathcal{X}\|_{F}$. By Lemma \ref{lem5} we
have
\begin{align*}
\|\mathcal{U}_{l}\ast \mathcal{U}_{l}^{T}-\mathcal{U}\ast
\mathcal{U}^{T}\|_{F}\leq
\frac{\sqrt{2}d}{\sigma_{\min}(\mathcal{X})}
~~\text{and}~~\|\mathcal{V}_{l}\ast
\mathcal{V}_{l}^{T}-\mathcal{V}\ast \mathcal{V}^{T}\|_{F}\leq
\frac{\sqrt{2}d}{\sigma_{\min}(\mathcal{X})}.
\end{align*}
Moreover, by Lemma \ref{lemm2} there exists two unitary tensors
$\mathcal{Q}_{u}$ and $\mathcal{Q}_{v}$ such that
\begin{align*}
\|\mathcal{U}_{l}-\mathcal{U}\ast \mathcal{Q}_{u}\|_{F}\leq
\frac{\sqrt{2}d}{\sigma_{\min}(\mathcal{X})},~~\|\mathcal{V}_{l}-\mathcal{V}\ast
\mathcal{Q}_{v}\|_{F}\leq
\frac{\sqrt{2}d}{\sigma_{\min}(\mathcal{X})}.
\end{align*}
Noting that $\|\mathcal{X}_{l}-\mathcal{X}\|_{F}\leq
\frac{\sigma_{\min}(\mathcal{X})}{10\sqrt{2}}\leq
\frac{\sigma_{\max}(\mathcal{X})}{10\sqrt{2}}$ and
$\|\mathcal{X}_{l}\|\leq
\|\mathcal{X}+\mathcal{Z}_{l}-\mathcal{X}\|\leq
\sigma_{\max}(\mathcal{X})+d,$  we have
\begin{align*}
 &\|\mathcal{S}_{l}-\mathcal{Q}_{u}^{T}\ast \mathcal{S} \ast
\mathcal{Q}_{v}\|_{F}=\|\mathcal{U}_{l}^{T}\ast \mathcal{X}_{l}\ast
\mathcal{V}_{l}-(\mathcal{U}\ast \mathcal{Q}_{u})^{T}\ast
\mathcal{X}\ast(\mathcal{V}\ast \mathcal{Q}_{v})\|_{F}\\
\leq~&\|\mathcal{U}_{l}^{T}\ast \mathcal{X}_{l}\ast
\mathcal{V}_{l}-(\mathcal{U}\ast \mathcal{Q}_{u})^{T}\ast
\mathcal{X}_{l}\ast\mathcal{V}_{l}\|_{F}+\|(\mathcal{U}\ast
\mathcal{Q}_{u})^{T}\ast
\mathcal{X}_{l}\ast\mathcal{V}_{l}-(\mathcal{U}\ast
\mathcal{Q}_{u})^{T}\ast \mathcal{X}_{l}\ast \mathcal{V}_{l}\|_{F}\\
&+\|(\mathcal{U}\ast \mathcal{Q}_{u})^{T}\ast \mathcal{X}_{l}\ast
\mathcal{V}_{l}-(\mathcal{U}\ast \mathcal{Q}_{u})^{T}\ast
\mathcal{X}\ast(\mathcal{V}\ast \mathcal{Q}_{v})\|_{F}\\
\leq~&\|\mathcal{U}_{l}-\mathcal{U}\ast \mathcal{Q}_{u}\|_{F}\|\mathcal{X}_{l}\|+\|\mathcal{X}_{l}-\mathcal{X}\|_{F}+\|\mathcal{X}\|\|\mathcal{V}-\mathcal{V}\ast\mathcal{Q}_{v}\|_{F}\\
 \leq~& 4\kappa d,
\end{align*}
where $\kappa$ is the condition number of $\mathcal{X}.$  Recall
that $\mathcal{A}_{l}$ and $\mathcal{B}_{l}$ are defined as
\begin{align*}
\mathcal{A}_{l}^{[i]}=\frac{\mathcal{U}_{l}^{[i]}}{\|\mathcal{U}_{l}^{[i]}\|_{F}}\min\left(\|\mathcal{U}_{l}^{[i]}\|_{F},\sqrt{\frac{\mu_{0}r}{n_{1}}}\right),~~
\mathcal{B}_{l}^{[i]}=\frac{\mathcal{V}_{l}^{[i]}}{\|\mathcal{V}_{l}^{[i]}\|_{F}}\min\left(\|\mathcal{V}_{l}^{[i]}\|_{F},\sqrt{\frac{\mu_{0}r}{n_{2}}}\right).
\end{align*}
Together with
\begin{align*}
\|(\mathcal{U}\ast \mathcal{Q}_{u})^{[i]}\|_{F}\leq
\sqrt{\frac{\mu_{0}r}{n_{1}}},~\|(\mathcal{V}\ast
\mathcal{Q}_{v})^{[i]}\|_{F}\leq \sqrt{\frac{\mu_{0}r}{n_{2}}},
\end{align*}
we have
\begin{align*}
\|\mathcal{A}_{l}^{[i]}-(\mathcal{U}\ast
\mathcal{Q}_{u})^{[i]}\|_{F}\leq\|\mathcal{U}_{l}^{[i]}-(\mathcal{U}\ast
\mathcal{Q}_{u})^{[i]}\|_{F},~~\|\mathcal{B}_{l}^{[i]}-(\mathcal{V}\ast
\mathcal{Q}_{v})^{[i]}\|_{F}\leq\|\mathcal{V}_{l}^{[i]}-(\mathcal{V}\ast
\mathcal{Q}_{v})^{[i]}\|_{F}.
\end{align*}
It follows that
\begin{align*}
&\|\mathcal{A}_{l}-\mathcal{U}\ast\mathcal{Q}_{u}\|_{F}\leq
\|\mathcal{U}_{l}-\mathcal{U}\ast\mathcal{Q}_{u}\|_{F}\leq
\frac{\sqrt{2}d}{\sigma_{\min}(\mathcal{X})},~
\|\mathcal{B}_{l}-\mathcal{U}\ast\mathcal{Q}_{v}\|_{F}\leq
\|\mathcal{V}_{l}-\mathcal{U}\ast\mathcal{Q}_{v}\|_{F}\leq
\frac{\sqrt{2}d}{\sigma_{\min}(\mathcal{X})}.
\end{align*}
Thus together with $\hat{\mathcal{X}}=\mathcal{A}_{l}\ast \mathcal{S}_{l} \ast
\mathcal{B}^{\ast}_{l},$ we have
\begin{align*}
\|\hat{\mathcal{X}}-\mathcal{X}\|_{F}=&
\|\mathcal{A}_{l}\ast
\mathcal{S}_{l}\ast \mathcal{B}^{T}_{l}-(\mathcal{U}\ast
\mathcal{Q}_{u})\ast(\mathcal{Q}_{u}^{T}\ast \mathcal{S} \ast
\mathcal{Q}_{v})\ast (\mathcal{V}\ast \mathcal{Q}_{v})^{T}\|_{F}\\
\leq &~\|\mathcal{A}_{l}^{T}\ast \mathcal{S}_{l}\ast
\mathcal{B}^{T}_{l}-(\mathcal{U}\ast \mathcal{Q}_{u})\ast
\mathcal{S}_{l}\ast\mathcal{B}^{T}_{l}\|_{F}+\|(\mathcal{U}\ast
\mathcal{Q}_{u})\ast
\mathcal{S}_{l}\ast\mathcal{B}^{T}_{l}-(\mathcal{U}\ast
\mathcal{Q}_{u})\mathcal{Q}^{T}_{u}\ast \mathcal{S}\ast\mathcal{Q}_{v} \ast\mathcal{B}^{T}_{l}\|_{F}\\
&~+\|(\mathcal{U}\ast \mathcal{Q}_{u})\ast\mathcal{Q}^{T}_{u}\ast
\mathcal{S}\ast \mathcal{Q}_{v}\ast\mathcal{B}_{l}-(\mathcal{U}\ast
\mathcal{Q}_{u})\ast\mathcal{Q}^{T}_{u}\ast
\mathcal{S} \ast \mathcal{Q}_{v}\ast(\mathcal{V}\ast \mathcal{Q}_{v})\|_{F}\\
\leq &~
\|\mathcal{A}_{l}-\mathcal{U}\ast\mathcal{Q}_{u}\|_{F}\|\mathcal{S}_{l}\|\|\mathcal{B}_{l}\|+
\|\mathcal{S}_{l}-\mathcal{Q}^{T}_{u}\ast
\mathcal{S} \ast \mathcal{Q}_{v}\|_{F}\|\mathcal{B}_{l}\|+\|\mathcal{S}\|\|\mathcal{B}_{l}-\mathcal{V}\ast \mathcal{Q}_{v}\|_{F}\\
\leq &~ 8\kappa d.
\end{align*}
We also need to estimate the incoherence of $\hat{\mathcal{Z}}_{l}.$
Since $\mathcal{A}_{l}$ and $\mathcal{B}_{l}$ are not necessarily
unitary, we consider their QR factorizations:
\begin{equation*}
\mathcal{A}_{l}=\widetilde{\mathcal{U}}_{l}\ast \mathcal{R}_{u},~~
\mathcal{B}_{l}=\widetilde{\mathcal{V}}_{l}\ast \mathcal{R}_{v}.
\end{equation*}\
Noting that
\begin{align*}
&\sigma_{\min}(\mathcal{A}_{l})=\min(\sigma(\overline{\mathcal{A}}_{l}))\geq
1-\|\mathcal{A}_{l}-\mathcal{U}\ast\mathcal{Q}_{u}\|\geq
1-\frac{\sqrt{2}d}{\sigma_{\min}(\mathcal{X})}\geq\frac{9}{10},\\
&\sigma_{\min}(\mathcal{B}_{l})=\min(\sigma(\overline{\mathcal{B}}_{l}))\geq
1-\|\mathcal{B}_{l}-\mathcal{V}\ast\mathcal{Q}_{v}\|\geq
1-\frac{\sqrt{2}d}{\sigma_{\min}(\mathcal{X})}\geq\frac{9}{10}\
\end{align*}
by the assumption $d\leq \sigma_{\min}(\mathcal{X})/10\sqrt{2}$ and
the Weyl inequality, we have
$\|\mathcal{R}_{u}^{-1}\|\leq\frac{10}{9}$ and
$\|\mathcal{R}_{v}^{-1}\|\leq\frac{10}{9}.$ Consequently,
\begin{align*}
\|\hat{\mathcal{U}}_{l}\|_{F}=\|\widetilde{\mathcal{U}}_{l}\|_{F}=\|
\mathcal{A}_{l}\ast \mathcal{R}^{-1}_{u}\|_{F}\leq
\frac{10}{9}\sqrt{\frac{\mu_{0}r}{n_{1}}}~~\text{and}~~
\|\hat{\mathcal{V}}_{l}\|_{F}=\|\widetilde{\mathcal{V}}_{l}\|_{F}=\|
\mathcal{B}_{l}\ast \mathcal{R}^{-1}_{u}\|_{F}\leq
\frac{10}{9}\sqrt{\frac{\mu_{0}r}{n_{2}}}.
\end{align*}  This completes the proof.
\end{proof}
\noindent

\section*{Acknowledgement}
The authors would like to thank Dr. Ke Wei for his careful reading on the manuscript and
his comments and suggestions for improving the presentation of the manuscript.

\end{document}